\newtheorem{theorem}{Theorem}[section]
\newtheorem{lemma}[theorem]{Lemma}
\newtheorem{proposition}[theorem]{Proposition}
\newtheorem{conjecture}[theorem]{Conjecture}
\newtheorem{corollary}[theorem]{Corollary}
\theoremstyle{definition}
\newtheorem{definition}[theorem]{Definition}
\newtheorem{example}[theorem]{Example}
\newtheorem{remark}[theorem]{Remark}
\newtheorem{question}[theorem]{Question}
\begin{document}

\title{Gorenstein polytopes and their stringy $E$-functions}

\author{Benjamin Nill and Jan Schepers\thanks{Postdoctoral Fellow of the Research Foundation - Flanders (FWO).}}
\date{}

\maketitle

%\vspace{-8mm}

%\normalsize
%\begin{center} K.U.Leuven, Department of Mathematics\\ Celestijnenlaan 200B, B-3001 Leuven, Belgium\\ Email: \texttt{janschepers1@gmail.com}\\ Phone: %+32-16327021\\ Fax: +32-16327998
%\end{center}

\begin{abstract}
\noindent Inspired by ideas from algebraic geometry, Batyrev and the first named author have introduced the stringy $E$-function of a Gorenstein polytope. We prove that this a priori rational function is actually a polynomial, which is part of a conjecture of Batyrev and the first named author. The proof relies on a comparison result for the lattice point structure of a Gorenstein polytope $P$, a face $F$ of $P$ and the face of the dual Gorenstein polytope corresponding to $F$. In addition, we study joins of Gorenstein polytopes and introduce the notion of an irreducible Gorenstein polytope. We show how these concepts relate to the decomposition of nef-partitions.
\end{abstract}

%\vspace{5mm}

%\noindent \textit{Keywords:} Gorenstein and reflexive polytope, Gorenstein cone, stringy invariants, mirror symmetry.\\

%\noindent \textit{MSC2010 Subject Classification:} 52B20, 14J33, 14M25, 06A11.

\section{Introduction}

In this paper we investigate the stringy $E$-function of a Gorenstein polytope, as defined by Batyrev and the first named author in \cite{BatyrevNill}. We start by sketching the geometric ideas behind its combinatorial definition. 

In \cite{Batyrev1} Batyrev introduced \textit{reflexive polytopes} together with the notion of duality between them (see Definition~\ref{reflexive}). He conjectured that nondegenerate Calabi-Yau hypersurfaces $X_{\Delta}$ and $X_{\Delta^{\times}}$ in the projective toric varieties associated to dual reflexive polytopes $\Delta$, respectively $\Delta^{\times}$, should have mirror dual stringy Hodge numbers, in the sense that
\begin{equation*}\label{mirrorsymm}
h_{st}^{p,q}(X_{\Delta})  =  h_{st}^{n-p,q}(X_{\Delta^{\times}}) \quad \text{ for } p,q \in \{0,\ldots, n\} , \tag{*}
\end{equation*}
where $n$ is the dimension of $X_{\Delta}$ and $X_{\Delta^{\times}}$. There is a need for \textit{stringy Hodge numbers} here (as defined in generality in \cite{Batyrev}), since the involved Calabi-Yau varieties may be singular. 

This idea was conjecturally generalised by Borisov \cite{Borisov} to Calabi-Yau complete intersections defined by a \textit{nef-partition} of a reflexive polytope $\Delta$ (see Definition~\ref{nefpartition}). Together, Batyrev and Borisov could actually settle this version of mirror symmetry \cite[Thm.\ 4.15]{BatyrevBorisov}. The main difficulty in their proof was to find an explicit formula for the \textit{stringy $E$-function} of a generic Calabi-Yau complete intersection $Y$.
The stringy $E$-function is the generating function of the stringy Hodge numbers of $Y$:    
\[ E_{st}(Y;u,v) := \sum_{p,q} (-1)^{p+q}\, h_{st}^{p,q}(Y)\, u^p v^q \quad \in \mathbb{Z}[u,v]. \]

A nef-partition gives a Minkowski sum decomposition $\Delta = \Delta_1 + \cdots + \Delta_r$. Batyrev and Borisov consider the \textit{Cayley cone} $\sigma$ associated to $\Delta_1,\ldots, \Delta_r$ (see Definition~\ref{Cayley}). If $\dim \Delta =e$ then $\sigma$ is a \textit{Gorenstein cone} of dimension $e+r$ with a \textit{Gorenstein polytope $\tilde{\Delta}$ of index $r$} as support polytope (see Definitions~\ref{GorensteinPolytope} and~\ref{GorensteinCone}). The results of Batyrev and Borisov are summarised by the following formula which can be deduced from \cite[Thm.\ 4.14]{BatyrevBorisov} as in \cite[Thm.\ 7.2]{BorisovMavlyutov}:
\[ E_{st}(Y;u,v) = \frac{1}{(uv)^r} \, \sum_{\emptyset \leq F \leq \tilde{\Delta}} (-u)^{\dim F + 1} \, \widetilde{S}(F,u^{-1}v) \, \widetilde{S}(F^{*},uv), \]
where we sum over all faces $F$ of the Gorenstein polytope $\tilde{\Delta}$, where $F^{*}$ denotes the face of the dual Gorenstein polytope corresponding to $F$, and where the polynomial $\widetilde{S}(P,t) \in \mathbb{Z}[t]$ is an invariant of a lattice polytope $P$ combining information about the lattice points in multiples of $P$ and information about the face poset of $P$ (see Definition~\ref{Stilde}).

The idea of Batyrev and the first named author was to use this last formula as definition for the stringy $E$-function of an arbitrary Gorenstein polytope (see Definition~\ref{stringyE}). In the above setting, the stringy $E$-function is a polynomial in $u,v$ of degree $2(e-r) = 2(\dim\tilde{\Delta} +1 -2r)$, but this is a priori not guaranteed for arbitrary Gorenstein polytopes. It is the content of the following conjecture \cite[Conj.\ 4.10]{BatyrevNill}:

\begin{conjecture}\label{conjecture}
Let $P$ be a Gorenstein polytope of index $r$ and dimension $d$. Then $E_{st}(P;u,v)$ is a polynomial of degree $2(d+1 - 2r)$ or it is $0$ (in particular, it is $0$ if $d+1 < 2r$).  
\end{conjecture}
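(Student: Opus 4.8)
The plan is to analyze the defining sum
\[
E_{st}(P;u,v) = \frac{1}{(uv)^r} \, \sum_{\emptyset \leq F \leq P} (-u)^{\dim F + 1} \, \widetilde{S}(F,u^{-1}v) \, \widetilde{S}(F^{*},uv)
\]
by exploiting the duality between a face $F$ of $P$ and the corresponding face $F^*$ of the dual Gorenstein polytope $P^*$. The key structural input must be a comparison result (alluded to in the abstract) relating the polynomial $\widetilde{S}(F,t)$, the polynomial $\widetilde{S}(F^*,t)$, and the relevant indices and dimensions: if $P$ has index $r$ and dimension $d$, and $F$ is a face of $P$ with $\dim F = k$, then $F^*$ is a face of $P^*$ with $\dim F^* = d - k - 1$, and (this is the crucial claim) the ``Gorenstein-type'' duality forces a functional equation of the form $\widetilde{S}(F^*,t) = t^{?}\,\widetilde{S}(F^*,t^{-1})$ together with a relation between the degrees of $\widetilde{S}(F,\cdot)$ and $\widetilde{S}(F^*,\cdot)$ in terms of $k$, $d$, and $r$. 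First I would establish this comparison of lattice-point structures for $F$, $P$, and $F^*$ — this is where the Gorenstein condition on $P$ does all the work, and I expect it to be the main obstacle, since a face of a Gorenstein polytope need not itself be Gorenstein, so one cannot simply quote a symmetry for $\widetilde{S}(F,\cdot)$ in isolation; the symmetry is shared between the pair $(F,F^*)$.

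Granting such a comparison result, the proof of polynomiality proceeds as follows. I would substitute the functional equations into the sum and check that the prefactor $1/(uv)^r$ is cancelled: each term $(-u)^{\dim F+1}\widetilde{S}(F,u^{-1}v)\widetilde{S}(F^*,uv)$ should, after using the degree relation between $\widetilde{S}(F,\cdot)$ and $\widetilde{S}(F^*,\cdot)$, be divisible by $(uv)^r$ as a Laurent polynomial — i.e. it has no monomials $u^av^b$ with $\min(a,b) < r$. The natural way to see this is to pair the contribution of $F$ with that of $F^*$ (note $(F^*)^* = F$, so faces come in dual pairs, and $\dim F + \dim F^* = d-1$) and use the substitution $u \leftrightarrow v$ or $u \to u^{-1}$ combined with the functional equation to show the two contributions are exchanged by a symmetry that manifests the absence of negative powers after dividing by $(uv)^r$.

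For the degree statement, I would argue that the top-degree term $2(d+1-2r)$ in $u,v$ comes from the face $F = P$ itself (where $\dim F + 1 = d+1$ and $F^* = \emptyset$, so $\widetilde{S}(F^*,uv)$ contributes the constant from the empty face, presumably $1$ up to sign, via the convention $\dim\emptyset = -1$), and that all other faces contribute terms of strictly lower degree because $\dim F < d$ and the degree of $\widetilde{S}(F^*,\cdot)$ is controlled by $\dim F^* = d - \dim F - 1$. One then checks that the leading coefficient does not vanish (unless $d+1 < 2r$, in which case the prefactor $(uv)^{-r}$ cannot be absorbed and every term fails to produce a polynomial contribution in the naive sense, forcing $E_{st}(P;u,v) = 0$ — this degenerate case should be handled separately, perhaps by observing that a Gorenstein polytope with $d+1 < 2r$ forces enough collapsing in the face structure that all terms cancel). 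A secondary obstacle here is bookkeeping: making sure the conventions for the empty face, for $\widetilde{S}$ of low-dimensional faces, and for the index of $F^*$ are all consistent throughout the manipulation. I would organize the argument so that the combinatorial comparison lemma is proved first in full generality, and then polynomiality and the degree bound follow as fairly formal consequences.
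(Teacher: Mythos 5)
Your architecture is right in outline --- a comparison result for the lattice-point structure of $F$, $P$ and $F^*$, followed by bookkeeping of exponents --- but the crucial lemma is never stated, and the mechanisms you propose in its place would not close the argument. The comparison result that does the work is the inequality $\textnormal{codeg}\,F + \textnormal{codeg}\,F^* \geq r$ (equivalently $\deg F + \deg F^* \leq \deg P$), proved by a positivity computation inside the cone $C_P$ and its dual: writing $r - \textnormal{codeg}\,F - \textnormal{codeg}\,F^* = \langle n_{C_P} - f', m_{C_P^\vee} - f\rangle$ for interior lattice points $f$ of $(\textnormal{codeg}\,F)F$ and $f'$ of $(\textnormal{codeg}\,F^*)F^*$, and showing this pairing is $\leq 0$. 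Your substitute for this --- a functional equation ``shared between the pair $(F,F^*)$'' --- is a red herring: the reciprocity $\widetilde{S}(Q,t) = t^{\dim Q+1}\widetilde{S}(Q,t^{-1})$ holds for \emph{every} lattice polytope, Gorenstein or not, so no pairing of $F$ with $F^*$ is needed for it. More seriously, your proposed mechanism for killing the negative powers --- pairing the contributions of $F$ and $F^*$ and invoking a symmetry under $u\to u^{-1}$ or $u\leftrightarrow v$ --- cannot work: a Laurent polynomial symmetric under inversion can perfectly well have negative exponents (e.g.\ $u^{-1}+u$). What actually rules them out is that every monomial of $(-u)^{\dim F+1}\widetilde{S}(F,u^{-1}v)\,\widetilde{S}(F^*,uv)$ has both exponents at least $\textnormal{codeg}\,F + \textnormal{codeg}\,F^* \geq r$, using $\textnormal{subdeg}\,\widetilde{S}(Q,t)\geq \textnormal{codeg}\,Q$ and $\deg\widetilde{S}(Q,t)\leq\deg Q$; each summand is then individually a polynomial, and likewise in the case $d+1<2r$ each summand individually vanishes (one of $\widetilde{S}(F,t)$, $\widetilde{S}(F^*,t)$ is $0$), rather than ``all terms cancelling.''

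The degree claim is a genuine gap of a different kind: it is not proved in the paper and remains open. By Poincar\'e duality the coefficient of $(uv)^{d+1-2r}$ equals the constant coefficient, and the paper characterizes exactly which faces contribute to it; the face $F=P$ contributes only if $\widetilde{S}(P^\times,t)\neq 0$ with $\deg\widetilde{S}(P^\times,t)=\deg P^\times$, and whether this holds whenever $E_{st}(P;u,v)\neq 0$ is precisely the open question the paper reduces part (2) of the conjecture to. Your claim that all faces other than $F=P$ contribute strictly lower degree is also false: in the paper's Example~\ref{HardExample} six proper faces contribute to the top coefficient. So ``check that the leading coefficient does not vanish'' is not a routine verification but the unresolved heart of part (2); only polynomiality and the vanishing for $d+1<2r$ are actually established.
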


In Section~\ref{sectionproof} we prove that the stringy $E$-function is indeed a polynomial. An essential ingredient for the proof is a comparison theorem for the lattice point structure of a Gorenstein polytope $P$, a face $F$ of $P$ and the dual face $F^*$ (see Theorem~\ref{maintheorem}). In Section~\ref{joins} we study several notions of \textit{joins} of Gorenstein polytopes. This leads in a natural way to the definition of an \textit{irreducible} Gorenstein polytope (see Definition~\ref{Irreducible}). In Section~\ref{SectionNef} we show that this notion relates well to the notion of irreducible nef-partitions from \cite{BatyrevBorisovCompleteIntersections}. Finally, in Section~\ref{Ideas} we present several questions which might be interesting starting points for further investigation.\\
%As an application we prove that the leading coefficient of the stringy $E$-function is 0 or a power of 2. This fact was already experimentally %observed, see \cite[Rem.\ 4.11]{BatyrevNill}. We start by summarising the necessary definitions and some preliminary results in %Section~\ref{definitions}.

\noindent \textbf{Acknowledgements.} The computer program \textsc{Normaliz} 2.0 by Bruns and Ichim \cite{normaliz} was very useful for computing examples and in that way it was a source of inspiration. The first named author would like to thank the MSRI Berkeley for a Postdoctoral Fellowship during the Tropical Geometry program 2009 and Janko B\"ohm for many discussions about the stringy $E$-functions of Gorenstein polytopes. The second named author thanks the Institut des Hautes \'Etudes Scientifiques (IH\'ES), where part of this work was done, for its hospitality.

\section{Definitions and preliminaries}\label{definitions}

We recall all definitions that will be used later. We mainly follow \cite{BatyrevNill}.

\subsection{Gorenstein polytopes and cones}\label{subsectionGorenstein}

Let $M\cong \mathbb{Z}^d$ be a lattice of rank $d$ and $N=\text{Hom}_{\mathbb{Z}}(M,\mathbb{Z})$ the dual lattice. Denote by $\langle n,m \rangle\in \mathbb{Z}$ the natural pairing between elements $n\in N$ and $m\in M$. We denote the dual real vector spaces $M\otimes \mathbb{R}$ and $N\otimes \mathbb{R}$ by $M_{\mathbb{R}}$ and $N_{\mathbb{R}}$. Of course, the pairing $\langle\cdot,\cdot\rangle$ extends to a pairing $\langle\cdot,\cdot\rangle: M_{\mathbb{R}}\times N_{\mathbb{R}}\to \mathbb{R}$. 

By a \textit{lattice polytope} $P$ in $M_{\mathbb{R}}$ we mean the convex hull of finitely many points of $M$. The \textit{dimension} of $P$ is the dimension of the smallest affine subspace of $M_{\mathbb{R}}$ containing $P$. We consider $\emptyset$ as a lattice polytope of dimension $-1$. A \textit{face} of $P$ is any intersection of a hyperplane $H$ in $M_{\mathbb{R}}$ with $P$, such that $P$ is completely contained in one of the two closed half spaces determined by $H$ (it is convenient to consider $\emptyset$ and $P$ also as faces, but they are called \textit{improper} faces). We write $F\leq P$ if $F$ is a face of $P$. A \textit{facet} $F$ of $P$ is a face of codimension 1. If $P$ has dimension $d$, there is a unique hyperplane cutting out a facet $F$. It is called the \textit{supporting} hyperplane. Let $m\in M,n\in N$ and assume that $n$ is a primitive lattice point (i.e.\ the segment between the origin and $n$ in $N_{\mathbb{R}}$ does not contain any other lattice point). The \textit{integral distance} between $m$ and the hyperplane $H_{n,\delta} = \{ x\in M_{\mathbb{R}}  \,|\, \langle n , x \rangle = \delta  \}$, where $\delta\in \mathbb{Z}$, is the nonnegative integer $| \langle n , m \rangle - \delta |$. 

\begin{definition}\label{reflexive}
Let $P$ be a lattice polytope of dimension $d$ in $M_{\mathbb{R}}$. It is called a \textit{reflexive polytope} if $0$ is in the interior of $P$ and if $0$ has integral distance $1$ to all supporting hyperplanes of facets of $P$.
\end{definition}

In this case $0$ is the only interior lattice point. We could also have used the following criterion as definition.

\begin{proposition}
Let $P$ be a lattice polytope in $M_{\mathbb{R}}$ of dimension $d$ having $0$ in its interior. Then $P$ is reflexive if and only if the dual set 
\[ P^{\times} = \{ y\in N_{\mathbb{R}} \,|\, \langle y,x \rangle \geq -1 \text{ for all } x\in P \} \] is a lattice polytope in $N_{\mathbb{R}}$.
\end{proposition}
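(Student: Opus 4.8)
The plan is to make the polar duality between the facets of $P$ and the vertices of $P^\times$ completely explicit, so that the reflexivity condition of Definition~\ref{reflexive} translates directly into an integrality statement about those vertices. First I would put $P$ in normal form. Since $P$ is full-dimensional (as $M\cong\mathbb{Z}^d$) and $0$ lies in its interior, each facet $F_i$ of $P$ ($i=1,\ldots,s$) has a unique supporting hyperplane, which I write as $H_{n_i,-a_i}$ with $n_i\in N$ primitive and $a_i\in\mathbb{Z}$; evaluating at $0$ gives $0=\langle n_i,0\rangle>-a_i$, so $a_i\geq 1$, and $P=\bigcap_{i=1}^{s}\{x\in M_{\mathbb{R}} : \langle n_i,x\rangle\geq -a_i\}$. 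The integral distance from $0$ to $H_{n_i,-a_i}$ is exactly $a_i$, so $P$ is reflexive if and only if $a_i=1$ for all $i$.

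Next I would identify $P^\times$. Because $0$ is interior to $P$, the set $P^\times$ is a bounded polytope containing $0$ in its interior, and the claim is that $P^\times=\operatorname{conv}(n_1/a_1,\ldots,n_s/a_s)$ with each $n_i/a_i$ an actual vertex. The key lemma is: picking $x_0$ in the relative interior of the facet $F_i$, the linear functional $y\mapsto\langle y,x_0\rangle$ attains its minimum over $P^\times$ — namely the value $-1$ — exactly at the single point $n_i/a_i$. Indeed $\langle n_i/a_i,x_0\rangle=-1$ and $\langle y,x_0\rangle\geq-1$ for every $y\in P^\times$ by definition; and if $y\in P^\times$ satisfies $\langle y,x_0\rangle=-1$, then $x_0$ minimises $\langle y,\cdot\rangle$ over $P$, hence $\langle y,\cdot\rangle$ is constant on the smallest face of $P$ containing $x_0$, which is $F_i$, hence constant on $\operatorname{aff}(F_i)=H_{n_i,-a_i}$; this forces $y\in\mathbb{R}\,n_i$, and evaluating shows $y=n_i/a_i$. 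So $n_i/a_i$ is a vertex of $P^\times$, for every facet $F_i$.

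With this dictionary both implications are immediate. If $P$ is reflexive, then all $a_i=1$, so $P^\times=\operatorname{conv}(n_1,\ldots,n_s)$ has all of its vertices in $N$ and is therefore a lattice polytope. Conversely, if $P^\times$ is a lattice polytope, then each vertex $n_i/a_i$ lies in $N$; since $n_i$ is primitive this forces $a_i=1$ for every $i$, and hence $P$ is reflexive.

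The only step that is not essentially bookkeeping is the inclusion $P^\times\subseteq\operatorname{conv}(n_1/a_1,\ldots,n_s/a_s)$ underlying the key lemma (the reverse inclusion is clear, since each $n_i/a_i$ lies in $P^\times$). This is the standard fact that the polar dual of a polytope presented by an irredundant system of inequalities is the convex hull of the corresponding scaled normals; I would either quote it from a standard reference on polytope polarity or derive it by linear-programming (Farkas) duality: minimising $\langle y,\cdot\rangle$ over the $H$-presentation $\langle n_i,x\rangle\geq-a_i$ and dualising writes any $y\in P^\times$ as $\sum_i\mu_i(n_i/a_i)$ with $\mu_i\geq0$ and $\sum_i\mu_i\leq1$, and since the $n_i$ positively span $N_{\mathbb{R}}$ (as $P$ is bounded) the origin already lies in the interior of $\operatorname{conv}(n_i/a_i)$, so $y$ lies in that convex hull. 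I expect no real obstacle beyond stating this polarity dictionary cleanly; everything else reduces to elementary facts about faces of polytopes.
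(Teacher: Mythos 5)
Your argument is correct. Note that the paper itself gives no proof of this proposition --- it is quoted as the standard characterization of reflexivity going back to Batyrev --- so there is nothing to compare against; what you write is the classical polar-duality argument. The two load-bearing points are both handled properly: (i) the ``key lemma'' that each scaled facet normal $n_i/a_i$ is a vertex of $P^{\times}$ (your argument via the minimizing face of $\langle y,\cdot\rangle$ containing a relative-interior point of $F_i$ is sound, and it correctly uses that $F_i$ is a facet so that $\operatorname{aff}(F_i)$ is a full hyperplane, forcing $y\in\mathbb{R}\,n_i$), and (ii) the reverse inclusion $P^{\times}\subseteq\operatorname{conv}(n_i/a_i)$, which you reduce to LP duality; the dual program does give $y=\sum_i\lambda_i(n_i/a_i)$ with $\lambda_i\geq 0$ and $\sum_i\lambda_i\leq 1$, and boundedness of $P$ ensures the normals positively span $N_{\mathbb{R}}$ so that $0\in\operatorname{conv}(n_i/a_i)$ and the convex hull absorbs the slack. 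The integrality bookkeeping ($a_i\in\mathbb{Z}_{\geq 1}$ because facets contain lattice points and $0$ is interior; $n_i/a_i\in N$ with $n_i$ primitive forces $a_i=1$) is also right. No gaps.
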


In that case $P^{\times}$ is also a reflexive polytope, called the \textit{dual polytope} of $P$. This duality induces an inclusion reversing 1-1-correspondence between the faces of $P$ and of $P^{\times}$ by
\[ F \leq P \mapsto F^{*}= \{ y\in P^{\times} \,|\, \langle y, x \rangle = -1 \text{ for all } x\in F \} \leq P^{\times}. \] Note that $\dim F + \dim F^{*} = d-1$. 

In the literature one denotes the dual reflexive polytope usually by $P^*$, but we decided to use the notation $P^{\times}$ since, by the above correspondence, $P^*$ also means the empty face of the dual polytope.

Next we define Gorenstein polytopes.

\begin{definition}\label{GorensteinPolytope}
Let $P$ be a lattice polytope of dimension $d$ in $M_{\mathbb{R}}$. Then $P$ is called a \textit{Gorenstein polytope of index} $r$ ($r\in \mathbb{Z}_{>0}$) if $rP$ contains a unique interior lattice point $m$ and if $rP-m$ is a reflexive polytope.
\end{definition}

\begin{example}
The standard simplex in $\mathbb{R}^d$ is a Gorenstein polytope of index $d+1$ for the lattice $\mathbb{Z}^d$. The standard unit cube is a Gorenstein polytope of index 2.
\end{example}

To define the dual of a Gorenstein polytope, we first need to introduce Gorenstein cones. Let $M$ and $N$ be dual lattices of rank $d$ with pairing $\langle \cdot ,\cdot \rangle $ as above. By a \textit{cone} $C$ in $M_{\mathbb{R}}$ we will always mean a pointed rational polyhedral cone, with vertex at 0. This means that the only linear subspace contained in $C$ is $\{0\}$ and that there are finitely many points $p_1,\ldots ,p_n$ in $M$ such that 
\[ C  = \{ \alpha_1 p_1 + \cdots + \alpha_n p_n \, | \, \alpha_i \in \mathbb{R}_{\geq 0}\text{ for all } i  \}.\]
We say that $C$ is \textit{generated} by $p_1,\ldots , p_n$. The dimension and faces of a cone are defined as for a polytope but now the empty set is not considered to be a face. We will always assume that a cone has maximal dimension $d$. The dual
\[ C^{\vee} := \{ y\in N_{\mathbb{R}} \,|\, \langle y,x \rangle \geq 0 \text{ for all } x\in C \} \]
is a cone again. There is again a 1-1-correspondence 
\[F\mapsto F^{\vee}=\{ y\in C^{\vee} \,|\, \langle y, x \rangle = 0 \text{ for all } x\in F \}   \] between faces of $C$ and $C^{\vee}$, now with $\dim F + \dim F^{\vee}=d$. 

\begin{definition}\label{GorensteinCone}
A cone $C$ in $M_{\mathbb{R}}$ is called a \textit{Gorenstein cone} if there exists a lattice point $n\in N$ such that $C$ is generated by finitely many lattice points in the affine hyperplane $H_{n,1} = \{x\in M_{\mathbb{R}} \,|\,  \langle n,x \rangle = 1\}$. The lattice polytope $C\cap H_{n,1}$ is called the \textit{support polytope} of $C$.
\end{definition}

One can see that the lattice point $n$ lies in the interior of $C^{\vee}$ and that it is uniquely determined by $C$. We denote it by $n_C$. The $k$\textit{-th slice} $C_{(k)}$ of $C$ is the lattice polytope $C \cap \{x\in M_{\mathbb{R}} \,|\,  \langle n_C ,x \rangle = k\}$.

Starting from a lattice $M'$ of rank $e$ and a lattice polytope $P$ in $M'_{\mathbb{R}}$ of dimension $e$, we consider the lattice $M'\oplus \mathbb{Z}$ and its corresponding vector space $M'_{\mathbb{R}}\oplus \mathbb{R}$. We can define the Gorenstein cone $C_P$ \textit{associated to $P$} in $M'_{\mathbb{R}} \oplus \mathbb{R}$ by taking the cone generated by the vertices of $\overline{P} = P \times \{1\}$. 

\begin{definition}\label{reflexiveGorensteincone}
A Gorenstein cone $C$ in $M_{\mathbb{R}}$ is called \textit{reflexive} if $C^{\vee}$ is a Gorenstein cone as well. We denote the unique interior lattice point of $C$ that determines the support polytope of $C^{\vee}$ by $m_{C^{\vee}}$. The positive integer $r=\langle n_C , m_{C^{\vee}} \rangle$ is called the \textit{index} of $C$. 
\end{definition}

If $C$ is a reflexive Gorenstein cone of index $r$, then this is also true for $C^{\vee}$. One uses the following proposition to define the dual of a Gorenstein polytope \cite[Prop.\ 2.11]{BatyrevBorisov2}. 

\begin{proposition}
Let $C$ be a Gorenstein cone in $M_{\mathbb{R}}$. Then the following are equivalent:
\begin{itemize}
\item[(1)] $C$ is reflexive of index $r$,
\item[(2)] $C_{(r)}$ is a reflexive polytope with $m_{C^{\vee}}$ as its unique interior point,
\item[(3)] the support polytope $C_{(1)}$ of $C$ is a Gorenstein polytope of index $r$.
\end{itemize}
\end{proposition}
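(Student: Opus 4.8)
\noindent The plan is to prove the cycle of implications $(1)\Rightarrow(2)\Rightarrow(3)\Rightarrow(1)$. I would set $P:=C_{(1)}$, $M':=M\cap\ker n_C\cong\mathbb{Z}^{d-1}$, and let $y_1,\dots,y_t\in N$ be the primitive inner normals of the facets of $C$ (irredundantly), so that $C=\{x\in M_{\mathbb{R}}:\langle y_i,x\rangle\geq 0\ \forall i\}$ and $C^{\vee}=\mathbb{R}_{\geq 0}y_1+\dots+\mathbb{R}_{\geq 0}y_t$. Two easy facts will be used repeatedly: (a) since $C$ is a Gorenstein cone it is the cone over $P$, its extreme ray generators are exactly the vertices of $P$, and each of these is a primitive lattice point in $H_{n_C,1}$, so $C_{(k)}=kP$ is a lattice polytope for every $k\geq 1$; and (b) as $n_C$ is interior to $C^{\vee}$, one has $\mathrm{int}(C)=\{x:\langle y_i,x\rangle>0\ \forall i\}$, so the relative interior of any slice $C_{(k)}$ with $k>0$ is contained in $\mathrm{int}(C)$.

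The heart of the argument will be a lemma: for each facet $F$ of $C$ with primitive inner normal $y_F$, the restriction $y_F|_{M'}\in(M')^{*}$ is primitive. To prove it I would pick an extreme ray generator $g$ of $C$ lying in $F$ (possible since $\dim F=d-1$; the case $d=1$ is trivial), use $\langle y_F,g\rangle=0$ and $\langle n_C,g\rangle=1$ — the latter giving the splitting $M=M'\oplus\mathbb{Z}g$ — write $y_F|_{M'}=c\,w$ with $w\in(M')^{*}$ primitive and $c\geq 1$, and extend $w$ to $\tilde w\in N$ with $\langle\tilde w,g\rangle=0$; then $y_F-c\,\tilde w$ kills $M'$ and $g$, hence all of $M$, so $y_F=c\,\tilde w$ and primitivity of $y_F$ forces $c=1$. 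This is exactly where the Gorenstein property enters (all ray generators at $n_C$-level $1$), and it is what fails for general cones.

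With the lemma the three implications become short. For $(1)\Rightarrow(2)$: reflexivity of $C$ means $C^{\vee}$ is a Gorenstein cone with defining point $n_{C^{\vee}}=m_{C^{\vee}}\in\mathrm{int}(C)$, $\langle n_C,m_{C^{\vee}}\rangle=r$, and ray generators $y_1,\dots,y_t$, whence $\langle y_i,m_{C^{\vee}}\rangle=1$ for all $i$; translating the lattice polytope $C_{(r)}=rP$ by $-m_{C^{\vee}}$ into $M'_{\mathbb{R}}$ turns the facet hyperplanes into $\{z:\langle y_i|_{M'},z\rangle=-1\}$, and by the lemma these $y_i|_{M'}$ are primitive, so $0$ has integral distance $1$ to each and is interior by (b) — i.e.\ $C_{(r)}-m_{C^{\vee}}$ is reflexive. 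The step $(2)\Rightarrow(3)$ is just the definition, since $rP=C_{(r)}$. For $(3)\Rightarrow(1)$: if $rP=C_{(r)}$ has unique interior lattice point $m$ with $rP-m$ reflexive, then $m\in\mathrm{int}(C)$ (by (b)) and $\langle n_C,m\rangle=r$; translating $C_{(r)}$ by $-m$ makes the $i$-th facet hyperplane $\{z:\langle y_i|_{M'},z\rangle=-\langle y_i,m\rangle\}$, and since $y_i|_{M'}$ is primitive, reflexivity forces $\langle y_i,m\rangle=1$ for all $i$; hence every ray generator of $C^{\vee}$ lies in $H_{m,1}$ with $m\in M$, so $C^{\vee}$ is a Gorenstein cone with defining point $m$; therefore $C$ is reflexive, $m_{C^{\vee}}=m$, and its index is $\langle n_C,m_{C^{\vee}}\rangle=r$.

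The hard part will be the primitivity lemma and the lattice bookkeeping it supports: one has to identify, under the translation carrying $H_{n_C,k}$ onto $\ker n_C$, the ambient lattice $M\cap H_{n_C,k}$ of the slice $C_{(k)}$ with $M'=M\cap\ker n_C$, and verify that the primitive facet normals of $C$ restrict to \emph{primitive} functionals on $M'$ — without this, the ``integral distance equals $1$'' conditions would not translate faithfully between the cone $C^{\vee}$ and the polytope $C_{(r)}$ in both directions; everything else is unwinding definitions.
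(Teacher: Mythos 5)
The paper does not actually prove this proposition: it is imported verbatim from Batyrev and Borisov \cite[Prop.\ 2.11]{BatyrevBorisov2}, so there is no internal proof to compare against. Your argument is a correct, self-contained substitute. The cycle $(1)\Rightarrow(2)\Rightarrow(3)\Rightarrow(1)$ is sound, and you have isolated the one genuinely non-formal step in the right place: because every facet $F$ of a Gorenstein cone contains an extreme ray generator $g$ with $\langle n_C,g\rangle=1$, the splitting $M=M'\oplus\mathbb{Z}g$ together with $\langle y_F,g\rangle=0$ forces the restriction $y_F|_{M'}$ to be primitive, and this is exactly what makes the ``integral distance one'' conditions transfer faithfully in both directions between the ray generators of $C^{\vee}$ and the facet data of the slice $C_{(r)}$. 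Two small points you may want to make explicit, neither of which is a gap: first, condition (2) as literally stated presupposes that $m_{C^{\vee}}$ is defined (i.e.\ that $C$ is already reflexive), so for the implication $(2)\Rightarrow(3)$ one should read (2) as ``$C_{(r)}$ has a unique interior lattice point and is reflexive after translating that point to the origin'', which is what your argument in fact uses; second, in $(3)\Rightarrow(1)$ you should record that a pointed cone is Gorenstein with defining point $m$ if and only if all of its \emph{primitive} ray generators lie at level one with respect to $m$ (a generator at level one on an extreme ray is automatically primitive, since $\langle\cdot,m\rangle$ takes integer values), so that ``$\langle y_i,m\rangle=1$ for all $i$'' really is equivalent to ``$C^{\vee}$ is a Gorenstein cone with defining point $m$''.
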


\begin{definition}
Let $P$ be a $d$-dimensional Gorenstein polytope of index $r$ in $M_{\mathbb{R}}$. The \textit{dual Gorenstein polytope} $P^{\times}$ is the support polytope of the dual $C_P^{\vee}$ of the cone $C_P$ associated to $P$ in $M_{\mathbb{R}}\oplus \mathbb{R}$.
\end{definition}

The duality between the sets of faces of $C_P$ and $C_P^{\vee}$ induces a duality between the sets of faces of $P$ and $P^{\times}$. We denote the face of $P^{\times}$ dual to a face $F$ of $P$ by $F^*$. Then $\dim F + \dim F^{*} = d-1$. Note that we recover the duality for reflexive polytopes (being Gorenstein polytopes of index $1$).

\subsection{Polynomial invariants of lattice polytopes}

Let $M$ be a lattice and let $P$ be a lattice polytope in $M_{\mathbb{R}}$ of dimension $d\geq 0$. Denote the number of lattice points in $mP$ for $m\in \mathbb{Z}_{>0}$ by $f_P(m)$. Then it is well known (see \cite{Ehrhart,Stanley80}) that one can write the \textit{Ehrhart series} $1+\sum_{m\geq 1} f_P(m)\, t^m$ in the form
\[ \frac{h^*_P(t)}{(1-t)^{d+1}},\]
where $h_P^*(t) \in \mathbb{Z}[t]$ is a polynomial with nonnegative coefficients and with degree $s\leq d$. One calls $s$ the \textit{degree} of $P$. The positive integer $d+1-s$ is called the \textit{codegree} of $P$. We denote them by deg\,$P$ and codeg\,$P$ respectively. It follows from Ehrhart reciprocity that the codegree of $P$ is the smallest nonnegative integer $k$ for which $kP$ has a lattice point in its relative interior, and that the leading coefficient of $h^*_P(t)$ is the number of interior points in $(d+1-s)P$. Note that the index of a Gorenstein polytope is simply its codegree. We remark that one defines $h_{\emptyset}^*(t) := 1$. Then $\deg \emptyset = \textnormal{codeg}\,\emptyset = 0$.

Hibi characterized Gorenstein polytopes as follows, see \cite{Hibi}. In view of this result, we consider $\emptyset$ as a Gorenstein polytope as well.

\begin{theorem}\label{resultHibi}
A lattice polytope $P$ is Gorenstein if and only if its $h^*$-po\-ly\-no\-mial is palindromic, i.e.\ $h^*_P(t) = t^{\deg P} h^*_P(t^{-1})$. In particular, the leading coefficient equals $1$.
\end{theorem}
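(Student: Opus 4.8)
This is Hibi's result \cite{Hibi}; one can prove it directly via Ehrhart reciprocity, as follows. Write $h^*_P(t)=\sum_{i=0}^{s}a_i t^i$ with $a_s\neq 0$, so $s=\deg P$, and put $l=\textnormal{codeg}\,P=d+1-s$; recall $a_0=1$, and assume $d\geq 1$ (for $d\leq 0$ everything is trivial). For $m\geq 1$ let $f^\circ_P(m)$ be the number of lattice points in the relative interior of $mP$, and set $f_P(0)=1$. Ehrhart reciprocity reads
\[
\sum_{m\geq 1}f^\circ_P(m)\,t^m=\frac{t^{\,d+1}h^*_P(t^{-1})}{(1-t)^{d+1}}=\frac{t^{\,l}\cdot t^{s}h^*_P(t^{-1})}{(1-t)^{d+1}},
\]
and since $t^{s}h^*_P(t^{-1})$ is the coefficient-reversal of $h^*_P$, comparison with $h^*_P(t)/(1-t)^{d+1}=\sum_{m\geq 0}f_P(m)t^m$ shows that $h^*_P$ is palindromic if and only if
\[
(\star)\qquad f^\circ_P(m+l)=f_P(m)\qquad\text{for all }m\geq 0
\]
(using that $f^\circ_P(j)=0$ for $j<l$, which holds for every lattice polytope). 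I will also use the elementary fact that, writing $P=\{x\in M_{\mathbb R}\mid\langle u_G,x\rangle\leq c_G\ \text{for each facet }G\}$ with $u_G\in N$ primitive and $c_G\in\mathbb Z$, a lattice point $x$ lies in the relative interior of $kP$ (for $k\geq 1$) precisely when $\langle u_G,x\rangle\leq kc_G-1$ for every facet $G$.

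Suppose first that $P$ is Gorenstein of index $r$: $rP$ has a unique interior lattice point $m$, and $Q':=rP-m$ is reflexive. The supporting hyperplane of the facet of $Q'$ coming from $G$ is $\{\langle u_G,\cdot\rangle=rc_G-\langle u_G,m\rangle\}$, and $rc_G-\langle u_G,m\rangle$ is a positive integer, so reflexivity of $Q'$ forces $\langle u_G,m\rangle=rc_G-1$ for every facet $G$. Substituting $x+m$ into the inequality description above, the lattice point $x+m$ lies in the relative interior of $(m'+r)P$ exactly when $x\in m'P$; hence $x\mapsto x+m$ is a bijection from $m'P\cap M$ onto the interior lattice points of $(m'+r)P$, and $f^\circ_P(m'+r)=f_P(m')$ for all $m'\geq 0$. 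Moreover $r=\textnormal{codeg}\,P=l$, as already noted (alternatively: $l\leq r$ because $rP$ has an interior lattice point, and if $l<r$ then super-additivity of interior-point counts would give $f^\circ_P(r)\geq f_P(r-l)\geq d+1>1$, contradicting $f^\circ_P(r)=1$). Therefore $(\star)$ holds, $h^*_P$ is palindromic, and in particular $a_s=f^\circ_P(l)=1$.

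Conversely, assume $h^*_P$ is palindromic, i.e.\ $(\star)$ holds. Then $f^\circ_P(l)=f_P(0)=1$, so $lP$ has a unique interior lattice point $m_0$; put $Q:=lP-m_0$, a $d$-dimensional lattice polytope with $0$ in its interior. From $f_Q(k)=f_P(kl)$, $f^\circ_Q(k)=f^\circ_P(kl)$ and $(\star)$ one gets $f^\circ_Q(k)=f^\circ_P((k-1)l+l)=f_P((k-1)l)=f_Q(k-1)$ for every $k\geq 1$, so it remains to settle the reflexive case: a $d$-dimensional lattice polytope $Q$ with $0$ in its interior and $f^\circ_Q(k)=f_Q(k-1)$ for all $k\geq 1$ is reflexive. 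Write $Q=\{z\mid\langle w_j,z\rangle\leq b_j\}$ with $w_j\in N$ primitive and $b_j\in\mathbb Z_{>0}$. Since $0$ is interior, $(k-1)Q\subseteq\textnormal{int}(kQ)$ and every boundary lattice point of $kQ$ lies outside $(k-1)Q$, so a comparison of cardinalities turns the hypothesis into $\textnormal{int}(kQ)\cap M\subseteq (k-1)Q$ for all $k\geq 1$. If some $b_{j_0}\geq 2$, take $k$ large and a lattice point $p$ lying deep in the relative interior of the facet $kF_{j_0}$ of $kQ$, so that $\langle w_{j_0},p\rangle=kb_{j_0}$ while $\langle w_j,p\rangle\leq kb_j-C$ for $j\neq j_0$ with $C$ arbitrarily large; fix a lattice vector $e$ with $\langle w_{j_0},e\rangle=-1$. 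Then $q:=p+e$ satisfies $\langle w_{j_0},q\rangle=kb_{j_0}-1$ and $\langle w_j,q\rangle\leq kb_j-1$ for $j\neq j_0$, so $q\in\textnormal{int}(kQ)\cap M$; but $\langle w_{j_0},q\rangle=kb_{j_0}-1>(k-1)b_{j_0}$, so $q\notin (k-1)Q$, a contradiction. Hence all $b_j=1$, $Q$ is reflexive, and $P$ is Gorenstein of index $l$.

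The one genuinely geometric step --- and the place I expect the real work --- is the reflexive case; everything else is formal manipulation of Ehrhart series. Making ``$p$ lies deep in the relative interior of $kF_{j_0}$'' precise requires a routine scaling/inradius estimate on the $(d-1)$-dimensional lattice polytope $F_{j_0}$: its dilates eventually contain relative-interior lattice points, and for $k$ large such points can be chosen at lattice distance exceeding any prescribed bound from the relative boundary, which forces $\langle w_j,p\rangle$ to lie far below $kb_j$ for all $j\neq j_0$. Alternatively, one may invoke \cite{Hibi} directly, or derive the theorem from the Cohen--Macaulayness of the Ehrhart semigroup ring together with Stanley's criterion for a graded Cohen--Macaulay domain to be Gorenstein.
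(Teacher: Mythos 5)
The paper itself gives no proof of this statement: it is quoted as Hibi's theorem with a citation to \cite{Hibi}, and Hibi's route (building on Stanley \cite{Stanley91}) is commutative-algebraic, via the Gorenstein property of the Ehrhart ring and the functional equation for Hilbert series of graded Cohen--Macaulay domains. Your argument is a genuinely different, purely lattice-geometric proof, and it is correct. The translation of palindromicity into the counting identity $f^\circ_P(m+l)=f_P(m)$ via Ehrhart reciprocity is right (the needed vanishing $f^\circ_P(j)=0$ for $j<l$ is exactly the statement, recorded in the paper, that the codegree is the first dilate with an interior lattice point). In the forward direction, the identification $\langle u_G,m\rangle=rc_G-1$ and the bijection $x\mapsto x+m$ are exactly what is needed, and your parenthetical superadditivity argument $f^\circ_P(r)\ge f_P(r-l)\ge d+1>1$ closes the easy-to-overlook gap that $l=r$. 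In the converse, the reduction to ``$0$ interior and $f^\circ_Q(k)=f_Q(k-1)$ for all $k\ge1$ implies reflexive'' is clean, and the contradiction extracted from a facet with $b_{j_0}\ge2$ is sound; the one step you leave as a sketch --- producing for large $k$ a lattice point $p$ with $\langle w_{j_0},p\rangle=kb_{j_0}$ and $\langle w_j,p\rangle\le kb_j-C$ for $j\ne j_0$ --- is indeed routine: take $ky_0$ for $y_0$ in the relative interior of the facet and move by a bounded amount to a lattice point of the hyperplane $\{\langle w_{j_0},\cdot\rangle=kb_{j_0}\}$, whose lattice points form a full-rank coset of $\ker(w_{j_0}|_M)$ because $w_{j_0}$ is primitive. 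Two small points you should make explicit: one may assume $P$ is full-dimensional in $M_{\mathbb R}$ (otherwise work in the affine lattice of its span), and the $b_j$ are integers because every facet of a lattice polytope contains lattice points. What your approach buys is a self-contained, elementary proof inside the combinatorial framework of the paper; what the cited algebraic route buys is brevity and the connection to canonical modules.
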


Two lattice polytopes $P\subset M_{\mathbb{R}}$ and $Q\subset M'_{\mathbb{R}}$ are called \textit{isomorphic} if there exists an invertible affine transformation $A$ of $M_{\mathbb{R}}$ with $A(M)=M$ and an isomorphism $B: M'\to M$ of lattices such that $A(P)=B_{\mathbb{R}}(Q)$, where $B_{\mathbb{R}}$ denotes the induced isomorphism $M'_{\mathbb{R}} \to M_{\mathbb{R}}$. Then $h_P^*(t)=h_Q^*(t)$. 

%Next we have a closer look at some properties of the set of faces of a lattice polytope $P$. Ordered by inclusion it becomes a poset (i.e.\ %\textbf{p}artially \textbf{o}rdered \textbf{set}), with $\emptyset$ as minimal element and $P$ as maximal element. Now let $\mathcal{P}$ be an %arbitrary finite poset with a minimal element $\hat{0}$ and a maximal element $\hat{1}$. For elements $x,y\in \mathcal{P}$ with $x\leq y$ the interval %$[x,y]$ is the set $\{z\in \mathcal{P}\, | \, x\leq z\leq y\}$. One calls $\mathcal{P}$ \textit{graded of rank} $e$ if every saturated chain $\hat{0} %=x_0 < x_1 < \cdots < x_{e-1} < x_e = \hat{1}$ has the same length $e$. Then there is a \textit{rank function} $\rho : \mathcal{P} \to \{ 0,\ldots, %e\}$ with $\rho(x)$ defined as the length of a saturated chain in $[\hat{0},x]$. A finite graded poset $\mathcal{P}$ is called \textit{Eulerian} if %every nontrivial interval in $\mathcal{P}$ has an equal number of elements of even and odd rank. It follows that an interval in an Eulerian poset is %Eulerian as well. The dual poset $\mathcal{P}^*$ (i.e.\ $\mathcal{P}$ with reversed order) of an Eulerian poset $\mathcal{P}$ is also Eulerian. 

It is well known \cite[p.122]{Stanley97} that the set of faces of a lattice polytope $P$ is an Eulerian poset with rank function $\rho(F)=\dim F +1$.

\begin{definition}
Let $\mathcal{P}$ be an Eulerian poset of rank $e$, with rank function $\rho$, minimal element $\hat{0}$ and maximal element $\hat{1}$. Define 
\[ g(\mathcal{P},t), h(\mathcal{P},t) \in \mathbb{Z}[t] \]
by the recursive rules
\[ g(\mathcal{P},t)=h(\mathcal{P},t)=1 \text{ if } e=0,\]
\[ h(\mathcal{P},t) = \sum_{\hat{0}< x \leq \hat{1}} (t-1)^{\rho(x)-1} g([x,\hat{1}],t) \text{ if } e>0, \]
\[ g(\mathcal{P},t) = \tau_{< e/2} \bigl((1-t)\, h(\mathcal{P},t)\bigr) \text{ if } e>0, \]
where $\tau_{<r}: \mathbb{Z}[t] \to \mathbb{Z}[t] $ is the
truncation operator defined by
\[ \tau_{<r}\biggl(\sum_i a_it^i \biggr) = \sum_{i<r} a_it^i.\]
\end{definition} 

Note that for $e>0$, $\deg h(\mathcal{P},t) =e-1$ and $\deg g(\mathcal{P},t) \leq (e-1)/2$. 

\begin{remark}\label{remarkg}
The $g$-polynomial has the following properties.
\begin{itemize}
\item[(1)] For a product $\mathcal{P}\times \mathcal{Q}$ of posets (with order defined by $(x,y)\leq (x',y')$ if and only if $x\leq x'$ in $\mathcal{P}$ and $y\leq y'$ in $\mathcal{Q}$), we have that $g(\mathcal{P}\times \mathcal{Q},t ) = g(\mathcal{P},t)\, g(\mathcal{Q},t)$.
\item[(2)] If $e> 0$ then $\displaystyle{\sum_{\hat{0} \leq x\leq \hat{1}}}
g([\hat{0},x],t)\, g([x,\hat{1}]^*,t) (-1)^{\rho(\hat{1})-\rho(x)}
=0,$

and also $\displaystyle{\sum_{\hat{0} \leq x\leq \hat{1}}}
(-1)^{\rho(x)-\rho(\hat{0})} g([\hat{0},x]^*,t)\, g([x,\hat{1}],t)
=0$.\\
This is called Stanley's convolution property,
see \cite[Cor.\ 8.3]{Stanley92}.
\end{itemize}
\end{remark}

For face posets of lattice polytopes we have the following beautiful result (a translation of the Hard Lefschetz property of intersection cohomology for projective toric varieties), see \cite[Cor.\ 3.2]{Stanley87}.

\begin{theorem}
Let $P$ be a lattice polytope and let $\mathcal{P}$ be its poset of faces. Then $g(\mathcal{P},t)$ has nonnegative coefficients.
\end{theorem}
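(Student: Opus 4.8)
\noindent The plan is to follow Stanley and deduce the statement from the Hard Lefschetz theorem for intersection cohomology. Since $P$ is a lattice polytope it is in particular rational, so, working in the affine span of $P$, one may attach to it a $d$-dimensional complex projective toric variety $X = X_P$ --- the toric variety of the normal fan of $P$ --- carrying an ample class $L \in H^2(X;\mathbb{Q})$ induced by the polarization of $X$ by $P$ itself. Write $\mathcal{P}$ for the face poset of $P$, an Eulerian poset of rank $e = d+1$, and $h(\mathcal{P},t) = \sum_{i=0}^{d} h_i t^i$.

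The first and main ingredient I would bring in is the dictionary between $h(\mathcal{P},t)$ and the intersection cohomology of $X$: the odd intersection cohomology vanishes, and $\dim_{\mathbb{Q}} IH^{2i}(X;\mathbb{Q}) = h_i$ for $0 \le i \le d$. This is exactly the geometric meaning of the recursions defining $h$ and $g$. Stratifying $X$ by torus orbits and applying the decomposition theorem, the local intersection cohomology of $X$ along the orbit corresponding to a face $F$ is governed by the $g$-polynomial of the interval $[F,P]$ in $\mathcal{P}$ (equivalently, of the face poset of the quotient polytope $P/F$); summing these local contributions over all faces reproduces precisely the displayed formula for $h(\mathcal{P},t)$. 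In particular, Poincaré duality for $IH$ recovers the palindromy $h_i = h_{d-i}$.

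I would then invoke the Hard Lefschetz theorem for the intersection cohomology of the projective variety $X$: cup product with the ample class $L$ gives an injection $IH^{2i-2}(X;\mathbb{Q}) \hookrightarrow IH^{2i}(X;\mathbb{Q})$ for every $i$ with $2i \le d$. Taking dimensions, $h_i - h_{i-1} \ge 0$ whenever $2i \le d$. Now $(1-t)\,h(\mathcal{P},t)$ has $t^i$-coefficient equal to $h_i - h_{i-1}$, and $g(\mathcal{P},t) = \tau_{<e/2}\bigl((1-t)\,h(\mathcal{P},t)\bigr)$ retains exactly the coefficients of index $i$ with $2i < e = d+1$, i.e.\ with $2i \le d$; the coefficients of $g(\mathcal{P},t)$ of larger index are $0$. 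Hence every coefficient of $g(\mathcal{P},t)$ is nonnegative. (For $e \le 1$, i.e.\ $P = \emptyset$ or $P$ a point, the claim is trivial, as is the coefficient $g_0 = h_0 = 1$ in general.)

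The only genuinely hard parts are the two geometric inputs: the identification of the toric $h$-vector with the intersection cohomology Betti numbers of the (generically very singular, non-simplicial) toric variety $X$, and the Hard Lefschetz theorem for intersection cohomology. I expect the former to be the real obstacle to a self-contained argument --- it is a theorem of Stanley resting on the work of Bernstein--Khovanskii--MacPherson and later clarified by Denef--Loeser and Fieseler, and it is itself of an inductive nature, the $g$-polynomials of proper faces of $P$ reappearing inside it as local intersection cohomology data. I would also remark that this route requires $P$ to be rational; for an arbitrary polytope one would instead appeal to Karu's combinatorial Hard Lefschetz theorem for the combinatorial intersection cohomology of a fan, but for lattice polytopes the argument above is the natural one.
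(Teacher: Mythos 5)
Your proposal is correct and is precisely the argument the paper has in mind: it cites Stanley's Cor.\ 3.2 and describes the result as ``a translation of the Hard Lefschetz property of intersection cohomology for projective toric varieties,'' which is the identification of the toric $h$-vector with $\dim IH^{2i}(X_P;\mathbb{Q})$ followed by Hard Lefschetz, exactly as you lay out. The bookkeeping with the truncation $\tau_{<e/2}$ and the injectivity range $2i\le d$ checks out, and your closing remark about Karu's combinatorial Hard Lefschetz for the non-rational case is an accurate aside.
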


Borisov and Mavlyutov combined the $h^*$- and the $g$-polynomial in the following definition \cite[Def.\ 5.3]{BorisovMavlyutov}.

\begin{definition}\label{Stilde}
Let $P$ be a lattice polytope. Define the polynomial $\widetilde{S}(P,t)\in \mathbb{Z}[t]$ by 
\[ \widetilde{S}(P,t) = \sum_{\emptyset \leq F \leq P} (-1)^{\dim P - \dim F} h_F^*(t)\, g([F,P],t),\]
where we sum over all faces $F$ of $P$ and where $[F,P]$ denotes the interval in the Eulerian poset of faces of $P$.
\end{definition}

\begin{remark}\label{remarkStilde}
The polynomial $\widetilde{S}(P,t)$ has many nice properties.
\begin{itemize}
\item[(1)] $\widetilde{S}(\emptyset,t) = 1$ and $\widetilde{S}(P,t) = 0$ if $\dim P =0$. In fact, if $\dim P \geq 0$, then $\widetilde{S}(P,0)=0$ since a $h^*$- and a $g$-polynomial have constant coefficient $1$ and since $P$ has an equal number of even- and odd-dimensional faces.
\item[(2)] The coefficients of $\widetilde{S}(P,t)$ are nonnegative. This is again proved using algebraic geometry \cite[Prop.\ 5.5]{BorisovMavlyutov}.
\item[(3)] One has the reciprocity law $\widetilde{S}(P,t) = t^{\dim P +1}\, \widetilde{S}(P,t^{-1})$, see \cite[Rem.\ 5.4]{BorisovMavlyutov}.
\end{itemize}
\end{remark}

%In \cite[Def.\ 2.6]{Schepers} the previous definition was further generalised.

%\begin{definition}
%Let $P$ be a lattice polytope and let $Q$ be a face of $P$. We define the polynomial $\widetilde{S}(P,Q,t)\in \mathbb{Z}[t]$ by
%\[  \widetilde{S}(P,Q,t) = \sum_{Q \leq F \leq P}
%(-1)^{\dim P - \dim F} h_F^*(t)\, g([F,P],t).\]
%\end{definition}

%Note that $\widetilde{S}(P,P,t) = h^*_P(t)$ and $\widetilde{S}(P,\emptyset,t) = \widetilde{S}(P,t)$. The following result was proved in \cite[Cor.\ %2.8]{Schepers} using the relative $g$-polynomials of Braden and MacPherson \cite{BradenMacPherson}.

%\begin{theorem}
%If $Q'\leq Q$ are faces of a lattice polytope $P$, then 
%\[ \widetilde{S}(P,Q',t) \leq  \widetilde{S}(P,Q,t) \]
%(i.e.\ the inequality holds coefficientwise). In particular, $\widetilde{S}(P,Q,t)$ has nonnegative coefficients. 
%\end{theorem}

The following formula follows easily from Stanley's convolution property. See \cite[Prop.\ 2.9]{Schepers} for a slightly more general version.

\begin{proposition}
For any lattice polytope $P$ we have
\[h_P^*(t) = \widetilde{S}(P,t) + \sum_{\emptyset \leq F < P} \widetilde{S}(F,t)\,g([F,P]^*,t).\]
\end{proposition}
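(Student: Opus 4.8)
The plan is to prove the identity by starting from the definition of $\widetilde{S}(P,t)$ and substituting it, together with the $\widetilde{S}(F,t)$ appearing on the right-hand side, by their defining sums over faces, and then reorganising the resulting double (triple) sum so that the $g$-polynomial contributions collapse via Stanley's convolution property (Remark~\ref{remarkg}(2)). Concretely, I would expand the right-hand side as
\[
\widetilde{S}(P,t) + \sum_{\emptyset \leq F < P} \widetilde{S}(F,t)\, g([F,P]^*,t)
= \sum_{\emptyset \leq F \leq P} \widetilde{S}(F,t)\, g([F,P]^*,t),
\]
using that $g([P,P]^*,t) = g(\{\hat 0\},t) = 1$, so that the first term $\widetilde{S}(P,t)$ is just the $F=P$ summand. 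This makes the right-hand side a single uniform sum over all faces $F$ of $P$.

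Next I would insert $\widetilde{S}(F,t) = \sum_{\emptyset \leq G \leq F} (-1)^{\dim F - \dim G} h_G^*(t)\, g([G,F],t)$ and interchange the order of summation, grouping by the innermost face $G$:
\[
\sum_{\emptyset \leq F \leq P} \widetilde{S}(F,t)\, g([F,P]^*,t)
= \sum_{\emptyset \leq G \leq P} h_G^*(t) \sum_{G \leq F \leq P} (-1)^{\dim F - \dim G} g([G,F],t)\, g([F,P]^*,t).
\]
The inner sum runs over the interval $[G,P]$ in the face poset, which is itself an Eulerian poset; writing $\rho$ for its rank function, $(-1)^{\dim F - \dim G} = (-1)^{\rho(F) - \rho(G)}$, and the inner sum is exactly an instance of Stanley's convolution property applied to the Eulerian poset $[G,P]$, with $[G,F]$ playing the role of $[\hat 0, x]$ and $[F,P]$ that of $[x,\hat 1]$. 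Hence the inner sum vanishes whenever $[G,P]$ has positive rank, i.e.\ whenever $G < P$, and equals $1$ when $G = P$ (the rank-zero case, where the convolution is the single term $g(\{\hat 0\},t)\cdot g(\{\hat 0\},t) = 1$). Therefore the entire right-hand side collapses to $h_P^*(t)$, which is the claim.

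The only point requiring a little care is matching the precise form of the convolution identity in Remark~\ref{remarkg}(2) — there are two versions there, one with $g([\hat 0,x],t)\, g([x,\hat 1]^*,t)$ and one with $g([\hat 0,x]^*,t)\, g([x,\hat 1],t)$, differing by where the dualisation $*$ sits — so I would check that the second form is the one that lines up with the signs $(-1)^{\rho(\hat 1)-\rho(x)}$ versus $(-1)^{\rho(x)-\rho(\hat 0)}$ as written, and that the interval $[G,P]$ in the face poset of $P$ is indeed Eulerian (which follows since $P$'s face poset is Eulerian and intervals in Eulerian posets are Eulerian). I expect this bookkeeping — rather than any genuine difficulty — to be the main obstacle; once the summations are in the right order, the vanishing is immediate from the cited property.
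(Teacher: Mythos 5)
Your proof is correct and is exactly the argument the paper has in mind: the paper offers no proof beyond the remark that the identity ``follows easily from Stanley's convolution property'' (citing a more general version in [Sch, Prop.\ 2.9]), and your expand--swap--convolve computation is precisely that argument. The sign bookkeeping you flag does resolve: the two forms of the convolution identity differ only by the global factor $(-1)^{\rho(\hat{1})-\rho(\hat{0})}$, so the first form (with the dual on the second factor) applies to your inner sum over $[G,P]$ after multiplying by the constant $(-1)^{\dim P-\dim G}$, which does not affect its vanishing for $G<P$ nor the value $1$ for $G=P$.
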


Recall that all polynomials in the above formula have nonnegative integer coefficients. 

By the \textit{subdegree} of a nonzero univariate polynomial $f$ we mean the degree of the lowest degree nonzero term. We denote it by subdeg\,$f$. If for a lattice polytope $P$ the polynomial $\widetilde{S}(P,t)$ is nonzero, then subdeg\,$\widetilde{S}(P,t) = \dim P+1-\deg \widetilde{S}(P,t)$ by Remark~\ref{remarkStilde} (3). The following corollary is immediate.

\begin{corollary}\label{corollaryStilde} 
Let $P$ be a lattice polytope. Then
\begin{itemize}
\item[(1)] $\widetilde{S}(P,t)\leq h_P^*(t)$ (i.e.\ the inequality holds coefficientwise),
\item[(2)] if $\widetilde{S}(P,t) \neq 0$ then $\deg \widetilde{S}(P,t) \leq \deg P$ and \textnormal{codeg}\,$P \leq $ \textnormal{subdeg}\,$\widetilde{S}(P,t)$.
\end{itemize} 
In particular, if $\deg P < \textnormal{codeg}\,P$ then $\widetilde{S}(P,t) = 0$.
\end{corollary}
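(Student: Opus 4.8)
The plan is to read everything off the preceding proposition
\[ h_P^*(t) = \widetilde{S}(P,t) + \sum_{\emptyset \leq F < P} \widetilde{S}(F,t)\,g([F,P]^*,t), \]
together with the fact, recalled immediately after it, that all polynomials occurring on the right-hand side have nonnegative integer coefficients. Rearranging gives $\widetilde{S}(P,t) = h_P^*(t) - \sum_{\emptyset \leq F < P} \widetilde{S}(F,t)\,g([F,P]^*,t)$; since each summand $\widetilde{S}(F,t)\,g([F,P]^*,t)$ is a product of polynomials with nonnegative coefficients, it has nonnegative coefficients, and hence $\widetilde{S}(P,t) \leq h_P^*(t)$ coefficientwise. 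This is part (1).

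For part (2), I would assume $\widetilde{S}(P,t) \neq 0$. Since $\widetilde{S}(P,t)$ and $h_P^*(t)$ both have nonnegative coefficients and $\widetilde{S}(P,t) \leq h_P^*(t)$, any exponent carrying a nonzero coefficient of $\widetilde{S}(P,t)$ also carries one of $h_P^*(t)$; applying this to the top exponent yields $\deg\widetilde{S}(P,t) \leq \deg h_P^*(t) = \deg P$. For the codegree bound I would invoke the reciprocity law of Remark~\ref{remarkStilde}(3), i.e.\ the identity $\textnormal{subdeg}\,\widetilde{S}(P,t) = \dim P + 1 - \deg\widetilde{S}(P,t)$ recorded just above the corollary; combined with $\deg\widetilde{S}(P,t) \leq \deg P$ this gives $\textnormal{subdeg}\,\widetilde{S}(P,t) \geq \dim P + 1 - \deg P = \textnormal{codeg}\,P$.

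Finally, the last assertion follows by contradiction: if $\deg P < \textnormal{codeg}\,P$ while $\widetilde{S}(P,t) \neq 0$, then part (2) forces $\textnormal{codeg}\,P \leq \textnormal{subdeg}\,\widetilde{S}(P,t) \leq \deg\widetilde{S}(P,t) \leq \deg P$, contradicting $\deg P < \textnormal{codeg}\,P$; hence $\widetilde{S}(P,t) = 0$. I do not expect any real obstacle here — the corollary genuinely is immediate from the proposition and the nonnegativity statements; the only points worth a quick sanity check are the degenerate cases $P = \emptyset$ (where $\widetilde{S}(P,t) = 1$) and $\dim P = 0$ (where $\widetilde{S}(P,t) = 0$, so the hypothesis of (2) is vacuous), both consistent with the conventions in Remark~\ref{remarkStilde}(1).
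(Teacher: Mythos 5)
Your argument is correct and is exactly the one the paper intends: the authors declare the corollary ``immediate'' from the decomposition $h_P^*(t) = \widetilde{S}(P,t) + \sum_{F<P}\widetilde{S}(F,t)\,g([F,P]^*,t)$, the nonnegativity of all terms, and the subdegree identity, which is precisely what you spell out. No gaps; the edge-case checks for $\emptyset$ and $\dim P = 0$ are a sensible bonus.
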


It can well happen that $\widetilde{S}(P,t)= 0$ while $\deg P \geq \textnormal{codeg}\,P$. For instance, if $P$ is a lattice pyramid over a facet $F$ then $\widetilde{S}(P,t) = 0$, while $h_P^*(t) = h_F^*(t)$, see \cite[Lemma 4.5]{BatyrevNill}. 

\section{The stringy $E$-function of a Gorenstein polytope}\label{sectionproof}

In this section we prove part of the conjecture of Batyrev and the first named author: the stringy $E$-function of a Gorenstein polytope is a polynomial. We start by discussing the definition of the stringy $E$-function.

\begin{definition}\label{stringyE} \cite[Def.\ 4.8]{BatyrevNill}
Let $P$ be a $d$-dimensional Gorenstein polytope of index $r$. The \textit{stringy $E$-function} is defined by the formula
\[ E_{st}(P;u,v) := \frac{1}{(uv)^r} \, \sum_{\emptyset \leq F \leq P} (-u)^{\dim F + 1}\,\widetilde{S}(F,u^{-1}v)\, \widetilde{S}(F^{*},uv)\quad \in \mathbb{Q}(u,v),\]
where $F^{*}$ is the face of $P^{\times}$ corresponding to $F$.
\end{definition}

As explained in the introduction, this looks like a generating function of stringy Hodge numbers of Calabi-Yau varieties of dimension $d+1 - 2r$. We use plural here, since it is not hard to give examples of polynomial stringy $E$-functions with an arbitrary power of $2$ as leading coefficient and this coefficient would in the geometric setting be precisely the number of irreducible components \cite[Rem.\ 4.11 and 4.22]{BatyrevNill}. See also Question~\ref{leading}. The number $d+1-2r$ is called the \textit{Calabi-Yau dimension} of $P$ and is denoted by CYdim\,$P$. Taking a lattice pyramid over $P$ decreases the Calabi-Yau dimension by 1 and so it can be arbitrarily negative \cite[Rem.\ 4.9 (4)]{BatyrevNill}. We also have
\begin{equation*}\label{CYdim}
\textnormal{CYdim}\,P = \deg P - \textnormal{codeg}\,P. 
\end{equation*}
The Calabi-Yau dimension of $P$ is equal to the Calabi-Yau dimension of its dual $P^{\times}$.

The properties of the stringy $E$-function are summarised in the following proposition \cite[Rem.\ 4.9 (1) and (2)]{BatyrevNill}. They can be proved by using the properties of the $\widetilde{S}$-polynomial (Remark~\ref{remarkStilde}).

\begin{proposition}\label{stringyproperties}
Let $P$ be a Gorenstein polytope of Calabi-Yau dimension $n$. Then
\begin{itemize}
\item[(1)] $E_{st}(P;u,v) = E_{st}(P;v,u)$,
\item[(2)] $(uv)^n\, E_{st}(P;u^{-1},v^{-1}) = E_{st}(P; u,v)$ (Poincar\'e duality),
\item[(3)] $E_{st}(P;u,v) = (-u)^n\, E_{st}(P^{\times};u^{-1},v)$ (mirror symmetry law, see (\ref{mirrorsymm})).
\end{itemize}
\end{proposition}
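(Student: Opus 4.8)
The plan is to deduce all three identities directly from Definition~\ref{stringyE} by a term-by-term comparison of the sums over faces. The only non-formal inputs are the reciprocity law $\widetilde{S}(F,t) = t^{\dim F+1}\,\widetilde{S}(F,t^{-1})$ from Remark~\ref{remarkStilde}(3), the dimension identity $\dim F + \dim F^{*} = d-1$ for the face duality, and the relation $n = d+1-2r$, so that $(\dim F+1)+(\dim F^{*}+1) = n+2r$ for every face $F$ of $P$. Substituting a rational function for $t$ in the reciprocity law is legitimate since it is an identity of polynomials, and the cases $\widetilde{S}(F,t)=0$ cause no trouble.

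For part (1), I would take a single term of $E_{st}(P;u,v)$ and apply reciprocity to the factor $\widetilde{S}(F,u^{-1}v)$, which gives $\widetilde{S}(F,u^{-1}v) = (u^{-1}v)^{\dim F+1}\,\widetilde{S}(F,v^{-1}u)$ and hence $(-u)^{\dim F+1}\,\widetilde{S}(F,u^{-1}v) = (-v)^{\dim F+1}\,\widetilde{S}(F,v^{-1}u)$. Since the remaining factors $\widetilde{S}(F^{*},uv)$ and $(uv)^{-r}$ are already symmetric in $u$ and $v$, this turns the $F$-term of $E_{st}(P;u,v)$ into the $F$-term of $E_{st}(P;v,u)$, and summing over $F$ proves the symmetry.

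For part (2), I would write out $E_{st}(P;u^{-1},v^{-1})$, apply reciprocity to both $\widetilde{S}(F,\cdot)$ and $\widetilde{S}(F^{*},\cdot)$ in each term, and collect the resulting monomial prefactor; using $(\dim F+1)+(\dim F^{*}+1)=n+2r$ one finds that this prefactor is exactly $(uv)^{-n}$ times the prefactor occurring in the $F$-term of $E_{st}(P;u,v)$, which is the assertion. Part (3) is the same computation carried out on $E_{st}(P^{\times};u^{-1},v)$, once one records that $P^{\times}$ is again a $d$-dimensional Gorenstein polytope of index $r$ (hence of Calabi-Yau dimension $n$) and that $F\mapsto F^{*}$ is an involutive, order-reversing bijection between the faces of $P$ and those of $P^{\times}$; reindexing the sum over faces $G$ of $P^{\times}$ by $F:=G^{*}$, substituting $u\mapsto u^{-1}$, and invoking reciprocity once more matches the two sums term by term. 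The sign bookkeeping uses $\dim F+\dim F^{*}=d-1\equiv n\pmod 2$, which makes $(-1)^{\dim F+1}$ and $(-1)^{\,n+\dim F^{*}+1}$ coincide.

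I do not expect a substantial obstacle: each part reduces to elementary exponent and sign arithmetic as soon as reciprocity is available. The one point that deserves care is keeping the monomial prefactors straight, and — for part (3) — justifying that passing to $P^{\times}$ genuinely preserves the dimension, the index, and the involutivity of the face duality, which is precisely what the duality of reflexive Gorenstein cones in Section~\ref{subsectionGorenstein} provides.
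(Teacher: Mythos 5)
Your argument is correct and is exactly the route the paper intends: the paper does not write out a proof but points to the properties of the $\widetilde{S}$-polynomial in Remark~\ref{remarkStilde}, and the whole content is the term-by-term application of the reciprocity law $\widetilde{S}(F,t)=t^{\dim F+1}\widetilde{S}(F,t^{-1})$ together with $\dim F+\dim F^{*}=d-1$ and $n=d+1-2r$, as you do. Your exponent and sign bookkeeping checks out in all three parts, including the parity observation $\dim F-\dim F^{*}\equiv n \pmod 2$ needed for (3).
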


The precise statement of the conjecture of Batyrev and the first named author is as follows \cite[Conj.\ 4.10]{BatyrevNill}. We have added point (4). 

\begin{conjecture}\label{conjectureBatyrevNill}
Let $P$ be a Gorenstein polytope of Calabi-Yau dimension $n$. 
\begin{itemize}
\item[(1)] Then $E_{st}(P;u,v)$ is a polynomial,
\item[(2)] it is of degree $2n$ or it is $0$ (in particular, it is $0$ if $n<0$),
\item[(3)] for $n\geq 1$ one has $E_{st}(P;u,0) = (-u)^n\, E_{st}(P;u^{-1},0)$,
\item[(4)] $\frac{d}{du}E_{st}(P;u,1)|_{u=1} = \frac{n}{2}\, E_{st}(P;1,1)$,
\item[(5)] and $\frac{d^2}{du^2}E_{st}(P;u,1)|_{u=1} = \frac{n(3n-5)}{12}\, E_{st}(P;1,1)$. 
\end{itemize}
\end{conjecture}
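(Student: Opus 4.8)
I aim at part~(1), that $E_{st}(P;u,v)$ is a polynomial; parts~(2)--(5) are sharper and I do not expect them to come out of this argument. Write
\[ N(u,v) := \sum_{\emptyset \leq F \leq P} (-u)^{\dim F + 1}\,\widetilde S(F,u^{-1}v)\,\widetilde S(F^{*},uv), \]
so that $E_{st}(P;u,v) = N(u,v)/(uv)^{r}$, and I analyse $N$ one face at a time. For a face $F$ with $\widetilde S(F,t)\neq 0$ put $\alpha_{F} := \mathrm{subdeg}\,\widetilde S(F,t)$ and $\beta_{F} := \deg\widetilde S(F,t)$; by the reciprocity law (Remark~\ref{remarkStilde}(3)) we have $\alpha_{F}+\beta_{F}=\dim F+1$, and by Corollary~\ref{corollaryStilde}(2) we have $\mathrm{codeg}\,F\leq\alpha_{F}\leq\beta_{F}\leq\deg F$. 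Then $(-u)^{\dim F+1}\widetilde S(F,u^{-1}v)$ equals $\pm\sum_{j}s_{j}\,u^{\dim F+1-j}v^{j}$ with $j$ running over $[\alpha_{F},\beta_{F}]$, so its $u$-exponents $\dim F+1-j$ again lie in $[\alpha_{F},\beta_{F}]$; in particular they are nonnegative. Multiplying by $\widetilde S(F^{*},uv)=\sum_{k}s'_{k}(uv)^{k}$ with $k\in[\alpha_{F^{*}},\beta_{F^{*}}]$, the contribution of $F$ to $N$ is a genuine polynomial, every monomial $u^{a}v^{b}$ of which satisfies $a\geq\alpha_{F}+\alpha_{F^{*}}$ and $b\geq\alpha_{F}+\alpha_{F^{*}}$, hence $a,b\geq\mathrm{codeg}\,F+\mathrm{codeg}\,F^{*}$. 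Therefore $N\in\mathbb{Z}[u,v]$, and since cancellation can only delete monomials, $u^{r}v^{r}$ divides $N$ -- that is, $E_{st}(P;u,v)$ is a polynomial -- as soon as one proves
\[ \mathrm{codeg}\,F+\mathrm{codeg}\,F^{*}\;\geq\;r\qquad\text{for every face }F\text{ with }\widetilde S(F,t)\neq 0\neq\widetilde S(F^{*},t). \]

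This is the statement I expect Theorem~\ref{maintheorem} to deliver. It holds with equality for $F=\emptyset$ and $F=P$ (using that the dual of a Gorenstein polytope of index $r$ again has codegree $r$), and it is automatic for \emph{all} faces when $r\leq 2$, since $\mathrm{codeg}\,F\geq 1$ for every nonempty lattice polytope; so the content lies in higher index and in ``interior'' faces. I would prove it inside the reflexive Gorenstein cone $C=C_{P}$ of dimension $d+1$ and index $r$: a face $F\leq P$ corresponds to a face $\tau\leq C$ and the dual face $F^{*}$ to the dual face $\tau^{\vee}\leq C^{\vee}$, the ray generators of $\tau$ lie on $\{\langle n_{C},\cdot\rangle=1\}$ and those of $\tau^{\vee}$ on $\{\langle\cdot,m_{C^{\vee}}\rangle=1\}$, one has $\langle n_{C},m_{C^{\vee}}\rangle=r$, and $\mathrm{codeg}\,F$ (respectively $\mathrm{codeg}\,F^{*}$) is the smallest value of $\langle n_{C},\cdot\rangle$ on a lattice point of $\mathrm{relint}\,\tau$ (respectively of $\langle\cdot,m_{C^{\vee}}\rangle$ on a lattice point of $\mathrm{relint}\,\tau^{\vee}$). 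The idea would be to take minimal interior lattice points $p\in\mathrm{relint}\,\tau$ and $q\in\mathrm{relint}\,\tau^{\vee}$ -- which are orthogonal, $\langle q,p\rangle=0$ -- and play them off against the distinguished points $m_{C^{\vee}}\in\mathrm{int}\,C$ and $n_{C}\in\mathrm{int}\,C^{\vee}$ so as to force the two heights to add to at least $r$.

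The main obstacle is precisely this last step. A crude pairing gives only $\mathrm{codeg}\,F+\mathrm{codeg}\,F^{*}\geq 2$, and one must use the hypothesis $\widetilde S(F,t)\neq 0\neq\widetilde S(F^{*},t)$ in an essential way: it rules out the lattice-pyramid-type degenerations of $F$ and of $F^{*}$ (recall that $\widetilde S$ vanishes on lattice pyramids), and should force $\mathrm{relint}\,\tau$ and $\mathrm{relint}\,\tau^{\vee}$ to contain lattice points that are, relative to $C$ and $C^{\vee}$, ``spread out'' enough for the height estimate to be sharp. Turning the purely combinatorial condition ``$\widetilde S(F,t)\neq 0$'' into this lattice-geometric control on the cone over $F$, and doing it compatibly for $F$ and $F^{*}$ at once, is the real heart of the matter; everything preceding the displayed inequality is bookkeeping with the degree and codegree bounds for $\widetilde S$.
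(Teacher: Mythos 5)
Your reduction of part (1) to the inequality $\mathrm{codeg}\,F+\mathrm{codeg}\,F^{*}\geq r$ is correct and is essentially the paper's own bookkeeping: Corollary~\ref{proofofconjecture} organizes the same exponent count as a product $A_1(F)\,A_2(F)$ of two Laurent polynomials and shows each is a genuine polynomial. But the displayed inequality is precisely Theorem~\ref{maintheorem}, and you do not prove it; your sketch stops at ``play them off against $m_{C^{\vee}}$ and $n_{C}$'', which is where all the work lies. Moreover, your diagnosis of what is needed is off: the inequality holds for \emph{every} face $F$, with no hypothesis $\widetilde S(F,t)\neq 0\neq\widetilde S(F^{*},t)$ whatsoever, so the effort you propose to spend on converting the nonvanishing of $\widetilde S$ into lattice-geometric control is misdirected. (That hypothesis only enters, via Corollary~\ref{corollaryStilde}, in the separate argument that $E_{st}=0$ when $n<0$, which you do not address.)

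The actual proof of Theorem~\ref{maintheorem} is your ``crude pairing'' pushed one step further by an integrality observation. With lattice points $f\in\mathrm{relint}(sF)$ and $f'\in\mathrm{relint}(s'F^{*})$ one has $r-s-s'=\langle n-f',m-f\rangle$, and one expands $n-f'=\sum_{u\in U}\alpha_u\,u$ over the vertices $u$ of $P^{\times}$ (lifted to $C_P^{\vee}$) and $m-f=\sum_{v\in V}\beta_v\,v$ over the vertices of $P$; the coefficients are positive on the vertices not lying on $F^{*}$, respectively not on $F$. The decisive point is that if $u$ is a vertex of $P^{\times}$ not on $F^{*}$, then $\langle u,f\rangle>0$, and since $f$ is a \emph{lattice} point this forces $\langle u,f\rangle\geq 1=\langle u,m\rangle$, i.e.\ $\langle u,m-f\rangle\leq 0$ (and symmetrically for $n-f'$ against vertices of $P$ off $F$). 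An inclusion--exclusion over $U_0\sqcup U_1$ and $V_0\sqcup V_1$ then writes $\langle n-f',m-f\rangle$ as a sum of three nonpositive terms minus a nonnegative one, giving $r-s-s'\leq 0$. Without this step --- which is exactly what upgrades the trivial bound $\mathrm{codeg}\,F+\mathrm{codeg}\,F^{*}\geq 2$ to the sharp bound $r$ --- your argument for (1) is incomplete. Parts (2)--(5) you rightly leave aside; the paper likewise establishes only (1) and the vanishing for $n<0$, the rest remaining conjectural.
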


\begin{remark}
${}$
\begin{itemize}
\item[(1)] If $E_{st}(P;u,v)$ is a polynomial, then it follows from Proposition~\ref{stringyproperties} and Remark~\ref{remarkStilde} that it is of the form
\[ E_{st}(P;u,v) = \sum_{p,q=0}^n (-1)^{p+q} \, h_{st}^{p,q}(P) \, u^pv^q\]
where the $h_{st}^{p,q}(P)$ are \textit{nonnegative integers}. We mention that there is a fascinating conjecture of Batyrev about the nonnegativity of the coefficients of polynomial stringy $E$-functions in algebraic geometry \cite[Conj.\ 3.10]{Batyrev}.
\item[(2)] The last three statements are inspired by algebraic geometry: a generating function of stringy Hodge numbers of Calabi-Yau varieties should satisfy (3) by analogy with Serre duality for smooth varieties with trivial canonical class. For the ideas behind (4) and (5) we refer to \cite{BatyrevVirasoro}. We have added (4) to the conjecture since it is required after combining (5) and the multiplicative behaviour with respect to $\mathbb{Z}$-joins (see Section~\ref{joins} and in particular Proposition~\ref{stringyjoin}). 
\item[(3)] It was noted in \cite[Cor.\ 4.18]{BatyrevNill} that constructions as in the introduction can be used to prove the conjecture in the case where $P$ or $P^{\times}$ is a Cayley polytope of length $r$, where $r$ is the index of $P$ (see the definition below). For a related recent paper on stringy $E$-functions of Gorenstein polytopes in the case of $r=2$, see \cite{DoranNovoseltsev}.
\end{itemize}
\end{remark}

\begin{definition}\label{Cayley}
Let $M$ be a lattice and let $P_1,\ldots ,P_r$ be lattice polytopes in $M_{\mathbb{R}}$. We consider the lattice $\overline{M} := M\oplus \mathbb{Z}^r$ and we identify $P_i$ with $\overline{P_i} := P_i\times \{e_i\}$ in $\overline{M}_{\mathbb{R}}$, where $e_i$ is the $i$-th standard basis vector in $\mathbb{Z}^r$. The convex hull of $\overline{P_1},\ldots , \overline{P_r}$ is called the \textit{Cayley polytope} associated to $P_1,\ldots, P_r$. It is denoted by $P_1 * \cdots * P_r$. It is a lattice polytope with respect to the affine sublattice $M \times \{x_1 + \cdots + x_r = 1\}$ of $\overline{M}$. We will consider it as a lattice polytope with respect to $M\oplus \mathbb{Z}^{r-1}$ after deleting the last coordinate $x_r$. The cone 
\[  \mathbb{R}_{\geq 0} ( P_1 * \cdots * P_r) = \mathbb{R}_{\geq 0} \overline{P_1} + \cdots + \mathbb{R}_{\geq 0} \overline{P_r} \subset \overline{M}_{\mathbb{R}}    \]
is called the associated \textit{Cayley cone}. 
\end{definition}

Now we come to our main result. It will imply that the stringy $E$-function is a polynomial.

\begin{theorem}\label{maintheorem}
Let $P$ be a Gorenstein polytope of index $r$. Let $F$ be a face of $P$ and $F^{*}$ the face of $P^{\times}$ corresponding to $F$. Then
\[  r \leq \textnormal{codeg}\, F + \textnormal{codeg}\, F^{*}.      \]
%Moreover, if equality holds, then $F$ and $F^{*}$ are Gorenstein and $P$ is isomorphic to the join of $F$ and $(F^{*})^*$. 
\end{theorem}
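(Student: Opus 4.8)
The plan is to relate the codegrees of $F$ and $F^*$ to the combinatorics of the Gorenstein cone $C = C_P$ of index $r$ sitting in $M_{\mathbb{R}} \oplus \mathbb{R}$, using the correspondence between faces of $P$ and faces of $C$. A face $F \leq P$ corresponds to a face $\sigma_F \leq C$ with $\dim \sigma_F = \dim F + 1$, and dually $F^*$ corresponds to $\sigma_F^{\vee} \leq C^{\vee}$ with $\dim \sigma_F^{\vee} = (d+1) - \dim \sigma_F$. The codegree of $F$ (respectively $F^*$) should be read off from where a relative interior lattice point first appears in the cone $\sigma_F$ (respectively $\sigma_F^{\vee}$), measured against the linear functionals $n_C$ and $m_{C^{\vee}}$ that define the slicing structure. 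Concretely, I expect that $\mathrm{codeg}\,F$ equals the smallest positive value of $\langle n_C, \cdot\rangle$ on an interior lattice point of $\sigma_F$, and symmetrically $\mathrm{codeg}\,F^*$ is governed by $\langle \cdot, m_{C^{\vee}}\rangle$ on $\sigma_F^{\vee}$.

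First I would make precise the translation between Ehrhart-theoretic codegree and cone interior points: for a lattice polytope $Q$ of dimension $e$, $\mathrm{codeg}\,Q$ is the least $k > 0$ such that $kQ$ has an interior lattice point, which translates to saying that the cone $C_Q$ over $\overline{Q} = Q \times \{1\}$ has a lattice point in its relative interior lying on the hyperplane at height $k$. Applying this to $F$ via $\sigma_F \subseteq C$ (noting $\sigma_F$ is itself a Gorenstein cone with $n_C$ restricted to it) gives: there is a lattice point $m_F$ in the relative interior of $\sigma_F$ with $\langle n_C, m_F\rangle = \mathrm{codeg}\,F$. Similarly, there is a lattice point $n_{F^*}$ in the relative interior of $\sigma_F^{\vee} \subseteq C^{\vee}$ with $\langle n_{F^*}, m_{C^\vee}\rangle = \mathrm{codeg}\,F^*$ (using that the support-polytope height function on $C^\vee$ is pairing with $m_{C^\vee}$, and $\mathrm{codeg}\,F^*$ is the first height with an interior lattice point of the cone over $F^*$).

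The key step is then a pairing estimate. Since $m_F$ lies in the relative interior of $\sigma_F$ and $n_{F^*}$ lies in the relative interior of the dual face $\sigma_F^{\vee}$, and because $\langle n_{F^*}, x\rangle = 0$ for all $x \in \sigma_F$ while $n_{F^*}$ is interior to $\sigma_F^\vee$, I want to show $\langle n_C, m_F\rangle + \langle n_{F^*}, m_{C^\vee}\rangle \geq \langle n_C, m_{C^\vee}\rangle = r$. The cleanest route is to consider the lattice point $n_C - n_{F^*} \in C^\vee$ (or rather to check it lies in $C^\vee$ using that $n_{F^*}$ vanishes on $\sigma_F$ and is small enough elsewhere) — more robustly, write $m_{C^\vee} = m_F + m'$ where one argues $m'$ pairs nonnegatively with $n_C$ and $n_{F^*}$ pairs suitably; but the honest mechanism is that $m_{C^\vee}$ is the unique interior point of $C$ at height $r$, and $n_{F^*} \in C^\vee$ gives $\langle n_{F^*}, m_{C^\vee} - m_F\rangle \geq \langle n_{F^*}, m_F\rangle$-type bounds since both are in $C$ and $n_{F^*} \in C^\vee$. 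Chaining: $r = \langle n_C, m_{C^\vee}\rangle$, and I split $n_C$ across the face–dual-face decomposition so that $\langle n_C, m_{C^\vee}\rangle \leq \langle n_C, m_F\rangle + \langle n_{F^*}, m_{C^\vee}\rangle$ using that $m_F$ is interior to $\sigma_F$, $m_{C^\vee}$ is interior to $C$, and the product of relative-interior conditions forces the pairing inequality — this is where I expect to need a small convexity/duality lemma about Gorenstein cones.

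**The hard part will be** the pairing inequality in the third paragraph: making rigorous that "interior point of a face, paired against interior point of the dual face of the dual cone, adds up to at least the index." One must be careful that $\sigma_F$ need not be a reflexive Gorenstein cone on its own (its index could differ), so the bound $\mathrm{codeg}\,F = \langle n_C, m_F\rangle$ for the chosen $m_F$ is really just an inequality $\mathrm{codeg}\,F \leq \langle n_C, m_F\rangle$ realized by the optimal point, which is exactly the direction I need. The subtlety is choosing $m_F$ and $n_{F^*}$ compatibly — interior to $\sigma_F$ and $\sigma_F^\vee$ respectively — and then invoking that $m_F + (\text{something in } \sigma_F^{\vee\text{-perp direction}})$ can be pushed to an interior point of $C$, whose height is at least $r$ by the Gorenstein (reflexivity) property of $C$; any interior lattice point of $C$ has height $\geq r$ since $m_{C^\vee}$ at height $r$ is the unique one of minimal positive height. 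I'd set this up as a short lemma: if $\tau \leq C$ is a face of a Gorenstein cone $C$ of index $r$, $m$ is interior to $\tau$, and $n'$ is interior to $\tau^\vee \leq C^\vee$, then $\langle n_C, m\rangle \cdot \langle n', m_{C^\vee}\rangle \geq$ ... — actually the additive form $\langle n_C,m\rangle + \langle n', m_{C^\vee}\rangle \geq r$ via producing an interior point of $C$ of the form $m + $ (lift of $n'$ through duality), whose $n_C$-height is then $\geq r$ and decomposes as the two terms. Verifying that such a lift lands in the interior of $C$ — not just $C$ — is the technical crux.
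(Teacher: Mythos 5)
Your setup matches the paper's: you pass to the Gorenstein cone $C=C_P$, pick a lattice point $f$ in the relative interior of $sF$ (so $\langle n_C,f\rangle=s=\operatorname{codeg}F$) and a lattice point $f'$ in the relative interior of $s'F^*$ (so $\langle f',m_{C^\vee}\rangle=s'=\operatorname{codeg}F^*$), note $\langle f',f\rangle=0$, and reduce everything to the single pairing inequality
\[
r-s-s'=\langle n_C-f',\,m_{C^\vee}-f\rangle\le 0 .
\]
That reduction is correct and is exactly where the paper also arrives. But you then stop: none of the mechanisms you float actually proves this inequality, and you yourself flag it as the unresolved ``technical crux.'' Concretely, $n_C-f'$ need \emph{not} lie in $C^\vee$ (for a vertex $v$ of $P$ outside $F$ one has $\langle n_C,\overline v\rangle=1$ while $\langle f',\overline v\rangle\ge 1$, so the difference can pair nonpositively with $\overline v$), there is no canonical ``lift of $f'$ through duality'' to a point of $M_\mathbb{R}$ that you could add to $f$, and producing an interior lattice point of $C$ at height exactly $s+s'$ is not something that follows from the definitions of the two codegrees. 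So the heart of the theorem is missing.

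What the paper does at this point is elementary but specific, and your sketch contains none of it. Write $n_C-f'=\sum_{u\in U}\alpha_u u$ and $m_{C^\vee}-f=\sum_{v\in V}\beta_v v$ over the vertex sets $U$ of $P^{\times}$ and $V$ of $P$, where $\alpha_u=\gamma_u>0$ for vertices $u$ outside $F^*$ (and similarly $\beta_v>0$ for $v$ outside $F$), the coefficients on $F^*$ resp.\ $F$ having unknown sign. The decisive step is an \emph{integrality} argument: for a vertex $u\notin F^*$ one has $\langle u,f\rangle>0$, hence $\langle u,f\rangle\ge 1=\langle u,m_{C^\vee}\rangle$ because $f$ is a lattice point, so $\langle u,\,m_{C^\vee}-f\rangle\le 0$; symmetrically for vertices $v\notin F$. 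One then expands the bilinear pairing by inclusion--exclusion over $(U_1\times V)\cup(U\times V_1)\cup(U_0\times V_0)$, using $\langle U_0,V_0\rangle=0$ and $\langle u,v\rangle\ge 0$ throughout, to conclude $\langle n_C-f',m_{C^\vee}-f\rangle\le 0$. Without the integrality step the inequality is false for merely rational interior points, so no purely convex-geometric ``lifting'' lemma of the kind you propose can close the gap; you need to bring the lattice structure into the estimate.
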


This can be formulated equivalently as
\[ \deg F + \deg F^{*} \leq \deg P.\]
Since the degree of a lattice polytope is a measure of its `complexity', this means that $F$ and $F^{*}$ cannot both be `complicated'. The situation where equality holds for a face $F$ will be treated in Theorem~\ref{equality}.

\begin{proof}
Let $P$ be a polytope of dimension $d$. We may assume that it lives in $\widetilde{M}_{\mathbb{R}}$, with $\widetilde{M}$ a lattice of rank $d$. We put $s:=\text{codeg} \, F$ and $s' := \text{codeg}\, F^{*}$. It suffices to consider the case where $\emptyset \neq F \neq P$. We consider the lattice $M := \widetilde{M}\oplus \mathbb{Z}$ and the Gorenstein cone $C_P$ associated to $P$ in $M_{\mathbb{R}}$, i.e.\ the cone generated by the vertices of $\overline{P} := P\times \{1\}$.

Let $N$ be the lattice dual to $M$ and consider the dual cone $C_P^{\vee}$ in $N_{\mathbb{R}}$. The dual Gorenstein polytope $P^{\times}$ is the support polytope of $C_P^{\vee}$. Using the notations from Section~\ref{subsectionGorenstein}, we put $n:=n_{C_P} \in C_P^{\vee}\cap N$ and $m:=m_{C_P^{\vee}}\in C_P\cap M$. So $\langle n , m\rangle = r$. Let $f\in C_P \cap M$ be an interior lattice point of $sF$ considered as face of $s\overline{P}$. Similarly, let $f'\in C_P^{\vee} \cap N$ be an interior lattice point of $s'F^{*}$ considered as a face of $s'P^{\times}$. Then
\[ \langle n,f \rangle = s ,\quad \langle f',m \rangle = s' \quad  \text{and}\quad \langle f',f \rangle = 0.  \] 
It follows that
\[ r-s-s' = \langle n - f', m- f \rangle. \]
We will show that $\langle n - f', m- f \rangle \leq 0$.

Let $U$ be the vertex set of $P^{\times}$, $U_0$ the vertex set of $F^*$, and $U_1=U\setminus U_0$. In the same way we define $V$ as the vertex set of $P$, $V_0$ the vertex set of $F$, and $V_1:=V\setminus V_0$. Then there exist positive real numbers such that
\[ f' = \sum_{u\in U_0} \nu_u\ u, \quad n = \sum_{u\in U} \gamma_u\ u, \quad f = \sum_{v\in V_0} \mu_v\ v, \quad m = \sum_{v\in V} \lambda_v\ v.\]
We put $\nu_u = 0$ if $u\in U_1$ and $\mu_v = 0$ if $v\in V_1$. We define $\alpha_u := \gamma_u - \nu_u$ for all $u\in U$ and $\beta_v := \lambda_v - \mu_v$ for all $v\in V$, so that
\[ n -  f' = \sum_{u\in U} \alpha_u\ u,  \quad m-f = \sum_{v\in V} \beta_v\ v.\]

We claim that
\begin{eqnarray*}
u\in U_1 & \Longrightarrow & \sum_{v\in V} \beta_v\ \langle u,v\rangle \leq 0, \\
v\in V_1 & \Longrightarrow & \sum_{u\in U} \alpha_u\ \langle u,v\rangle \leq 0. \\
\end{eqnarray*}
By symmetry it is enough to show the first implication. So let $u\in U_1$. Since $u\notin F^*$ and since $f$ is an interior lattice point of $sF$, we get $\langle u,f \rangle >0$. Moreover, since $f$ is a lattice point, we get even more:
\[ \langle u, f\rangle \geq 1 = \langle u,m \rangle .\]
Hence, $\langle u, m-f \rangle \leq 0$, which implies the claim.

Now, let us put everything together: 
\[ \langle n - f', m- f \rangle = S_1 + S_2 + S_3 -S_4, \]
where 
\begin{itemize}
\item $S_1$ equals 
\[ \sum_{u\in U_1} \alpha_u \ \left( \sum_{v\in V} \beta_v\ \langle u,v \rangle \right) , \]
where $\alpha_u = \gamma_u > 0$ and the internal sum is $\leq 0$.
\item $S_2$ equals 
\[ \sum_{v\in V_1} \left( \sum_{u\in U} \alpha_u\ \langle u,v \rangle \right) \beta_v , \]
where $\beta_v = \lambda_v > 0$ and the internal sum is $\leq 0$.
\item $S_3$ equals 
\[ \sum_{u\in U_0} \sum_{v\in V_0} \alpha_u\ \beta_v\  \langle u,v\rangle, \]
which equals $0$, since here $\langle u,v \rangle =0$.
\item $S_4$ equals 
\[ \sum_{u\in U_1} \sum_{v\in V_1} \alpha_u\ \beta_v\  \langle u,v\rangle, \]
where $\alpha_u > 0, \beta_v > 0$, and $\langle u,v \rangle \geq 0$.
\end{itemize}
\vspace{-0,5cm}
\end{proof}

\begin{corollary}\label{proofofconjecture}
Let $P$ be a Gorenstein polytope of Calabi-Yau dimension $n$. Then $E_{st}(P;u,v)$ is a polynomial, and $E_{st}(P;u,v) = 0$ if $n< 0$.
\end{corollary}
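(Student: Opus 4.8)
The plan is to deduce the corollary from Theorem~\ref{maintheorem} by analysing, term by term, the defining formula
\[ E_{st}(P;u,v) = \frac{1}{(uv)^r} \, \sum_{\emptyset \leq F \leq P} (-u)^{\dim F + 1}\,\widetilde{S}(F,u^{-1}v)\, \widetilde{S}(F^{*},uv). \]
The only reason this might fail to be a polynomial is the factor $(uv)^{-r}$ in front; so it suffices to show that the numerator sum is divisible by $(uv)^r$, i.e.\ that every monomial $u^av^b$ appearing in it has $a\geq r$ and $b\geq r$. I would fix a face $F$ and examine the contribution $(-u)^{\dim F+1}\,\widetilde{S}(F,u^{-1}v)\,\widetilde{S}(F^{*},uv)$. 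If either $\widetilde{S}(F,t)$ or $\widetilde{S}(F^{*},t)$ vanishes the term is zero and contributes nothing, so assume both are nonzero.

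For a nonzero $\widetilde{S}(F,t)$, Corollary~\ref{corollaryStilde}(2) gives $\mathrm{codeg}\,F \leq \mathrm{subdeg}\,\widetilde{S}(F,t) \leq \deg\widetilde{S}(F,t)\leq\dim F$, and the reciprocity in Remark~\ref{remarkStilde}(3) pins the exponents of the nonzero monomials of $\widetilde{S}(F,t)$ between $\mathrm{codeg}\,F$ and $\dim F+1-\mathrm{codeg}\,F$. Writing $s=\mathrm{codeg}\,F$, $s'=\mathrm{codeg}\,F^{*}$, a typical monomial of $\widetilde{S}(F,u^{-1}v)$ is $u^{-i}v^{i}$ with $s\leq i$, and a typical monomial of $\widetilde{S}(F^{*},uv)$ is $u^{j}v^{j}$ with $s'\leq j$. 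Multiplying in the prefactor $(-u)^{\dim F+1}$, the resulting monomial in the numerator is $\pm u^{\dim F+1-i+j}\,v^{\,i+j}$. The $v$-exponent is $i+j\geq s+s'\geq r$ by Theorem~\ref{maintheorem}, which handles the $v$-side immediately. For the $u$-side I use the upper bound on $i$: since $i\leq \dim F+1-s$, the $u$-exponent satisfies $\dim F+1-i+j\geq s+j\geq s+s'\geq r$, again by Theorem~\ref{maintheorem}. Hence every monomial in the numerator is divisible by $(uv)^r$, and $E_{st}(P;u,v)$ is a polynomial.

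For the vanishing statement when $n<0$, recall $n=\mathrm{CYdim}\,P=\deg P-\mathrm{codeg}\,P$, so $n<0$ means $\deg P<\mathrm{codeg}\,P=r$; by Corollary~\ref{corollaryStilde}, $\widetilde{S}(P,t)=0$ for $P$ itself. But that alone does not kill the sum — one has to see that \emph{every} term drops out. I would instead argue via the degree count: having shown $E_{st}$ is a polynomial, I examine the top-degree behaviour. From the monomial analysis above, a monomial surviving in $E_{st}$ after dividing by $(uv)^r$ has $u$-exponent $\dim F+1-i+j-r$ and $v$-exponent $i+j-r$; summing, the total degree is $\dim F+1+2j-2r-(\text{something})$... more cleanly, I would use Remark~\ref{remarkStilde}(3) again to bound $\deg\widetilde{S}(F,t)\le\dim F+1-\mathrm{codeg}\,F$ and $\deg\widetilde{S}(F^{*},t)\le\dim F^{*}+1-\mathrm{codeg}\,F^{*}$, together with $\dim F+\dim F^{*}=d-1$, to get that the total degree of each numerator term is at most $(\dim F+1)+(\dim F+1-s)+(\dim F^{*}+1-s')-2r = d+2-s-s'-2r+\ldots$; tracking constants carefully this gives total degree $\le 2(d+1-2r)=2n$ for the whole expression, so if $n<0$ there are no allowed monomials and $E_{st}(P;u,v)=0$.

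The main obstacle is the bookkeeping in the last paragraph: getting the degree bound sharp enough to conclude $E_{st}=0$ when $n<0$ requires combining the subdegree bound (lower end of the monomial range) and the degree bound (upper end) of $\widetilde{S}$ \emph{simultaneously} with Theorem~\ref{maintheorem}, and being careful that the division by $(uv)^r$ is exact rather than merely producing a Laurent polynomial. The polynomiality itself is routine once Theorem~\ref{maintheorem} is in hand; it is the interplay of the two ends of the exponent range that needs care, and I would double-check it on a low-dimensional example such as a lattice pyramid, where $\widetilde{S}(P,t)=0$, to make sure the argument really forces the whole sum to vanish.
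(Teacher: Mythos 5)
Your argument for polynomiality is correct and is essentially the paper's own: you bound the $u$- and $v$-exponents of every monomial in the numerator from below by $r$ using $\textnormal{subdeg}\,\widetilde{S}\geq\textnormal{codeg}$, the reciprocity law of Remark~\ref{remarkStilde}~(3), and Theorem~\ref{maintheorem}; the paper does the identical computation, merely packaging the factor $(uv)^{-r}$ as $(uv)^{-(r-\textnormal{codeg}\,F^*)}\cdot(uv)^{-\textnormal{codeg}\,F^*}$ split between the two $\widetilde{S}$-factors.

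The $n<0$ case, however, is left with a genuine gap: your degree count trails off at ``$d+2-s-s'-2r+\ldots$'' and the claimed bound $\leq 2n$ on each term is never actually derived. It \emph{can} be derived, but not from the ingredients you list alone: the total degree of a surviving monomial is $\dim F+1+2j-2r$ with $j\leq\dim F^*+1-s'$, and after substituting $\dim F^*=d-1-\dim F$ the required inequality reduces to $2(r-s')\leq\dim F+1$. Knowing only $r-s'\leq s$ from Theorem~\ref{maintheorem} is not enough, since $s=\textnormal{codeg}\,F$ can be as large as $\dim F+1$; you additionally need that $\widetilde{S}(F,t)\neq 0$ forces $s\leq\textnormal{subdeg}\,\widetilde{S}(F,t)\leq(\dim F+1)/2$ by reciprocity. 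That extra observation closes your argument, but you should note that the paper reaches the conclusion much more directly, with no degree bookkeeping at all: for every face $F$, Theorem~\ref{maintheorem} gives
\[ \deg F+\deg F^*\leq\deg P<\textnormal{codeg}\,P\leq\textnormal{codeg}\,F+\textnormal{codeg}\,F^*, \]
so $\deg F<\textnormal{codeg}\,F$ or $\deg F^*<\textnormal{codeg}\,F^*$, and Corollary~\ref{corollaryStilde} then kills $\widetilde{S}(F,t)$ or $\widetilde{S}(F^*,t)$, hence every single term of the sum vanishes. This is the clean way to see that the whole sum, and not just the $F=P$ term, drops out.
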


\begin{proof}
Assume first that $n<0$. Then by definition we have that $\deg P < \text{codeg}\, P$. Let $F$ be a face of $P$. By Theorem~\ref{maintheorem}
\[ \deg F + \deg F^{*} \leq \deg P < \text{codeg}\, P \leq \text{codeg}\, F + \text{codeg}\, F^{*} \]
and hence $\deg F <  \text{codeg}\, F$ or $\deg F^{*} <  \text{codeg}\, F^{*}$. Corollary~\ref{corollaryStilde} implies that $\widetilde{S}(F,t)= 0$ or $\widetilde{S}(F^{*},t)=0$. It follows that $E_{st}(P;u,v)=0$.

Now let $n\geq 0$. Let $r$ be the index of $P$. We write $E_{st}(P;u,v)$ as $\sum_{\emptyset \leq F\leq P} A_1(F) \, A_2(F)$, where
\begin{eqnarray*}
A_1(F) & = & \frac{(-u)^{\dim F + 1}\,\widetilde{S}(F,u^{-1}v)}{(uv)^{r - \text{codeg}\, F^{*}}} \\
A_2(F)& = &  \frac{\widetilde{S}(F^{*},uv)}{(uv)^{\text{codeg}\, F^{*}}}. 
\end{eqnarray*}
Let $F$ be a face with both $A_1(F)$ and $A_2(F)$ nonzero. Then $A_2(F)$ is a polynomial by Corollary~\ref{corollaryStilde}. By Corollary~\ref{corollaryStilde} any monomial appearing in $A_1(F)$ is of the form $u^kv^l$ with 
\[  k = \dim F + 1 - m  - r + \text{codeg}\, F^* \]
and
\[ l = m  - r + \text{codeg}\, F^*\] 
for some $m$ with $\text{codeg}\, F \leq m \leq \deg F$.
Then
\[ k \geq  \dim F + 1 -\deg F - r + \text{codeg}\, F^* = \text{codeg}\, F - r + \text{codeg}\, F^* \geq 0\]
and
\[ l \geq  \text{codeg}\, F - r + \text{codeg}\, F^* \geq 0 \]
by Theorem~\ref{maintheorem}. Hence $A_1(F)$ is a polynomial as well.
\end{proof}

\section{Joins of Gorenstein polytopes}\label{joins}

In this section we study different kinds of joins of Gorenstein polytopes. This leads to a criterion for when there is a face of a Gorenstein polytope for which equality holds in Theorem~\ref{maintheorem}. We illustrate the definitions and results with many examples.
%As a corollary, we show that the leading coefficient of the stringy $E$-function is $0$ or a power of $2$.

If $S$ is a subset of a vector space $V$ we denote by lin$(S)$ the linear subspace of $V$ spanned by $S$.

\begin{definition}
We say that a full dimensional lattice polytope $P$ in $M_{\mathbb{R}}$ is a \textit{join} of faces $F$ and $G$ if the following conditions hold:
\begin{itemize}
\item[(1)] $P$ is the convex hull of $F$ and $G$,
\item[(2)] $\dim F + \dim G = \dim P - 1$.
\end{itemize}
\end{definition}

\begin{remark} Equivalently, $M_{\mathbb{R}}\oplus \mathbb{R}$ is the direct sum of lin$(F\times \{1\})$ and lin$(G\times \{1\})$, and this splitting induces an isomorphism of $C_P$ and $C_F \oplus C_G$.
\end{remark}

\begin{definition}
Let $F$ and $G$ be faces of a full dimensional lattice polytope $P$ in $M_{\mathbb{R}}$. As in Definition~\ref{Cayley} we consider $F*G$ as a lattice polytope with respect to $M\oplus \mathbb{Z}$. We say that $P$ is a \textit{Cayley join} of $F$ and $G$ if the following conditions hold:
\begin{itemize}
\item[(1)] $\dim F +\dim G =  \dim P -1$,
\item[(2)] $F*G$ and $P$ are isomorphic, where $M$ is embedded in $M\oplus \mathbb{Z}$ as $M\times \{0\}$.
\end{itemize}
\end{definition}

\begin{remark}\label{separating}
If we use $x$ to denote the last coordinate in $M_{\mathbb{R}}\oplus \mathbb{R}$, then obviously the face $F*\emptyset$ of $F*G$ lies in the hyperplane $\{x=1\}$ and the face $\emptyset*G$ lies in $\{x = 0\}$. If $P$ is a Cayley join of $F$ and $G$ then it follows from (2) that there exists a lattice point $u$ in the dual lattice $N$ of $M$ and an integer $\delta$ such that 
\[ F \subset \{ v\in M \,|\, \langle u,v \rangle = \delta \} \quad \text{and} \quad G \subset \{ v\in M \,|\, \langle u,v \rangle = \delta - 1 \}.\]
\end{remark}

\begin{definition}
Let $F$ and $G$ be faces of a full dimensional lattice polytope $P$ in $M_{\mathbb{R}}$. We denote by $M(F)$ the sublattice $\text{lin}(F\times \{1\}) \cap (M \oplus \mathbb{Z})$ of $M\oplus \mathbb{Z}$, and similarly for $G$. We say that $P$ is a \textit{$\mathbb{Z}$-join} of $F$ and $G$ if the following conditions hold:
\begin{itemize}
\item[(1)] $P$ is the convex hull of $F$ and $G$,
\item[(2)] the natural map $M(F) \oplus M(G) \to M \oplus \mathbb{Z}$ is an isomorphism.
\end{itemize}
\end{definition}

\begin{remark}\label{RemarkJoins} Several remarks are in order.
\begin{itemize}
\item[(1)] It is clear that a Cayley join is a join. A $\mathbb{Z}$-join is a Cayley join. This can be seen as follows. We embed $P$ as $P\times \{1\}$ in $(M\oplus \mathbb{Z})_{\mathbb{R}}$ and we choose a basis of this lattice by taking the union of bases $\{e_1,\ldots ,e_{\dim F +1}\} $ and $\{f_1,\ldots , f_{\dim G +1}\}$ for $M(F)$ respectively $M(G)$. Moreover, we assume that $e_1,\ldots ,e_{\dim F +1}$ lie in the affine span of $F\times \{1\}$. Then we consider the dual basis $\{ e_1^*,\ldots ,f_{\dim G +1}^* \}$ of the dual lattice. Then $F\times \{1\}$ and $G\times\{1\}$ lie on two parallel hyperplanes determined by $e_1^* + \cdots + e_{\dim F +1}^*$. It follows that $P$ is a Cayley join of $F$ and $G$.
%We use the embedding of $P$ in $(M(F)\oplus M(G))_{\mathbb{R}}$, given by the definition of $\mathbb{Z}$-join. We choose an isomorphism 
%\[\bigl(M(F) \cap (M\times\{0\})\bigr) \oplus \mathbb{Z} \cong M(F)\]
%such that the composition
%\[ \bigl(M(F) \cap (M\times\{0\}) \bigr)\oplus \mathbb{Z} \cong M(F) \hookrightarrow M\oplus \mathbb{Z} \] respects the last coordinate. We get an %isomorphism
%\[ M \cong \bigl(M(F) \cap (M\times\{0\}) \bigr) \oplus M(G)\]
%and hence an embedding of $F$ and $G$ in the vector space associated to this second lattice. The coordinates are chosen such that we can see that $P$ %is indeed a Cayley join of $F$ and $G$.
\item[(2)] Let $P$ be a $\mathbb{Z}$-join of faces $F$ and $G$. We show that $P$ is isomorphic to what is called in the literature the join, or free join, of $F$ and $G$ (see \cite[p.362]{HenkRichterZiegler}). The \textit{free join} $F\star G$ of $F$ and $G$ is defined as the convex hull of 
\[ (F\times \{1\}) \times \{0\} \times \{0\} \qquad \text{and} \qquad \{0\} \times (G\times \{1\}) \times\{1\} \] in $(M(F)\oplus M(G) \oplus \mathbb{Z})_{\mathbb{R}}$. Usually one works with full dimensional polytopes, and then the last coordinate is used to make sure that they are embedded in skew affine spaces. In our case, we can forget the last coordinate because $F\times \{1\}$ and $G\times\{1\}$ already lie in skew affine spaces, since $\dim F + \dim G = \dim P - 1$.
\item[(3)] For a free join, and hence for a $\mathbb{Z}$-join, it is well known that the $h^*$-polynomial behaves multiplicatively, i.e.\ $h_{P}^*(t) = h_F^*(t) \,h_G^*(t)$, see for instance \cite[Lemma 1.3]{HenkTagami}. Using Theorem~\ref{resultHibi} it follows immediately that a $\mathbb{Z}$-join of Gorenstein polytopes is Gorenstein.
\item[(4)] For all three kinds of joins that we have defined, one has that every face of $P$ is a join of the same kind of a face of $F$ and a face of $G$.
\item[(5)] For a free join or a $\mathbb{Z}$-join we also have $\widetilde{S}(P,t) = \widetilde{S}(F,t) \, \widetilde{S}(G,t)$. This follows immediately from (3), (4) and the multiplicativity of $g$-polynomials with respect to products of posets.
\item[(6)] Let $P$ be a join of $F$ and $G$. As above, we can embed the free join of $F$ and $G$ in $M(F)\oplus M(G)$, which embeds as a sublattice of finite index in $M\oplus \mathbb{Z}$ since $M(F)_{\mathbb{R}}\oplus M(G)_{\mathbb{R}} \cong M_{\mathbb{R}} \oplus \mathbb{R}$. It follows that we can consider a join as a free join with respect to a coarser lattice, and hence by (3) we have
\[ \text{codeg}\, P \leq \text{codeg}\, F\star G = \text{codeg}\, F + \text{codeg}\, G.\] 
\end{itemize}
\end{remark}

\begin{example}\label{ExampleJoins}
We illustrate the above definitions with several examples.
\begin{itemize}
\item[(1)] The interval $[0,2]$ is a lattice polytope with respect to $\mathbb{Z}$. It is a join of its vertices, but no Cayley join.
\item[(2)] Consider the lattice $\mathbb{Z}^2$ in $\mathbb{R}^2$ and the lattice polytopes
\begin{eqnarray*}
F &\text{ with vertices }& (0,0), (1,0),\\
G &\text{ with vertices }& (0,0), (-1,2).
\end{eqnarray*}
The Cayley polytope $P=F*G$ is a lattice polytope with respect to $\mathbb{Z}^3$ and has vertex set
\[ \{ (0, 0, 1), (1, 0, 1), (0, 0, 0), (-1, 2, 0) \} \]
and $P$ is a Cayley join of its two $1$-dimensional faces $F$ and $G$. 
The reader may check that $M(F)\oplus M(G)$ embeds as a sublattice of index 2 in $\mathbb{Z}^4$ and hence $P$ is not a $\mathbb{Z}$-join of $F$ and $G$.
Note that $h^*_F(t) = h_G^*(t) = 1$ and that $h_P^*(t)=1+t^2$, so all three polytopes are Gorenstein, but the $h^*$-polynomial does not behave multiplicatively.
\item[(3)] A Cayley join of Gorenstein polytopes does not need to be Gorenstein. If in (2) we change the last vertex of $G$ to $(-1,3)$, then $h_{F*G}^*(t)$ becomes $1+2t^2$ and hence $F*G$ is not Gorenstein.
\end{itemize}
\end{example}

\begin{lemma}\label{dual-join}
Let $P$ be a Gorenstein polytope that is a join of $F$ and $G$. 
Then the dual Gorenstein polytope $P^{\times}$ is a join of $F^*$ and $G^*$. 
Here, the polytopes $F$ and $G^*$, respectively $G$ and $F^*$, are combinatorially dual (i.e.\ their face posets are dual posets).
\end{lemma}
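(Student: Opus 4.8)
The statement has three parts: first, that $P^{\times}$ is the convex hull of $F^*$ and $G^*$ with $\dim F^* + \dim G^* = \dim P^{\times} - 1$; second, that $F$ and $G^*$ are combinatorially dual; third, the symmetric statement for $G$ and $F^*$. The plan is to work entirely on the level of the cone $C_P$ associated to $P$ in $M_{\mathbb{R}} \oplus \mathbb{R}$. By the remark following the definition of a join, being a join means exactly that the ambient space splits as a direct sum $\mathrm{lin}(F \times \{1\}) \oplus \mathrm{lin}(G \times \{1\})$ and under this splitting $C_P \cong C_F \oplus C_G$ (internal direct sum of cones). Dualising, $C_P^\vee \cong C_F^\vee \oplus C_G^\vee$ inside $N_{\mathbb{R}} \oplus \mathbb{R}$, which splits as the direct sum of the two annihilators. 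Since $P^{\times}$ is the support polytope of $C_P^\vee$, this directly exhibits $P^{\times}$ as a join of the support polytopes of $C_F^\vee$ and $C_G^\vee$.

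The first step is therefore to identify those two support polytopes with $F^*$ and $G^*$. Here I would track the distinguished interior lattice point $n = n_{C_P} \in C_P^\vee \cap N$ that cuts out $P^{\times}$ as $C_P^\vee \cap H_{n,1}$. Under the direct sum decomposition $N_{\mathbb{R}} \oplus \mathbb{R} = \mathrm{lin}(F\times\{1\})^\perp \oplus \mathrm{lin}(G\times\{1\})^\perp$ (where $\perp$ is taken inside the dual of $M_{\mathbb{R}}\oplus\mathbb{R}$), one writes $n = n_F + n_G$ with $n_F$, $n_G$ the components; the claim to verify is that $n_F$ and $n_G$ are precisely the distinguished interior lattice points $n_{C_F^\vee}$-type functionals defining the support polytopes of the factor cones, and that the face $F^*$ of $P^{\times}$ — defined as the face dual to $F \le P$ under the $C_P \leftrightarrow C_P^\vee$ face correspondence — is exactly the slice of the $C_G^\vee$-factor. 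The key computation is a dimension count: $F^*$ corresponds under the cone duality to the face $C_F$ of $C_P$ (more precisely to the face of $C_P$ spanned by $F\times\{1\}$), whose annihilator in $C_P^\vee$ is exactly the $C_G^\vee$-factor, of dimension $\dim G + 1$; intersecting with $H_{n,1}$ gives $\dim G^* = \dim G$. Summing, $\dim F^* + \dim G^* = \dim F^* + \dim G$, and combined with $\dim F + \dim F^* = d - 1$ and $\dim F + \dim G = d - 1$ one gets $\dim G = \dim G^*$... so I need to instead use $\dim F^* = (\dim P^{\times}) - 1 - \dim G^*$ directly from the factor dimensions; this bookkeeping is routine once the factorisation $C_P^\vee = C_F^\vee \oplus C_G^\vee$ is in hand.

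For the combinatorial duality of $F$ and $G^*$: under the factorisation, the face poset of $P$ is the product of the face posets of $F$ and $G$ (this is essentially Remark~\ref{RemarkJoins}(4), or follows directly from $C_P = C_F \oplus C_G$), and likewise the face poset of $P^{\times}$ is the product of those of $F^*$ and $G^*$. The $C_P \leftrightarrow C_P^\vee$ duality is an order-reversing bijection on faces; restricted to the sub-poset of faces of $C_P$ of the form (face of $C_F$) $\oplus$ $\{0\}$ — i.e.\ the face poset of $F$ — it lands in the faces of $C_P^\vee$ of the form $\{0\} \oplus$ (face of $C_G^\vee$), i.e.\ the face poset of $G^*$, because annihilating a face supported on the $C_F$-factor forces you onto the full $C_G^\vee$-factor plus a face of $C_F^\vee$... again a direct-sum annihilator computation. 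This gives an order-reversing bijection between the faces of $F$ and the faces of $G^*$, which is what combinatorial duality means. The symmetric statement swaps the roles of $F$ and $G$.

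I expect the main obstacle to be purely notational: keeping straight the four nested lattices ($M$ versus $M \oplus \mathbb{Z}$, and their $F$- and $G$-sublattices), the two kinds of duality ($(\cdot)^\times$ for Gorenstein polytopes versus $(\cdot)^\vee$ for cones), and which functional ($n_{C_P}$ and its components) cuts out which support polytope. The geometric content — that duality of cones respects direct sums, hence respects joins — is immediate; the work is in checking that the distinguished lattice points and the face-correspondence notation are compatible with the splitting, and in the elementary dimension arithmetic confirming $\dim F^* + \dim G^* = \dim P^{\times} - 1$. No step requires anything beyond the definitions and the cone/polytope dictionary recalled in Section~\ref{subsectionGorenstein}.
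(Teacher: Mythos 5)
Your proposal is correct in substance, but it routes everything through the cones where the paper stays at the level of the polytopes, so a comparison is worthwhile. The paper's proof is shorter: the dimension count is immediate from $\dim F+\dim F^{*}=\dim G+\dim G^{*}=\dim F+\dim G=\dim P-1$; the convex-hull claim follows because every facet of $P$, being a join of a face of $F$ and a face of $G$ whose dimensions must sum to $\dim P-2$, necessarily contains all of $F$ or all of $G$, so every vertex of $P^{\times}$ lies in $F^{*}$ or $G^{*}$; and the combinatorial duality is given by the explicit order-reversing map sending a face $H\leq F$ to $(H')^{*}\leq G^{*}$, where $H'$ is the face of $P$ that is the join of $H$ and $G$. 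Your decomposition $C_P^{\vee}=C_F^{\vee}\oplus C_G^{\vee}$ is the cone-level shadow of exactly this (the remark after the definition of a join licenses the splitting $C_P\cong C_F\oplus C_G$), and it has the advantage of delivering all three assertions at once from the single fact that cone duality exchanges direct sums; it also sets up Proposition~\ref{cayley-join}. Two bookkeeping slips should be repaired in a write-up, though neither is a gap in the idea: the hyperplane cutting $P^{\times}$ out of $C_P^{\vee}$ is given by $m_{C_P^{\vee}}\in C_P\cap M$, not by $n_{C_P}\in N$, which lives in the wrong lattice to pair with points of $C_P^{\vee}$; and the dual of a face of the form $(\text{face of }C_F)\oplus\{0\}$ is a face of the form $(\text{face of }C_F^{\vee})\oplus C_G^{\vee}$ --- a face of $C_P^{\vee}$ containing the whole $C_G^{\vee}$-factor, as your own parenthetical correctly states --- and not one of the form $\{0\}\oplus(\text{face of }C_G^{\vee})$. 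The sliced faces of the first kind form the interval $[F^{*},P^{\times}]$, which is canonically isomorphic to the face poset of $G^{*}$, and composing with the order-reversing duality of $C_F$ and $C_F^{\vee}$ gives the asserted combinatorial duality of $F$ and $G^{*}$; the relabelling in your dimension paragraph (it is $\dim F^{*}=\dim G$ and $\dim G^{*}=\dim F$ that come out of the factor dimensions) then resolves itself.
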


\begin{proof}
Since $\dim F + \dim F^* = \dim P-1$, and similarly for $G$ and $G^*$, it follows immediately that $\dim F^* + \dim G^* = \dim P -1$. Any facet of $P$ contains by Remark~\ref{RemarkJoins} (4) either $F$ or $G$. This means that any vertex of $P^{\times}$ belongs to $F^*$ or $G^*$, and hence $P^{\times}$ is the convex hull of $F^*$ and $G^*$. 

The duality between faces of $F$ and $G^*$ works as follows. Starting from a face $H$ of $F$ we consider the face $H'$ of $P$ that is a join of $H$ and $G$. Then $(H')^*$ is a face of $G^*$. 
\end{proof}

Let $P$ be a (full dimensional) Gorenstein polytope in $M_{\mathbb{R}}$ that is a join of $F$ and $G$. We write $\overline{M}$ for $M\oplus \mathbb{Z}$ and we consider the cone $C_P$ in $\overline{M}_{\mathbb{R}}$. Let $N$ be the lattice dual to $M$, and identify the dual of $\mathbb{Z}$ with $\mathbb{Z}$ via $\alpha \mapsto \alpha(1)$. We write also $\overline{N}$ for $N\oplus \mathbb{Z}$. The face $G^*$ of $P^{\times}$ (as support polytope of $C_P^{\vee}$ in $\overline{N}_{\mathbb{R}}$) determines the sublattice $N(G^*) := \text{lin}(G^*) \cap \overline{N}$ of $\overline{N}$. We get the (split) exact sequence of free $\mathbb{Z}$-modules
\[  0 \to N(G^*) \to \overline{N} \to \overline{N}/N(G^*) \to 0.\] 
Note that the dual lattice of $\overline{N}/N(G^*)$ is the sublattice $M(G)$ of $\overline{M}$ (with the obvious induced pairing). So we get the dual exact sequence
\[  0 \to M(G) \to \overline{M} \to \overline{M}/M(G) \to 0.\] 
Since $\overline{M}_{\mathbb{R}} = M(F)_{\mathbb{R}} \oplus M(G)_{\mathbb{R}}$, we find that $M(F)$ is a sublattice in $\overline{M}/M(G)$ of finite index. We denote the inclusion and its real extension by $\pi_G$ and we will consider $\pi_G(F)$ henceforth as a lattice polytope with respect to $\overline{M}/M(G)$. Of course, we can repeat this story to construct the polytope $\pi_F(G)$ in $\overline{M}/M(F)$.

After these preparations we can formulate the following proposition.

\begin{proposition}\label{cayley-join}
Let $P$ be a Gorenstein polytope in $M_{\mathbb{R}}$ that is a Cayley join of faces $F$ and $G$. 
Then $G^*$ is a Gorenstein polytope with respect to $N(G^*)$ 
with dual Gorenstein polytope $\pi_G(F)$. 
Similarly, $F^*$ is a Gorenstein polytope 
with respect to $N(F^*)$ with dual Gorenstein polytope $\pi_F(G)$.

If $P$ is a $\mathbb{Z}$-join of $F$ and $G$, then $G^*$ and $F$, respectively $F^*$ and $G$ are 
dual Gorenstein polytopes.
\end{proposition}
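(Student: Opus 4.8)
The plan is to argue entirely on the level of cones. Put $\overline{M}=M\oplus\mathbb{Z}$, $V_F=\mathrm{lin}(F\times\{1\})$ and $V_G=\mathrm{lin}(G\times\{1\})$. A Cayley join is in particular a join (Remark~\ref{RemarkJoins}(1)), so $\overline{M}_{\mathbb{R}}=V_F\oplus V_G$ and, by the remark following the definition of a join, $C_P=C_F\oplus C_G$. Dualizing, $\overline{N}_{\mathbb{R}}=V_F^*\oplus V_G^*$ with $V_F^*=V_G^\perp=N(G^*)_{\mathbb{R}}$ and $V_G^*=V_F^\perp=N(F^*)_{\mathbb{R}}$, and $C_P^\vee=C_F^\vee\oplus C_G^\vee$, where $C_F^\vee\subset N(G^*)_{\mathbb{R}}$ is the dual of $C_F$ formed inside $V_F^*$, and symmetrically $C_G^\vee\subset N(F^*)_{\mathbb{R}}$.

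First I would identify $G^*$ as a support polytope, using only that $P$ is a join. A short computation shows that the face of $C_P^\vee$ dual to the face of $C_P$ corresponding to $G$ is $C_F^\vee\oplus\{0\}$, so $G^*=C_F^\vee\cap\{\langle m_{C_P^\vee},\cdot\rangle=1\}$, the intersection of $C_F^\vee$ with the hyperplane cutting out $P^\times$. Every ray of $C_F^\vee$ is a ray of $C_P^\vee$, hence a vertex of $P^\times$, hence a lattice point of $N(G^*)$ lying at height $1$ with respect to the image $\overline{m}$ of $m_{C_P^\vee}$ in the dual lattice $\overline{M}/M(G)$ of $N(G^*)$. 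Thus $C_F^\vee$ is a Gorenstein cone with respect to $N(G^*)$ whose support polytope is $G^*$, and symmetrically $C_G^\vee$ is a Gorenstein cone with respect to $N(F^*)$ with support polytope $F^*$.

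Next I would use the Cayley hypothesis to upgrade this to reflexivity, that is, to show that $(C_F^\vee)^\vee$ — which is just $C_F$ regarded with the overlattice $\overline{M}/M(G)\supseteq\pi_G(M(F))$, namely $\pi_G(C_F)$ — is again a Gorenstein cone. By Remark~\ref{separating} there are $u\in N$, $\delta\in\mathbb{Z}$ with $F\subset\{\langle u,\cdot\rangle=\delta\}$ and $G\subset\{\langle u,\cdot\rangle=\delta-1\}$. Set $w:=(u,1-\delta)\in\overline{N}$. Then $\langle w,\cdot\rangle$ is identically $1$ on $F\times\{1\}$ and identically $0$ on $G\times\{1\}$; the second statement forces $w\in V_G^\perp$, since a linear functional vanishing on the affine span of $G\times\{1\}$ (a hyperplane of $V_G$ missing the origin) vanishes on all of $V_G$, whence $w\in V_G^\perp\cap\overline{N}=N(G^*)$, while the first statement says $w|_{V_F}=n_{C_F}$. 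Hence $\pi_G(C_F)$ is generated by lattice points of $\overline{M}/M(G)$ at height $1$ with respect to $w\in N(G^*)$, so it is a Gorenstein cone, with support polytope $\pi_G(F)$. Therefore $C_F^\vee$ is reflexive, so by the proposition characterizing reflexive Gorenstein cones via their support polytopes its support polytope $G^*$ is a Gorenstein polytope with respect to $N(G^*)$; and its dual Gorenstein polytope, being the support polytope of $(C_F^\vee)^\vee=\pi_G(C_F)$, is $\pi_G(F)$. Exchanging $F$ and $G$ (replace $w$ by $(-u,\delta)$) gives the statement for $F^*$.

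For the last sentence: a $\mathbb{Z}$-join is a Cayley join (Remark~\ref{RemarkJoins}(1)), so the above applies; moreover $M(F)\oplus M(G)\to\overline{M}$ is then an isomorphism, hence so is $\pi_G\colon M(F)\to\overline{M}/M(G)$, so $\pi_G(F)$ is isomorphic to $F$ with its own lattice, and likewise $\pi_F(G)\cong G$, which turns the previous paragraph into the assertion that $G^*$ and $F$, respectively $F^*$ and $G$, are dual Gorenstein polytopes. The part I expect to be most delicate is the first step: one has to line up the splittings $C_P=C_F\oplus C_G$ and $C_P^\vee=C_F^\vee\oplus C_G^\vee$ with the lattices $N(G^*)$, $M(G)$ and the map $\pi_G$ exactly as set up just before the proposition, and check that the ray generators of $C_F^\vee$ really lie in $N(G^*)$ at $\overline{m}$-height $1$, so that ``$G^*$ = support polytope of $C_F^\vee$ over $N(G^*)$'' holds on the nose. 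After that, the Cayley input in the second step is short: it simply produces the lattice functional $w\in N(G^*)$ restricting to $n_{C_F}$.
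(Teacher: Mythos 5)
Your argument is correct and is essentially the paper's own proof: the paper likewise works with the cone over $G^*$ (your $C_F^\vee$), notes that its dual in $(\overline{M}/M(G))_{\mathbb{R}}$ is $\pi_G(C_F)$, and uses the separating functional of Remark~\ref{separating} normalized by $n_{C_P}$ --- exactly your $w=(u,1-\delta)$ --- to exhibit $\pi_G(C_F)$ as a Gorenstein cone with support polytope $\pi_G(F)$, concluding by reflexivity; the $\mathbb{Z}$-join case is handled identically. The only difference is that you spell out, via the splitting $C_P^\vee = C_F^\vee \oplus C_G^\vee$, the identification that the paper dismisses as ``clear''.
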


\begin{proof}
We consider the Gorenstein cone $C_{G^*}$ generated by the vertices of $G^*$ in $N(G^*)_{\mathbb{R}}$. It is clear that $C_{G^*}^{\vee}$ in $\bigl(\overline{M}/M(G)\bigr)_{\mathbb{R}}$ equals the cone $\pi_G(C_F)$. 

As in the proof of Theorem~\ref{maintheorem} we denote the vertex set of $F$ by $V_0$ and the vertex set of $G$ by $V_1$. By Remark~\ref{separating} there exists a lattice point $y'\in \overline{N}$ and an integer $\delta$ such that $\langle y',V_0 \rangle = \delta$ and $\langle y',V_1 \rangle = \delta - 1$. We put $y = y' - (\delta-1) n_{C_P}$, with $n_{C_P}$ as in Section~\ref{subsectionGorenstein}. Then $\langle y,V_0 \rangle = 1$ and $\langle y,V_1 \rangle = 0$. Hence $y$ lies in the interior of $ C_{G^*}$ in $N(G^*)_{\mathbb{R}}$. In particular, $\langle y, \pi_G(v_0) \rangle = 1$ for all $v_0\in V_0$. This means that the cone $C_{G^*}^{\vee}$ in $\bigl(\overline{M}/M(G)\bigr)_{\mathbb{R}}$ is a Gorenstein cone with support polytope $\pi_G(F)$. Hence, $G^*$ and $\pi_G(F)$ are dual Gorenstein polytopes. 

In case $P$ is a $\mathbb{Z}$-join of $F$ and $G$, then $\pi_G$ is an isomorphism at the level of the lattices and hence $G^*$ and $F$ are dual Gorenstein polytopes.
\end{proof}

\begin{remark}
In particular, we observe that in Remark~\ref{RemarkJoins} (3) also the converse holds: a $\mathbb{Z}$-join of polytopes $F,G$ is Gorenstein if and only if $F,G$ are Gorenstein.
\end{remark}

\begin{example}\label{ExampleNonGorenstein}
From Example~\ref{ExampleJoins} (2) we cook up an example of a Gorenstein polytope that is a Cayley join of two faces such that not both faces are Gorenstein. Consider the lattice $\mathbb{Z}^3$ in $\mathbb{R}^3$ and the lattice polytopes
\begin{eqnarray*}
F &\text{ of dimension $1$ with vertices }& (0,0,0), (1,0,0),\\
G &\text{ of dimension $2$ with vertices }& (0,0,0), (-1,2,0), (0,0,2), (-1,2,2).
\end{eqnarray*}
The $4$-dimensional Cayley polytope $P=F*G$ is Gorenstein of index 2 since $h^*_P(t) = 1 +3t + 3t^2+t^3$, whereas $G$ is not Gorenstein.
\end{example}

Now we can treat the case where equality holds in Theorem~\ref{maintheorem} for a certain face.

\begin{theorem}\label{equality}
Let $F$ be a face of a Gorenstein polytope $P$ of index $r$. The following are equivalent:
\begin{itemize}
\item[(1)] $\textnormal{codeg}\, F + \textnormal{codeg}\, F^* = r$,
\item[(2)] there exists a face $G$ of $P$ such that $P$ is a Cayley join of $F$ and $G$, and $P^{\times}$ is a Cayley join of $F^*$ and $G^*$.
\end{itemize}
In this case, $F,G,F^*,G^*$ are Gorenstein polytopes, and 
\[ \textnormal{codeg}\, P = \textnormal{codeg}\, F + \textnormal{codeg}\, G. \]
The same additivity property holds for the degree and the Calabi-Yau dimension. 
\end{theorem}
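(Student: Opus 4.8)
The equivalence is two implications, and I would prove $(1)\Rightarrow(2)$ by pinning down exactly when the inequality in the proof of Theorem~\ref{maintheorem} is an equality, and $(2)\Rightarrow(1)$ with the help of Proposition~\ref{cayley-join}. The additivity of the codegree then comes out of the same analysis, and the additivity of the degree and of the Calabi--Yau dimension follow formally from it together with $\dim F + \dim G = \dim P - 1$ (recall $\textnormal{CYdim} = \deg - \textnormal{codeg}$).

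For $(1)\Rightarrow(2)$ I keep all notation from the proof of Theorem~\ref{maintheorem}, where $r - \textnormal{codeg}\,F - \textnormal{codeg}\,F^{*} = \langle n - f',\, m-f\rangle = S_1+S_2+S_3-S_4$ with $S_1,S_2\le 0$, $S_3=0$, $S_4\ge 0$. Equality in Theorem~\ref{maintheorem} forces $S_1=S_2=S_4=0$, which, using positivity of the relevant coefficients, unwinds to: $\langle u,v\rangle = 0$ for every vertex $u$ of $P^{\times}$ not in $F^{*}$ and every vertex $v$ of $P$ not in $F$; $\langle u,f\rangle = 1$ for each such $u$; and $\langle f',v\rangle = 1$ for each such $v$. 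Set $G := \textnormal{conv}\{\text{vertices of }P\text{ not in }F\}$ and $G' := \textnormal{conv}\{\text{vertices of }P^{\times}\text{ not in }F^{*}\}$. Then $n-f' \in C_P^{\vee}\cap N$ is nonnegative on $C_P$ with zero face the cone over $G$, so $G$ is a face of $P$, and $n-f'$ separates $F$ and $G$ at consecutive integral levels in the sense of Remark~\ref{separating}; symmetrically $m-f\in C_P\cap M$ shows that $G'$ is a face of $P^{\times}$ separated from $F^{*}$. The vanishing $\langle u,v\rangle = 0$ identifies the face of $C_P^{\vee}$ dual to the cone over $G$ with the cone over $G'$, whence $G' = G^{*}$. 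A short count using $P = \textnormal{conv}(F,G)$, $P^{\times} = \textnormal{conv}(F^{*},G^{*})$ and $\dim F + \dim F^{*} = \dim P - 1$ then gives $\dim F + \dim G = \dim P - 1$. Together with the two separating functionals this says precisely that $P$ is a Cayley join of $F$ and $G$ and $P^{\times}$ a Cayley join of $F^{*}$ and $G^{*}$.

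For $(2)\Rightarrow(1)$, Proposition~\ref{cayley-join} applied to $P$ gives that $F^{*}$ and $G^{*}$ are Gorenstein with dual Gorenstein polytopes $\pi_F(G)$ and $\pi_G(F)$, and applied to $P^{\times}$ it gives that $(F^{*})^{*}=F$ and $(G^{*})^{*}=G$ are Gorenstein as well. Since $\pi_G(F)$ is $F$ taken with respect to the finer lattice $\overline M/M(G)\supseteq M(F)$, we get $\textnormal{codeg}\,G^{*} = \textnormal{codeg}\,\pi_G(F) \le \textnormal{codeg}\,F$, and the same argument on $P^{\times}$ yields $\textnormal{codeg}\,F = \textnormal{codeg}\,\pi_{F^{*}}(G^{*}) \le \textnormal{codeg}\,G^{*}$; hence $\textnormal{codeg}\,F = \textnormal{codeg}\,G^{*}$, and symmetrically $\textnormal{codeg}\,G = \textnormal{codeg}\,F^{*}$. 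To connect with $r$, lift the separating functional of Remark~\ref{separating} to $p\in C_P^{\vee}\cap N$ taking value $1$ on the rays of $C_P$ lying in $F$ and $0$ on those lying in $G$, and put $\hat n := n_{C_P} - p$; one checks that $p = n_{C_{\pi_G(F)}}$ for the Gorenstein cone $C_{\pi_G(F)} = C_{G^{*}}^{\vee}$, that $\hat n = n_{C_{\pi_F(G)}}$ for $C_{\pi_F(G)} = C_{F^{*}}^{\vee}$, and that the image of $m_{C_P^{\vee}}$ in the relevant quotient is the interior point computing the index, so that $\textnormal{codeg}\,G^{*} = \langle p, m_{C_P^{\vee}}\rangle$ and $\textnormal{codeg}\,F^{*} = \langle \hat n, m_{C_P^{\vee}}\rangle$. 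Adding these gives $\textnormal{codeg}\,F^{*} + \textnormal{codeg}\,G^{*} = \langle n_{C_P}, m_{C_P^{\vee}}\rangle = r$, hence $\textnormal{codeg}\,F + \textnormal{codeg}\,F^{*} = \textnormal{codeg}\,G^{*} + \textnormal{codeg}\,F^{*} = r$, which is (1), and also $\textnormal{codeg}\,P = r = \textnormal{codeg}\,F + \textnormal{codeg}\,G$ because $\textnormal{codeg}\,F^{*} = \textnormal{codeg}\,G$.

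I expect the main obstacle to be the lattice bookkeeping in both directions. In $(1)\Rightarrow(2)$ one must check that a consecutive-level separating functional together with the dimension equality really upgrades to isomorphisms $F * G \cong P$ and $F^{*} * G^{*} \cong P^{\times}$, which needs a lattice point of value $1$ to ``unfold'' along the separating direction. In $(2)\Rightarrow(1)$ the subtle step is the identification of $p$ and $\hat n$ with the defining vectors of $C_{\pi_G(F)}$ and $C_{\pi_F(G)}$ and of the pushforward of $m_{C_P^{\vee}}$ with the interior point that computes the index; this is exactly what ties $\textnormal{codeg}\,F^{*}$ and $\textnormal{codeg}\,G^{*}$ to $\langle n_{C_P}, m_{C_P^{\vee}}\rangle$ and makes the whole chain close.
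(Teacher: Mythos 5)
Your proposal is correct and follows the same overall architecture as the paper: both directions hinge on the observation that equality in Theorem~\ref{maintheorem} is equivalent to the three conditions $\langle u,v\rangle=0$ for $u\in U_1, v\in V_1$, $\langle u,f\rangle=1$ for $u\in U_1$, and $\langle f',v\rangle=1$ for $v\in V_1$, and your $(1)\Rightarrow(2)$ (forcing $S_1=S_2=S_4=0$, reading off the face $G$ as the zero face of $n-f'$, identifying $G'$ with $G^*$, and closing the dimension count) is essentially identical to the paper's. Where you genuinely diverge is in closing $(2)\Rightarrow(1)$: the paper re-derives conditions (ii) and (iii) by an inequality chain that pins down the interior lattice point $f$ of $(\textnormal{codeg}\,F)F$ as the explicit point $x$ with $\langle U_0,x\rangle=0$, $\langle U_1,x\rangle=1$, and then reads off $\langle y',m\rangle=\textnormal{codeg}\,F$, $\langle x',m\rangle=\textnormal{codeg}\,G$, $x'+y'=n$; you instead apply Proposition~\ref{cayley-join} to both $P$ and $P^{\times}$ to get $\textnormal{codeg}\,F=\textnormal{codeg}\,G^*$ and $\textnormal{codeg}\,G=\textnormal{codeg}\,F^*$ by a two-sided finite-index-sublattice comparison, and compute $\textnormal{codeg}\,F^*+\textnormal{codeg}\,G^*=\langle p,m\rangle+\langle n-p,m\rangle=r$ directly from the reflexive-Gorenstein-cone description of $C_{G^*}$ and $C_{F^*}$. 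Your route is slightly slicker in that it avoids identifying $f$ explicitly and yields the symmetric relations $\textnormal{codeg}\,F=\textnormal{codeg}\,G^*$, $\textnormal{codeg}\,G=\textnormal{codeg}\,F^*$ as a byproduct (these are only implicit in the paper); the price is that it leans harder on the proof, not just the statement, of Proposition~\ref{cayley-join}, namely the identification of the normalized separating functional $p$ with $m_{(C_{G^*})^{\vee}}$ and of the image of $m_{C_P^{\vee}}$ with $n_{C_{G^*}}$, which you correctly flag as the step needing care. Both arguments deliver the Gorenstein property of $F,G,F^*,G^*$ and the additivity statements in the same way.
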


\begin{proof}
We use the notations of the proof of Theorem~\ref{maintheorem}. Note that $\alpha_u > 0$ for $u\in U_1$ and $\beta_v > 0$ for $v\in V_1$.
%Given $u\in U_1$, we look at the line through $ru$ and $n$ in the Gorenstein cone $C_P^{\vee}$. Its intersection with $C_P^{\vee}$ is just a segment, %and $n$ is a convex combination of the vertices of this segment. In particular, we can write $n = \sum_{x\in U} \gamma_x\ x$, with $\gamma_u = %\alpha_u > 0$. Similarly, given $v\in V_1$, we may assume that $\beta_v > 0$. 
It follows that (1) holds if and only if there exist lattice points $f$ in the relative interior of $sF \subset C_P$ and $f'$ in the relative interior of $s'F^* \subset C_P^{\vee}$ such that
\begin{itemize}
\item[(i)] $\langle u,v \rangle = 0$ for all $u\in U_1, v\in V_1$,
\item[(ii)] $\langle u, m - f \rangle = 0$ for all $u\in U_1$, or equivalently $\langle u, f \rangle = 1$ for all $u\in U_1$,
\item[(iii)] $\langle n-f',v \rangle = 0$ for all $v\in V_1$, or equivalently $\langle f',v \rangle = 1$ for all $v\in V_1$.
\end{itemize}

Assume first that these three conditions hold. Then $n-f'$ cuts out a face of $C_P$, corresponding to a face $G$ of $P$, with $G$ equal to the convex hull of all vertices in $V_1$. Moreover, $G^*$ has $U_1$ as vertex set. Since
\[  F = \{ x\in P \, | \, \langle f',x \rangle  = 0 \} \qquad \text{and} \qquad G = \{ x\in P \, | \, \langle f',x \rangle  = 1\},  \]
it follows that $P$ is isomorphic to the Cayley polytope associated to $F$ and $G$. Similarly, $P^{\times}$ is isomorphic to the Cayley polytope associated to $F^*$ and $G^*$. Let $d:=\dim P$. Since $P$ is the convex hull of the vertices of $F$ and $G$ we have $\dim F + \dim G \geq d -1$ and similarly $d-1 \leq \dim F^* + \dim G^*$. So, $d- 1 \leq d-1 - \dim F + d-1 - \dim G \leq d-1$. Therefore, $\dim F + \dim G = d -1$ as well as $\dim F^* + \dim G^* = d -1$. 

Assume now that (2) holds. We show that (i),(ii) and (iii) hold true then. We have statement (i) by assumption, since $U_1$ is the vertex set of $G^*$ and $V_1$ the vertex set of $G$. Note that all polytopes $F,G,F^*,G^*$ are Gorenstein by Proposition~\ref{cayley-join}.

By Remark~\ref{separating} there exist lattice points $x'\in N\oplus \mathbb{Z}$ such that $\langle x', V_0\rangle  = 0$ and $\langle x', V_1\rangle  = 1$, and $y' \in N\oplus \mathbb{Z}$ such that 
$\langle y', V_0\rangle  = 1$ and $\langle y', V_1\rangle  = 0$. Hence, $x'$ belongs to the interior of $C_{F^*}$ and 
$y'$ to the interior of $C_{G^*}$ such that $x'+y' = n$. Let $w$ be any lattice point in the interior of $C_F$. 
Then $\langle u_1,w\rangle \geq 1$ for any $u_1 \in U_1$. Therefore, $\langle y'/\langle y',m\rangle , w\rangle \geq 1$.
So $\langle y',w\rangle \geq \langle y',m \rangle$. Therefore, $\langle n,w \rangle = \langle y',w\rangle \geq \langle y',m\rangle$. 
In particular, $\text{codeg}\, F = \langle n,f \rangle \geq \langle y', m \rangle$, where $f$ is the unique lattice point in the relative interior of $sF$. Now, let $x \in M \oplus \mathbb{Z}$ such that $\langle U_0, x \rangle  = 0$ and $\langle U_1,x \rangle = 1$. Therefore, 
$x$ is a lattice point in the interior of $C_F$. In particular, we get $\text{codeg}\, F \leq \langle n,x \rangle$. 
Moreover, the previous argument yields $\langle  n,x \rangle  = \langle y',x \rangle = \langle y', m \rangle \leq \text{codeg} \, F$. 
Therefore, $\langle n,x\rangle = \text{codeg} \, F$. This shows that $x = f$ and hence (ii) holds. Of course, the proof of (iii) is similar.

We have shown that $\langle y', m \rangle = \text{codeg}\, F$. A symmetric argument shows that $\langle x', m \rangle = \text{codeg}\, G$. From $x'+y' = n$ we get then 
\[ \text{codeg}\, F + \text{codeg}\, G  = \langle n,m \rangle = r = \text{codeg}\, P .\] 
From the definitions it follows that this additivity holds for the degree and the Calabi-Yau dimension as well.
\end{proof}

%\begin{remark}\label{codegreeremark}
%In the situation of the above theorem, we have that the index of $F$ as a Gorenstein polytope is equal to the index of $G^*$, although $F$ and $G^*$ %do not need to be dual Gorenstein polytopes (see the example below). Indeed, the index of $G^*$ equals the index of $(G^*)^{\times}$, which is %$\pi_G(F)$. This polytope is simply $F$ considered with respect to a finer lattice, hence the index of $G^*$ is less than or equal to the index of %$F$. A symmetric argument learns that
%\[ \text{index}\, F = \text{index} \, F^{\times} = \text{index}\, \pi_{F^*}(G^*) \leq \text{index} \, G^*.\]
%It follows also that $\text{codeg}\, F + \text{codeg}\, G = r$. 
%\end{remark}

%\begin{remark}
%Getting back to Example~\ref{ExampleNonGorenstein} we see that condition (1) is not satisfied, since $\text{codeg}\, F = 2 =r$. Note also that $G$ is %not Gorenstein. We consider the Gorenstein cone $C_P$ associated with $P$ in $\mathbb{R}^5$. After identification of the dual of $\mathbb{R}^5$ with %$\mathbb{R}^5$ via $\alpha \mapsto (\alpha(e_1),\ldots , \alpha(e_5))$ where the $e_i$ denote the standard basis vectors, one can compute that the %dual cone $C_P^{\vee}$ is generated by 
%\[ (-2, -1,  0,  2,  0), (2,  1,  0,  0,  0), (0 , 1 , 0 , 0 , 0),( 0,  0,  1,  0,  0),( 0,  0, -1, -2,  2),( 0 ,-1,  0, -2,  2).\]  
%\end{remark}

This theorem inspires us to give the following definition. We investigate the relation with irreducible nef-partitions in Section~\ref{SectionNef}.

\begin{definition}\label{Irreducible}
Let $P$ be a nonempty Gorenstein polytope of index $r$. We call $P$ \textit{irreducible} if there is no proper face $F$ with $\text{codeg}\, F + \text{codeg}\, F^* = r $. 
\end{definition}

In particular, any reflexive polytope is irreducible, since it is Gorenstein of index $1$. In the situation of the above theorem, $F$ and $G^*$ do not need to be dual Gorenstein polytopes, as the following example shows.

\begin{example}\label{BatyBoriExample}
This example is based on \cite[Example 5.5]{BatyrevBorisovCompleteIntersections}. Consider the lattice $(\frac{1}{2},\frac{1}{2},\frac{1}{2},\frac{1}{2}) +  \mathbb{Z}^4$ in $\mathbb{R}^4$ and the isomorphic $2$-dimensional lattice polytopes
\begin{eqnarray*}
F &\text{ with vertices }& (1,0,0,0), (0,1,0,0), (-1,0,0,0), (0,-1,0,0),\\
G &\text{ with vertices }& (0,0,1,0), (0,0,0,1), (0,0,-1,0), (0,0,0,-1).
\end{eqnarray*}
One can check that $P=F*G$ is $5$-dimensional Gorenstein of index 2, with 
\[h_P^*(t) = 1+4t+22t^2+4t^3+t^4.\]
It is easy to see that both $F$ and $G$ are Gorenstein of index 1, with $h_F^*(t) = h_G^*(t) = 1 + 2t + t^2$. Note that $h_P^*(t)$ differs from $(1 + 2t + t^2)^2 = 1 + 4t + 6t^2 + 4t^3 +t^4$ as one would get for a $\mathbb{Z}$-join by considering the standard lattice $\mathbb{Z}^4$.

One can also check that $P$ is self-dual, i.e.\ $P$ and $P^{\times}$ are isomorphic lattice polytopes. Moreover, the face $G^*$ of $P^{\times}$ is isomorphic to $F$, whereas it is easy to see that $F$ is not isomorphic to its dual Gorenstein polytope. 

A somewhat longer computation shows that 
\[ \widetilde{S}(P,t) = t^4 + 2t^3 +t^2,\]
and $\widetilde{S}(F,t) = \widetilde{S}(G,t) = t^2+t$. The faces that contribute to the stringy $E$-function of $P$ are $\emptyset, F,G,P$ and hence
\[ E_{st}(P;u,v)  =  (uv)^2 - 2u^2v - 2uv^2 +4uv +u^2+v^2 -2u -2v +1 ,\]
and $E_{st}(F;u,v)=  E_{st}(G;u,v) = uv -u-v+1$, so 
\[ E_{st}(P;u,v) = E_{st}(F;u,v)\, E_{st}(G;u,v)\]
in this example. 
\end{example}

We note that the stringy $E$-function behaves multiplicatively with respect to $\mathbb{Z}$-joins.

\begin{proposition}\label{stringyjoin}
If a Gorenstein polytope $P$ is a $\mathbb{Z}$-join of faces $F$ and $G$, then \[E_{st}(P;u,v) = E_{st}(F;u,v) \, E_{st}(G;u,v).\]
\end{proposition}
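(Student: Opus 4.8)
The plan is to reduce everything to the multiplicativity properties of the two ingredients of the stringy $E$-function, namely $\widetilde{S}(\cdot,t)$ and the face poset structure, both of which are known to split for a $\mathbb{Z}$-join. First I would recall from Remark~\ref{RemarkJoins}(4) that every face $H$ of $P$ is itself a $\mathbb{Z}$-join of a unique pair $(H_F, H_G)$ with $H_F \leq F$ and $H_G \leq G$, and $\dim H = \dim H_F + \dim H_G + 1$. Combined with Remark~\ref{RemarkJoins}(5), this gives $\widetilde{S}(H,t) = \widetilde{S}(H_F,t)\,\widetilde{S}(H_G,t)$. Next I would invoke Lemma~\ref{dual-join} together with Proposition~\ref{cayley-join}: since $P$ is in particular a $\mathbb{Z}$-join, $F^*$ and $G$ are dual Gorenstein polytopes, and likewise $G^*$ and $F$; moreover $P^\times$ is a $\mathbb{Z}$-join of $F^*$ and $G^*$, so by the same argument the dual face $H^*$ of $P^\times$ is the $\mathbb{Z}$-join of $(H_F)^*$ (a face of $F^*$, computed inside $F^*$) and $(H_G)^*$ (a face of $G^*$), with $\widetilde{S}(H^*,t) = \widetilde{S}((H_F)^*,t)\,\widetilde{S}((H_G)^*,t)$. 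The one point that needs care here is the compatibility of the several duality operations: the face of $P^\times$ dual to $H$ must genuinely decompose as the join of the face of $F^*$ dual to $H_F$ and the face of $G^*$ dual to $H_G$; this follows from tracking the description of the duality in the proof of Lemma~\ref{dual-join} (the face of $P$ that is the join of $H_F$ and $G$ has dual lying in $F^*$, etc.) together with the fact that for a $\mathbb{Z}$-join the cone $C_P$ splits as $C_F \oplus C_G$ and hence $C_P^\vee$ splits as $C_F^\vee \oplus C_G^\vee$.

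With these decompositions in hand, the computation is a bookkeeping exercise. Writing $d = \dim P$, $r_F = \operatorname{codeg} F$, $r_G = \operatorname{codeg} G$, so that $r = r_F + r_G$ by Theorem~\ref{equality} (or directly by Remark~\ref{RemarkJoins}(6) together with the fact that a $\mathbb{Z}$-join is a free join), I would substitute into Definition~\ref{stringyE}:
\[ E_{st}(P;u,v) = \frac{1}{(uv)^{r_F+r_G}} \sum_{\emptyset \leq H \leq P} (-u)^{\dim H + 1}\,\widetilde{S}(H,u^{-1}v)\,\widetilde{S}(H^*,uv). \]
Using the bijection $H \leftrightarrow (H_F, H_G)$, the exponent $\dim H + 1 = (\dim H_F + 1) + (\dim H_G + 1)$, and the product decompositions of both $\widetilde{S}(H,\cdot)$ and $\widetilde{S}(H^*,\cdot)$, the sum factors as
\[ \left( \frac{1}{(uv)^{r_F}} \sum_{\emptyset \leq H_F \leq F} (-u)^{\dim H_F + 1}\,\widetilde{S}(H_F,u^{-1}v)\,\widetilde{S}((H_F)^*,uv) \right)\left( \frac{1}{(uv)^{r_G}} \sum_{\emptyset \leq H_G \leq G} (-u)^{\dim H_G + 1}\,\widetilde{S}(H_G,u^{-1}v)\,\widetilde{S}((H_G)^*,uv) \right), \]
which is exactly $E_{st}(F;u,v)\,E_{st}(G;u,v)$.

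I expect the main obstacle to be not the arithmetic of the factorization but verifying precisely that the dual-face operation respects the join decomposition — i.e. that if $H$ is the $\mathbb{Z}$-join of $H_F \leq F$ and $H_G \leq G$, then $H^*$ (the face of $P^\times$ dual to $H$) is the $\mathbb{Z}$-join of the face of $F^*$ dual to $H_F$ (within the Gorenstein polytope $F^*$) and the face of $G^*$ dual to $H_G$ (within $G^*$), and moreover that $\widetilde{S}$ of this dual face is computed intrinsically in $F^*$ and $G^*$ so that the factored sums really are the defining sums for $E_{st}(F;u,v)$ and $E_{st}(G;u,v)$. This is where one uses the splitting $C_P = C_F \oplus C_G$, hence $C_P^\vee = C_F^\vee \oplus C_G^\vee$, so that taking orthogonal complements within $C_P$ of a face $C_{H_F} \oplus C_{H_G}$ gives $C_{H_F}^\vee \oplus C_{H_G}^\vee$ inside $C_P^\vee$; everything else is then formal. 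A secondary, purely notational subtlety is that for a face $H_F < F$ the polytope $(H_F)^*$ appearing here is the face of $F^\times$ dual to $H_F$, and one must note that $F^\times$ is, up to isomorphism, exactly the face $G^*$-side of $P^\times$ — but this is precisely the content of Proposition~\ref{cayley-join} in the $\mathbb{Z}$-join case.
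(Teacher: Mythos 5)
Your proposal is correct and follows essentially the same route as the paper's proof: decompose each face $H$ of $P$ as a $\mathbb{Z}$-join of faces $H_F \leq F$ and $H_G \leq G$, observe that $H^*$ decomposes correspondingly using the cross-duality $F^{\times} = G^*$ and $G^{\times} = F^*$ from Proposition~\ref{cayley-join}, and factor the defining sum via the multiplicativity of $\widetilde{S}$ and the additivity of the index. The only slip is the parenthetical placing $(H_F)^*$ inside $F^*$: as you yourself note at the end, $(H_F)^*$ is a face of $F^{\times}$, which is the $G^*$-factor of $P^{\times}$ (and symmetrically $(H_G)^*$ lies in $F^*$), exactly as in the paper.
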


\begin{proof}
We have that $P^{\times}$ is also a $\mathbb{Z}$-join of $F^*$ and $G^*$. For faces $F'$ of $F$ and $G'$ of $G$ we have a face $P'$ of $P$ that is a $\mathbb{Z}$-join of $F'$ and $G'$. Moreover, $(P')^*$ is a $\mathbb{Z}$-join of the face $(F')^*$ of the dual Gorenstein polytope $F^{\times} = G^*$ corresponding to $F'$ and the face $(G')^*$ of the dual Gorenstein polytope $G^{\times} = F^*$ corresponding to $G$. It suffices now to use the multiplicativity of the $\widetilde{S}$-polynomial with respect to $\mathbb{Z}$-joins. 
\end{proof}

\begin{example}\label{HardExample}
Surprisingly, in the situation of Theorem~\ref{equality} it can still happen that the stringy $E$-function does not behave multiplicatively. 
We start from Example~\ref{ExampleJoins} (2) and then we mimic the example of Batyrev and Borisov somewhat. Consider the lattice $M = (0,0,\frac{1}{2},0,0,\frac{1}{2}) + \mathbb{Z}^6$ in $\mathbb{R}^6$ and the isomorphic $3$-dimensional lattice polytopes 
\begin{eqnarray*}
&F\quad \text{with vertices }& a_1 = (0,0,1,0,0,0), a_2 = (1,0,1,0,0,0),\\
& &  a_3 = (0,0,0,0,0,0), a_4 = (-1,2,0,0,0,0), \\
&G\quad \text{with vertices }& b_1 = (0,0,0,0,0,1), b_2 = (0,0,0,1,0,1),\\
& & b_3 = (0,0,0,0,0,0), b_4 = (0,0,0,-1,2,0). 
\end{eqnarray*}
Let $P$ be the $7$-dimensional Cayley polytope associated to $F$ and $G$, lying in $\mathbb{R}^7$, considered with respect to $M\oplus \mathbb{Z}$.
One may check that
\[ h_F^*(t) = h_G^*(t) = 1+t^2,\] 
\[ \widetilde{S}(F,t) = \widetilde{S}(G,t) = t^2,\] 
\[ E_{st}(F;u,v) = E_{st}(G; u,v ) = 2.\]
One may check as well that 
\[ h_P^*(t) = 1 + 6t^2 + t^4,\quad h_{P^{\times}}^* = 1 + 4t + 22 t^2 + 4t^3 + t^4,\]
\[  \widetilde{S}(P,t) = \widetilde{S}(P^{\times},t) = t^4.\]
So $P$ is Gorenstein of index $4$ and Calabi-Yau dimension $0$.

We have $h_{F^*}^* (t) = 1+ 2t +t^2$, and hence $\text{codeg} \, F + \text{codeg}\, F^* = \text{codeg}\, P$. So $P^{\times}$ is also a Cayley join of $F^*$ and $G^*$. Note that the following sets of vertices determine six 3-dimensional faces of $P$ which all have $h^*$-polynomial equal to $1 + t^2$, and $\widetilde{S}$-polynomial equal to $t^2$:
\begin{eqnarray*}
& & \{a_1,a_2,a_3,a_4\} , \{ b_1,b_2,b_3,b_4\} ,  \{a_1,a_3,b_1,b_3\}, \\
& & \{a_1,a_3,b_2,b_4 \},\{a_2,a_4,b_1,b_3 \},\{a_2,a_4,b_2,b_4 \}.
\end{eqnarray*}
Moreover, their dual faces all have $h^*$-polynomial $1+2t  + t^2$ and $\widetilde{S}$-polynomial equal to $t^2$. These faces, together with $\emptyset$ and $P$, contribute to the stringy $E$-polynomial to give $E_{st}(P;u,v) = 8$.
\end{example}

\section{Gorenstein polytopes and nef-partitions}\label{SectionNef}

In this section we recall the relation between nef-partitions of a reflexive polytope and Gorenstein polytopes. We show that irreducible nef-partitions correspond to irreducible Gorenstein polytopes in Theorem~\ref{IrreducibleProposition}.

\begin{definition}\label{nefpartition}
Let $P$ be a reflexive polytope in $M_{\mathbb{R}}$. A Minkowski sum decomposition $P = P_1 + \cdots + P_r$ of $P$ in nonempty lattice polytopes $P_1,\ldots, P_r$ is called a \textit{nef-partition} if $0\in P_i$ for each $i$. Since $P$ is determined by $P_1,\ldots , P_r$, we will denote a nef-partition simply by the set $\Pi(P):=\{ P_1, \ldots , P_r\}$.
\end{definition}

We note that this is called a \textit{centered} nef-partition in \cite[Def.\ 3.1]{BatyrevNill}. Next we recall the definition of a special simplex. 

\begin{definition}
Let $P$ be a lattice polytope in $M_{\mathbb{R}}$. A simplex $S$ spanned by $r$ affinely independent lattice points in $P$ is called a \textit{special $(r-1)$-simplex} of $P$ if every facet of $P$ contains precisely $r-1$ vertices of $S$.
\end{definition}

Because the codegree of a simplex of dimension $k$ is at most $k+1$, we immediately have the following.

\begin{lemma}\label{LemmaSpecialSimplex}
If a lattice polytope $P$ has a special $k$-simplex, then $\textnormal{codeg}\, P \leq k+1$.
\end{lemma}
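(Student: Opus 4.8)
The plan is to exhibit directly a lattice point in the relative interior of $(k+1)P$; since, as recalled via Ehrhart reciprocity in Section~\ref{definitions}, $\textnormal{codeg}\, P$ is the least positive integer $j$ such that $jP$ contains a lattice point in its relative interior, this immediately yields $\textnormal{codeg}\, P \leq k+1$. Let $v_0,\ldots,v_k$ be the $k+1$ affinely independent lattice points of $P$ spanning the special $k$-simplex $S$, and set $m := v_0 + v_1 + \cdots + v_k \in M$. I claim $m$ lies in the relative interior of $(k+1)P$.

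First I would check that $m \in \mathrm{aff}((k+1)P) = (k+1)\cdot\mathrm{aff}(P)$: this is automatic since each $v_i \in P \subseteq \mathrm{aff}(P)$. Then it remains to verify that $m$ is strictly on the interior side of every facet-supporting hyperplane. Fix a facet $F$ of $P$ with primitive inner normal $n_F \in N$ and supporting hyperplane $H_{n_F,-a_F}$, so that $\langle n_F, x\rangle \geq -a_F$ for all $x \in P$ with $a_F \in \mathbb{Z}$, and the corresponding facet of $(k+1)P$ lies on $H_{n_F,-(k+1)a_F}$. By definition of a special $k$-simplex, exactly $k$ of the $v_i$ lie on $F$; relabel so that $v_1,\ldots,v_k \in F$ and $v_0 \notin F$. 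Then $\langle n_F, v_i\rangle = -a_F$ for $i = 1,\ldots,k$, while $v_0$ is a lattice point with $\langle n_F, v_0\rangle > -a_F$; since $n_F$ is primitive and $-a_F \in \mathbb{Z}$, the integral distance from $v_0$ to $H_{n_F,-a_F}$ is a positive integer, so $\langle n_F, v_0\rangle \geq -a_F + 1$. Summing gives $\langle n_F, m\rangle \geq -(k+1)a_F + 1 > -(k+1)a_F$, which is the desired strict inequality.

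There is no serious obstacle here; the proof is essentially this computation. The only points requiring a little care are the reduction to the \emph{relative} interior (so that $P$ need not be full-dimensional), which is handled by the observation $m \in \mathrm{aff}((k+1)P)$, and the elementary gap estimate $\langle n_F, v_0\rangle \geq -a_F + 1$, which uses that $v_0$ is a lattice point off the facet hyperplane together with the standard integrality conventions for $n_F$ and $a_F$. Combining the two facts shows $m$ is a lattice point in the relative interior of $(k+1)P$, hence $\textnormal{codeg}\, P \leq k+1$.
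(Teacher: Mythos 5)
Your proof is correct and is essentially the argument the paper has in mind: the paper's one-line justification (``the codegree of a $k$-simplex is at most $k+1$'') rests on exactly the witness you exhibit, namely the vertex sum $v_0+\cdots+v_k$, with the special-simplex condition guaranteeing that this point is strictly inside every facet-supporting hyperplane of $(k+1)P$. Your write-up just makes the computation explicit (the integrality estimate $\langle n_F,v_0\rangle\geq -a_F+1$ is harmless but not needed, since strict inequality already follows from $v_0\notin F$).
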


\begin{proposition}\label{NefPartitionGivesGorensteinCayley}
\textnormal{\cite[Prop.\ 3.6 and Cor.\ 3.7]{BatyrevNill}} Let $M$ be a lattice of rank $d$. Let $P$ be a reflexive polytope in $M_{\mathbb{R}}$ and let $\Pi(P) = \{P_1, \ldots , P_r\}$ be a nef-partition. Then the associated Cayley polytope $\widetilde{P} := P_1 * \cdots * P_r$ is a Gorenstein polytope of dimension $d+r-1$ and index $r$, and both $\widetilde{P}$ and $(\widetilde{P})^{\times}$ have a special $(r-1)$-simplex. 
\end{proposition}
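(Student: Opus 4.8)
The plan is to prove Proposition~\ref{NefPartitionGivesGorensteinCayley} by unpacking the Cayley cone associated to the nef-partition and identifying the relevant special simplices explicitly. First I would set up notation: with $\overline{M} = M \oplus \mathbb{Z}^r$ and $\overline{P_i} = P_i \times \{e_i\}$, the Cayley cone $C_{\widetilde{P}} = \mathbb{R}_{\geq 0}\overline{P_1} + \cdots + \mathbb{R}_{\geq 0}\overline{P_r}$ sits in $\overline{M}_{\mathbb{R}}$, and $\widetilde{P}$ is its support polytope with respect to the hyperplane $\{x_1 + \cdots + x_r = 1\}$; since each $P_i$ is $d$-dimensional (as $0 \in P_i$ and the $P_i$ Minkowski-sum to the full-dimensional $P$, each $P_i$ spans $M_{\mathbb{R}}$ — or at least their affine spans jointly do, which is what matters), $\dim \widetilde{P} = d + r - 1$. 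The dual cone $C_{\widetilde{P}}^{\vee}$ lives in $\overline{N} = N \oplus \mathbb{Z}^r$, and I would describe it as the set of $(y, a_1, \ldots, a_r)$ with $\langle y, p\rangle + a_i \geq 0$ for all $p \in P_i$; because $P$ is reflexive and the $P_i$ give a nef-partition, the point $n_{\widetilde{P}} = (0; 1, \ldots, 1)$ is the Gorenstein point, and one checks $C_{\widetilde{P}}^{\vee}$ is itself a Gorenstein cone of the same index $r$, so $\widetilde{P}$ is Gorenstein of index $r$ — this is essentially \cite[Prop.\ 3.6]{BatyrevNill} and I would cite it rather than redo it, since the statement already attributes the result there.

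The substantive point to verify is the existence of the special $(r-1)$-simplices. For $\widetilde{P}$ itself, I would take the simplex $S$ spanned by the $r$ lattice points $v_i := 0 \times \{e_i\} = (0; e_i) \in \overline{M}$, which lie in $\widetilde{P}$ since $0 \in P_i$. These are affinely independent (their last $r$ coordinates are the distinct standard vectors $e_i$, summing to $1$ in the ambient affine hyperplane). A facet of $\widetilde{P}$ corresponds to a facet of the Gorenstein cone $C_{\widetilde{P}}$, hence to a ray of $C_{\widetilde{P}}^{\vee}$, i.e.\ to a vertex $(y; a_1, \ldots, a_r)$ of $(\widetilde{P})^{\times}$; the facet it cuts out consists of those points $p$ of $\widetilde{P}$ with $\langle (y; \vec a), p\rangle = 0$. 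Evaluating on $v_i = (0; e_i)$ gives $a_i$, so $v_i$ lies on this facet iff $a_i = 0$. Now I claim exactly one of the $a_i$ is nonzero on each facet: the defining inequality of the corresponding ray of $C_{\widetilde{P}}^{\vee}$ is $\langle y, p\rangle + a_i \geq 0$ for $p \in P_i$ with equality somewhere, and because $0 \in P_i$ forces $a_i \geq 0$, while the supporting-hyperplane/reflexivity structure of the nef-partition (each facet of $P$ pairs with a single $P_i$ "nontrivially", by the definition of nef-partition via $P^{\times} = P_1^{\times} + \cdots + P_r^{\times}$ type decompositions) pins down that $a_j = 0$ for $j \neq i$ and $a_i = 1$. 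Hence each facet of $\widetilde{P}$ contains precisely $r-1$ of the vertices $v_1, \ldots, v_r$, so $S$ is a special $(r-1)$-simplex. I expect this bookkeeping — matching facets of $\widetilde{P}$ to a single index $i$ and showing the other $a_j$ vanish — to be the main obstacle, and the cleanest route is to invoke the standard description of the dual of a Cayley cone of a nef-partition (that $C_{\widetilde{P}}^{\vee}$ is the Cayley cone of the dual nef-partition of $P^{\times}$), after which the special simplex for $(\widetilde{P})^{\times}$ follows by the identical argument with the roles of $M$ and $N$ swapped.

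Finally I would package the conclusion: having the special $(r-1)$-simplex $S \subset \widetilde{P}$ already re-proves $\textnormal{codeg}\,\widetilde{P} \leq r$ via Lemma~\ref{LemmaSpecialSimplex}, and combined with the fact (from Gorenstein duality, Definition~\ref{reflexiveGorensteincone} and the proposition following it) that the index equals the codegree, one gets that $\widetilde{P}$ is Gorenstein of index exactly $r$ and dimension $d + r - 1$ as claimed. The symmetric statement for $(\widetilde{P})^{\times}$ uses that $C_{\widetilde{P}}^{\vee}$ is again the Cayley cone of a nef-partition, namely of $P^{\times}$ with respect to the dual decomposition, so the same construction produces its special $(r-1)$-simplex. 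Since the bulk of the cone computation is already recorded in \cite{BatyrevNill}, the proof I would write is short: cite Proposition 3.6 there for the Gorenstein/index/dimension claims, and supply the explicit simplex $S = \mathrm{conv}(v_1, \ldots, v_r)$ together with the facet-index matching to establish the special-simplex assertions.
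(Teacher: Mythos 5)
Your proposal is correct and takes essentially the same route as the paper, which in fact offers no proof at all here beyond the citation of \cite[Prop.\ 3.6 and Cor.\ 3.7]{BatyrevNill}: the special simplices you exhibit, $0\times e_i$ in $\widetilde{P}$ and $e_i^*$ in $(\widetilde{P})^{\times}$, are precisely the ones the paper records without proof in Remark~\ref{RemarkSpecialSimplices}, and your facet-by-facet verification (each ray generator $(y;a_1,\ldots,a_r)$ of $C_{\widetilde{P}}^{\vee}$ has exactly one $a_i$ nonzero, equal to $1$) is sound. One small correction: the duality you invoke is not ``$P^{\times}=P_1^{\times}+\cdots+P_r^{\times}$'' but rather $P^{\times}=\mathrm{conv}(\nabla_1,\ldots,\nabla_r)$ for the dual nef-partition $\{\nabla_i\}$ of a \emph{different} reflexive polytope $\nabla=\nabla_1+\cdots+\nabla_r$; the fact you actually need — that for a vertex $y$ of $P^{\times}$ the nonnegative integers $-\min_{p\in P_i}\langle y,p\rangle$ sum to $1$, so exactly one equals $1$ — is nonetheless correct.
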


\begin{remark}\label{RemarkSpecialSimplices}
The special $(r-1)$-simplices are constructed as follows. We consider $\widetilde{P}$ as the support polytope of the Cayley cone $C$ associated to $P_1,\ldots,P_r$ in $(M\oplus \mathbb{Z}^r)_{\mathbb{R}}$ and we denote the standard basis vectors of $\mathbb{Z}^r$ with $e_1,\ldots , e_r$. Then the points $0\times e_i$ form a special $(r-1)$-simplex in $\widetilde{P}$. After choice of the dual basis $e_1^*,\ldots , e_r^*$ to $e_1,\ldots , e_r$ we identify the dual of $\mathbb{Z}^r$ with $\mathbb{Z}^r$. Then $e_1^*,\ldots , e_r^*$ form the vertices of a special $(r-1)$-simplex in $(\widetilde{P})^{\times}$, considered as support polytope of $C^{\vee}$ in $N\oplus \mathbb{Z}^r$.

Note that we recover the polytopes $P_i\times \{e_i\}$ simply by
\[ P_i\times \{e_i\} = \{ p\in P\,|\, \langle e_i^* , p \rangle = 1\} = \{p\in P\,|\, \langle e_j^* , p \rangle = 0\text{ for } j\neq i\}.\]
\end{remark}

The converse of Proposition~\ref{NefPartitionGivesGorensteinCayley} can be formulated as follows.

\begin{proposition}\label{Converse}
\textnormal{\cite[Thm.\ 2.6 and Prop.\ 3.6]{BatyrevNill}} Let $M$ be a lattice of rank $d$ and let $P_1,\ldots,P_r$ be lattice polytopes in $M_{\mathbb{R}}$ such that $P:= P_1+\cdots + P_r$ has dimension $d$. Assume that the associated Cayley polytope $\widetilde{P}:= P_1* \cdots *P_r$ is Gorenstein of index $r$. Then $P$ has a unique interior lattice point $m$ and $P-m$ is reflexive. If in addition $\widetilde{P}$ has a special $(r-1)$-simplex with vertices $p_i\times e_i$ for $i= 1,\ldots, r$, then $p_1+\cdots +p_r = m$ and $\{P_1-p_1,\ldots,P_r-p_r\}$ is a nef-partition of $P-m$.
\end{proposition}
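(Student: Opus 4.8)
The plan is to work with the Cayley cone and to exploit the fact that \emph{slicing} a cone is dual to \emph{projecting} its dual cone. Write $\overline{M} = M\oplus\mathbb{Z}^r$, $\overline{N} = N\oplus\mathbb{Z}^r$, let $\overline{P_i} = P_i\times\{e_i\}$, and let $C = \mathbb{R}_{\geq 0}(P_1*\cdots*P_r) = \sum_i \mathbb{R}_{\geq 0}\overline{P_i}\subset\overline{M}_{\mathbb{R}}$ be the Cayley cone. Its support polytope is $C_{(1)} = \widetilde{P}$, which is Gorenstein of index $r$, so by the Proposition in Section~\ref{definitions} characterising reflexive Gorenstein cones, $C$ is a reflexive Gorenstein cone of index $r$, the slice $C_{(r)} = r\widetilde{P}$ is a reflexive polytope, and $m_{C^\vee}$ is its unique interior lattice point. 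I would first record three elementary facts: $n_C = e_1^*+\cdots+e_r^*$; the image of $C$ under the projection to $\mathbb{R}^r$ is the whole orthant $\mathbb{R}^r_{\geq 0}$, so any interior point of $C$ has all its last $r$ coordinates strictly positive; and $C^\vee = \{(y,w)\in\overline{N}_{\mathbb{R}} : w_i + \psi_{P_i}(y)\geq 0 \text{ for all }i\}$, where $\psi_Q(y) := \min_{q\in Q}\langle y,q\rangle$.

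Combining $\langle n_C, m_{C^\vee}\rangle = r$ with the positivity just noted, the last $r$ coordinates of $m_{C^\vee}$ are positive integers summing to $r$, hence all equal $1$; so $m_{C^\vee} = (m;1,\ldots,1)$ for a unique $m\in M$, and the support polytope of $C^\vee$ becomes $(\widetilde{P})^\times = \{(y,w)\in C^\vee : \langle m,y\rangle + w_1+\cdots+w_r = 1\}$. A direct computation shows $C\cap\{x_1=\cdots=x_r=1\} = (P_1+\cdots+P_r)\times\{(1,\ldots,1)\} = P\times\{(1,\ldots,1)\}$, so $m$ lies in this copy of $P$; and since $C_{(r)}$ is full-dimensional and this affine slice passes through its interior point $m_{C^\vee}$, the standard convexity argument (a point interior to a convex body lies in the relative interior of any affine slice through it, and segments from an interior point stay interior) shows that $m\in\textnormal{int}\,P$ and, conversely, that every interior lattice point of $P$ yields an interior lattice point of $C_{(r)}$ — so, $C_{(r)}$ being reflexive, $m$ is the unique interior lattice point of $P$. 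To conclude that $P-m$ is reflexive I would invoke the criterion from Section~\ref{definitions} that it suffices to show $(P-m)^\times$ is a lattice polytope. The key point is the identity $\rho\bigl((\widetilde{P})^\times\bigr) = (P-m)^\times$, where $\rho\colon\overline{N}_{\mathbb{R}}\to N_{\mathbb{R}}$ is the coordinate projection: indeed $y\in\rho((\widetilde{P})^\times)$ iff there is $w$ with $w_i\geq -\psi_{P_i}(y)$ for all $i$ and $\sum_i w_i = 1-\langle m,y\rangle$, which is solvable iff $\psi_P(y)=\sum_i\psi_{P_i}(y)\geq\langle m,y\rangle-1$ (support functions add under Minkowski sums), i.e.\ iff $y\in(P-m)^\times$. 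Since $(\widetilde{P})^\times$ is a lattice polytope and $\rho$ carries $\overline{N}$ onto $N$, its image $(P-m)^\times$ is a lattice polytope; hence $P-m$ is reflexive, and the first assertion is proved.

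For the second assertion, suppose $\widetilde{P}$ carries a special $(r-1)$-simplex with vertices $p_i\times e_i$, $p_i\in P_i$. I would first show that its barycenter $z=\tfrac1r\sum_i(p_i\times e_i)$ lies in $\textnormal{int}\,\widetilde{P}$: every facet $G$ of $\widetilde{P}$ omits exactly one of the $r$ vertices, say $p_{i_0}\times e_{i_0}$, so in the convex combination defining $z$ there is a summand of positive weight not lying on the supporting hyperplane of $G$, which forces $z$ strictly onto the interior side of $G$; as this holds for all facets, $z\in\textnormal{int}\,\widetilde{P}$. Therefore $rz = (p_1+\cdots+p_r;1,\ldots,1)$ is a lattice point in $\textnormal{int}(r\widetilde{P}) = \textnormal{int}\,C_{(r)}$, and since $C_{(r)}$ is reflexive with unique interior lattice point $m_{C^\vee}=(m;1,\ldots,1)$, we get $p_1+\cdots+p_r = m$. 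Finally each $P_i-p_i$ is a lattice polytope containing $0$, and $(P_1-p_1)+\cdots+(P_r-p_r) = P-m$, which is reflexive by the first part, so $\{P_1-p_1,\ldots,P_r-p_r\}$ is a nef-partition of $P-m$.

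The step I expect to be the main obstacle is establishing the identity $\rho((\widetilde{P})^\times) = (P-m)^\times$ and, hand in hand with it, the bookkeeping that identifies $m$ both as the $M$-coordinate of $m_{C^\vee}$ and as the unique interior lattice point of $P$; everything else is routine manipulation of cones, their slices, dual cones and Minkowski sums. One should also take care with the lattice normalisations — in particular, that after the usual identification of $\{x_1+\cdots+x_r=1\}\cap\mathbb{Z}^r$ with $\mathbb{Z}^{r-1}$ the Cayley cone really is the Gorenstein cone associated to $\widetilde{P}$ — so that the cited Proposition of Section~\ref{definitions} applies verbatim.
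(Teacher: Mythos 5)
Your argument is correct. Note, however, that the paper itself offers no proof of this proposition --- it is imported verbatim from Batyrev--Nill (\cite[Thm.\ 2.6 and Prop.\ 3.6]{BatyrevNill}) --- so there is no in-paper proof to compare against; what you have written is a self-contained reconstruction, and it follows the same route as the cited source: pass to the Cayley cone $C$, use the equivalence between ``$C_{(1)}$ Gorenstein of index $r$'' and ``$C$ reflexive of index $r$'', locate $m_{C^\vee}$ in the slice $\{x_1=\cdots=x_r=1\}\cong P$, and identify $(P-m)^\times$ with the projection of $(\widetilde{P})^\times$. The steps you flagged as delicate are in fact fine: the positivity of the last $r$ coordinates of $m_{C^\vee}$ forces them all to equal $1$ because they are integers summing to $\langle n_C,m_{C^\vee}\rangle=r$; the identity $\rho\bigl((\widetilde{P})^\times\bigr)=(P-m)^\times$ follows exactly as you say from additivity of support functions under Minkowski sums, and the image is a lattice polytope because the vertices of $(\widetilde{P})^\times$ lie in $\overline{N}$ and $\rho(\overline{N})=N$, so the reflexivity criterion applies; and the standard fact $\operatorname{relint}(C\cap A)=\operatorname{int}(C)\cap A$ (valid once the affine slice $A$ meets $\operatorname{int}C$, which it does at $m_{C^\vee}$) gives both directions of the correspondence between interior lattice points of $P$ and of $C_{(r)}$. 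The barycenter argument for the special simplex and the final verification of the nef-partition axioms are likewise complete. The only bookkeeping worth making explicit if this were written out in full is the one you already mention: the Cayley cone is the Gorenstein cone of $\widetilde{P}$ only after identifying $\overline{M}\cap\{x_1+\cdots+x_r=1\}$ with $M\oplus\mathbb{Z}^{r-1}$, which is the paper's stated convention in Definition~\ref{Cayley}.
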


\begin{definition} 
\cite[Def.\ 5.6]{BatyrevBorisovCompleteIntersections} Let $P$ be a reflexive polytope and let $\Pi(P)= \{P_1,\ldots , P_r\}$ be a nef-partition of $P$. Then $\Pi(P)$ is called \textit{irreducible} if there is no subset $\{i_1 , \ldots , i_s \} \subset \{ 1 , \ldots ,r \}$ such that $P_{i_1} + \cdots + P_{i_s}$ contains 0 in its relative interior.
\end{definition}

\begin{theorem}\label{IrreducibleProposition}
Let $\Pi(P)= \{ P_1, \ldots , P_r\}$ be a nef-partition of a reflexive polytope $P$ in $M_{\mathbb{R}}$. Then $\Pi(P)$ is irreducible if and only if the associated Cayley polytope $\widetilde{P} = P_1 * \cdots * P_r$ is an irreducible Gorenstein polytope.
\end{theorem}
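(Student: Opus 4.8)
The plan is to translate both notions of irreducibility into statements about faces of the Cayley polytope $\widetilde{P}$ and its dual, and then apply Theorem~\ref{equality}. First I would set up notation: write $\widetilde{P} = P_1 * \cdots * P_r$, realized as the support polytope of the Cayley cone $C$ in $(M\oplus\mathbb{Z}^r)_{\mathbb{R}}$, with the special $(r-1)$-simplex $S$ with vertices $0\times e_i$ inside $\widetilde P$, and the dual special simplex $S^\times$ with vertices $e_i^*$ inside $(\widetilde P)^\times$, as in Remark~\ref{RemarkSpecialSimplices}. By Definition~\ref{Irreducible} I need: $\widetilde P$ is reducible (i.e.\ not irreducible) iff there is a proper face $F$ with $\operatorname{codeg}F + \operatorname{codeg}F^* = r$, and by Theorem~\ref{equality} this is equivalent to a Cayley-join decomposition of $\widetilde P$ into $F$ and some face $G$, with the dual Cayley-join decomposition of $(\widetilde P)^\times$ into $F^*$ and $G^*$.

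The core of the argument is to show that such a Cayley-join decomposition of $\widetilde P$ corresponds exactly to a partition $\{1,\dots,r\} = I \sqcup J$ into two nonempty blocks such that $\sum_{i\in I}P_i$ contains $0$ in its relative interior (equivalently, one can check, such that both $\sum_{i\in I}P_i$ and $\sum_{j\in J}P_j$ do — this symmetry should fall out). In the forward direction, given a reducible nef-partition with $\sum_{i\in I}P_i \ni 0$ in the relative interior, I would consider the face $F$ of $\widetilde P$ cut out by the linear functional $\sum_{i\in J}e_i^*$ (where $J$ is the complementary index set): by Remark~\ref{RemarkSpecialSimplices}, $F$ is the Cayley polytope $*_{i\in I}P_i$, and $G := *_{j\in J}P_j$ is the opposite face. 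The condition that $\sum_{i\in I}P_i$ has $0$ in its relative interior translates, via Proposition~\ref{Converse} applied to the sub-collection $\{P_i\}_{i\in I}$, into the statement that $F$ is itself Gorenstein of index $|I|$ with a special $(|I|-1)$-simplex, which forces $\operatorname{codeg}F = |I|$ (using Lemma~\ref{LemmaSpecialSimplex} for the upper bound and reflexivity of the Minkowski sum for the lower bound); symmetrically $\operatorname{codeg}F^* = |J|$ via the dual special simplex, so $\operatorname{codeg}F + \operatorname{codeg}F^* = |I| + |J| = r$, and $\widetilde P$ is reducible. Conversely, given a proper face $F$ with $\operatorname{codeg}F + \operatorname{codeg}F^* = r$, Theorem~\ref{equality} gives a Cayley-join decomposition; I would argue that such a decomposition of $\widetilde P$ must be compatible with the Cayley structure, i.e.\ split the index set $\{1,\dots,r\}$ into two blocks $I, J$, with $F = *_{i\in I}P_i$ (after reindexing) — this uses that the special simplex vertices $0\times e_i$ must each lie in $F$ or in $G$ since they are vertices of $\widetilde P$, together with the additivity $\operatorname{codeg}\widetilde P = \operatorname{codeg}F + \operatorname{codeg}G$ from Theorem~\ref{equality} — and then Proposition~\ref{Converse} applied to $F$ shows $\sum_{i\in I}P_i$ has $0$ in its relative interior, so the nef-partition is reducible.

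The main obstacle I anticipate is the converse direction: showing that an arbitrary Cayley-join decomposition of $\widetilde P$ produced by Theorem~\ref{equality} is forced to respect the Cayley / nef-partition structure, rather than being some unrelated geometric splitting. The key leverage is that the vertices $0\times e_i$ of the special simplex are genuine vertices of $\widetilde P$, hence (by Remark~\ref{RemarkJoins}(4)) each must lie entirely in $F$ or in $G$; combined with the fact (from Remark~\ref{RemarkSpecialSimplices}) that the functionals $e_i^*$ separate the "slabs" $P_i\times\{e_i\}$, and with the equality case of Theorem~\ref{maintheorem} which pins down $F$ as the convex hull of a sub-collection of vertices, this should force $F$ to be exactly the Cayley polytope over the $P_i$ with $i$ in some subset $I$. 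Once that structural fact is in hand, everything reduces to the already-established dictionary between Gorenstein Cayley polytopes and nef-partitions (Propositions~\ref{NefPartitionGivesGorensteinCayley} and~\ref{Converse}) applied to sub-collections, and the proof closes.
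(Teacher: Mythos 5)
Your overall strategy is the paper's: use Theorem~\ref{equality} to convert irreducibility into the (non)existence of a Cayley-join decomposition, and use the special simplices of Remark~\ref{RemarkSpecialSimplices} to match such decompositions with partitions of the index set. Your converse direction is essentially the published argument, modulo one small repair: the points $0\times e_i$ need not be \emph{vertices} of $\widetilde P$ (since $0$ need not be a vertex of $P_i$), so the justification that each of them lands in $F$ or in $G$ should not go through Remark~\ref{RemarkJoins}~(4); rather, a Cayley join forces every \emph{lattice point} of the polytope into one of the two factors, by the separating hyperplane of Remark~\ref{separating}. The paper in fact distributes the dual points $e_i^*$ among $F^*$ and $G^*$, which is more efficient: $e_j^*\in F^*$ for $j\in J$ immediately gives $F\subseteq \{x \,|\, \langle e_j^*,x\rangle =0 \text{ for all } j\in J\}$, i.e.\ $F$ is contained in the face isomorphic to the Cayley polytope of the $P_i$, $i\in I$, after which disjointness and the dimension count force equality, and Proposition~\ref{Converse} finishes as you say.

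The genuine gap is in the forward direction, in your claim that $\textnormal{codeg}\,F=|I|$. You derive it ``via Proposition~\ref{Converse} applied to the sub-collection,'' but that proposition runs the wrong way: it \emph{assumes} that the Cayley polytope of the sub-collection is Gorenstein of the right index and deduces properties of the Minkowski sum, whereas here the Gorenstein property of $F$ is exactly what must be established. What is actually needed is that $\sum_{i\in I}P_i$ is not merely a polytope containing $0$ in its relative interior but is \emph{reflexive} (and that the dimensions of the two partial sums add up to $\dim P$); this is a nontrivial fact about nef-partitions that the paper imports from \cite[Prop.\ 6.11 and 6.13]{BatyrevNill} before invoking Proposition~\ref{NefPartitionGivesGorensteinCayley}, and even then the paper still needs Proposition~\ref{cayley-join} together with Remark~\ref{RemarkJoins}~(6) to control $\textnormal{codeg}\,F^*$. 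Your sketch neither proves nor cites this reflexivity, so as written the forward direction is circular. One way to close the gap without the external citation: prove only the upper bounds $\textnormal{codeg}\,F\le |I|$ (the hypothesis that $0$ lies in the relative interior of $\sum_{i\in I}P_i$ places the lattice point $\sum_{i\in I}0\times e_i$ in the relative interior of $|I|F$) and $\textnormal{codeg}\,F^*\le |J|$ (via the dual simplex points $e_j^*$, $j\in J$, whose sum is interior to $|J|F^*$), and then let Theorem~\ref{maintheorem} force $\textnormal{codeg}\,F+\textnormal{codeg}\,F^*=r$.
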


\begin{proof}
Assume first that $\Pi(P)$ is reducible. Then there exists a subset 
\[\{i_1,\ldots , i_s\} \subset \{1,\ldots , r\}\]
such that $P_1:= P_{i_1} + \cdots + P_{i_s}$ contains $0$ in its relative interior. Let $\{j_1,\ldots , j_t\}$ be the complement of $\{i_1,\ldots , i_s\}$ in $\{1,\ldots , r\}$. From \cite[Prop.\ 6.11 and 6.13]{BatyrevNill} it follows that $P_1$ and $P_2:=P_{j_1} + \cdots + P_{j_t}$ are both reflexive and that $\dim P_1 + \dim P_2 = \dim P$. From Proposition~\ref{NefPartitionGivesGorensteinCayley} we conclude that the associated Cayley polytopes $\widetilde{P_1}$ and $\widetilde{P_2}$ are Gorenstein polytopes of indices $s$, respectively $t$.

%Let $p$ be a boundary point of $P_1$. If $p$ does not lie on the boundary of $P$, then there is an integer vector $v\in \text{lin}(P_1) \cap M$ such %that $p+v \in P\setminus P_1$ and such that $P_1+v\subset P$. But then $v$ lies in the relative interior of $P_1+v$ and hence in the interior of $P$. %This contradicts the fact that $P$ is reflexive. It follows that $\partial P_1 \subset \partial P$, and similarly for $\partial P_2$. This implies %that $P_1$ and $P_2$ are reflexive with respect to the sublattices $\text{lin}(P_1) \cap M$, respectively $\text{lin}(P_2) \cap M$, and that $\dim P_1 %+ \dim P_2 = \dim P$. 

The Cayley polytope $\widetilde{P}$ clearly has a face $F_1$ that is isomorphic to $\widetilde{P_1}$ and a face $F_2$ that is isomorphic to $\widetilde{P_2}$, such that $\widetilde{P}$ is a Cayley join of $F_1$ and $F_2$. We also know that $\widetilde{P}$ is a Gorenstein polytope of index $r=s+t$. From Lemma~\ref{dual-join} it follows that $(\widetilde{P})^{\times}$ is a join of $F_1^*$ and $F_2^*$. It suffices to show that $\text{codeg}\, F_1^* = t$. Proposition~\ref{cayley-join} implies that $F_1^*$ is Gorenstein with dual $\pi_{F_1}(F_2)$. It is trivial that $\text{codeg}\, \pi_{F_1}(F_2) \leq \text{codeg}\, F_2$ and hence $\text{codeg}\, F_1^* \leq t$ since dual Gorenstein polytopes have the same codegree. Similarly $\text{codeg}\, F_2^* \leq s$. On the other hand, we have by Remark~\ref{RemarkJoins} (6) that
\[ r = \text{codeg}\, (\widetilde{P})^{\times} \leq \text{codeg}\, F_1^* + \text{codeg}\, F_2^* \leq t + s = r,\]
and hence equality holds everywhere. We conclude that $\widetilde{P}$ is reducible.

Conversely, assume that $\widetilde{P}$ is reducible. So there are nonempty faces $F$ and $G$ of $\widetilde{P}$ such that $\widetilde{P}$ is a Cayley join of $F$ and $G$ and such that $(\widetilde{P})^{\times}$ is a Cayley join of $F^*$ and $G^*$. In addition, all of these faces are Gorenstein. We consider $\widetilde{P}$ as support polytope of its Cayley cone and we use the notations of Remark~\ref{RemarkSpecialSimplices}. Since $(\widetilde{P})^{\times}$ is a Cayley join of $F^*$ and $G^*$, all vertices $e_1^*, \ldots ,e_r^*$ of the special simplex of $(\widetilde{P})^{\times}$ are contained in either $F^*$ or $G^*$. Without loss of generality we may assume that $G^*$ contains $e_1^*,\ldots,e_k^*$ with $0<k<r$ and that $F^*$ contains $e_{k+1}^*,\ldots , e_r^*$. It follows immediately that these vertices determine a special $(k-1)$-simplex of $G^*$ and a special $(r-k-1)$-simplex of $F^*$. By Theorem~\ref{equality} and Lemma~\ref{LemmaSpecialSimplex} we also have
\[ r = \text{codeg}\, (\widetilde{P})^{\times} = \text{codeg}\, F^* + \text{codeg}\, G^* \leq r- k + k = r, \]
and hence we have equality everywhere.

Since $F^*$ contains $e_{k+1}^*,\ldots,e_r^*$, we see that $F$ is contained in the face of $\widetilde{P}$ that is isomorphic to the Cayley polytope $P_1*\cdots * P_k$. Similarly $G$ is contained in the face isomorphic to $P_{k+1}*\cdots * P_r$. But these faces are disjoint and $\widetilde{P}$ is a Cayley join of $F$ and $G$, so it follows that these containments are equalities. The index of $F$ as a Gorenstein polytope equals $r - \text{codeg}\, F^* = r-(r-k)= k$. Proposition~\ref{Converse} tells us now that $P_1 + \cdots + P_k$ has a unique lattice point in its relative interior, which is necessarily $0$. Hence $\Pi(P)$ is reducible.
\end{proof}

Batyrev and Borisov have shown in \cite[Thm.\ 5.8]{BatyrevBorisovCompleteIntersections} that a nef-partition is reducible if and only if it splits as a direct sum, in the following sense.

\begin{definition}
\cite[Def.\ 5.1 and 5.2]{BatyrevBorisovCompleteIntersections} Let $P$ be a reflexive polytope in $M_{\mathbb{R}}$ and let $\Pi(P)=\{P_1,\ldots ,P_r\}$ be a nef-partition. We say that $\Pi(P)$ \textit{splits as a direct sum}
if there is a partition $I_1,\ldots,I_k$ of the index set $I=\{1,\ldots, r\}$ such that
for all $j$, $\{P_i\,|\, i\in I_j\}$ forms a nef-partition of a reflexive polytope $P_{I_j}$.
Here $P_{I_j}$ is considered as a lattice polytope with respect to $M_j := \text{lin}(P_{I_j}) \cap M$. We write
\[ \Pi(P) = \Pi(P_{I_1}) \oplus \cdots \oplus \Pi(P_{I_k}) . \]

By \cite[Prop.\ 6.11]{BatyrevNill} we automatically have that $\dim P_{I_1} + \cdots + \dim P_{I_k} = \dim P$. 
We say that $\Pi(P)$ \textit{splits as a direct sum over} $\mathbb{Z}$ if in addition 
\[ M = M_1 \oplus \cdots \oplus M_k .\]
\end{definition}

We obtain the following.

\begin{proposition}\label{ZSplit}
Let $P$ be a reflexive polytope in $M_{\mathbb{R}}$ and let $\Pi(P)= \{ P_1 , \ldots , P_r\}$ be a nef-partition. Then $\Pi(P)$ splits as a direct sum over $\mathbb{Z}$,
\[ \Pi(P) =   \Pi(P_{I_1}) \oplus \cdots \oplus \Pi(P_{I_k}),      \]
if and only if the associated Cayley polytope $\widetilde{P} = P_1* \cdots * P_r$ is a $\mathbb{Z}$-join of its faces corresponding to $\widetilde{P_{I_1}} ,\ldots , \widetilde{P_{I_k}}$. 
\end{proposition}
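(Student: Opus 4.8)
The plan is to run a two-directional argument, translating the combinatorial data of a $\mathbb{Z}$-split nef-partition into the lattice-theoretic data defining a $\mathbb{Z}$-join, using the dictionary already set up in Remark~\ref{RemarkSpecialSimplices} and Proposition~\ref{Converse}. Throughout I would work with $\widetilde{P}$ realised as the support polytope of the Cayley cone $C$ in $(M\oplus\mathbb{Z}^r)_{\mathbb{R}}$, so that the faces $F_j$ of $\widetilde{P}$ corresponding to $\widetilde{P_{I_j}}$ are cut out exactly by the equations $\langle e_i^*, \cdot\rangle = 0$ for $i\notin I_j$, as in Remark~\ref{RemarkSpecialSimplices}. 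Note that it suffices to treat the case $k=2$: a general partition is obtained by iterating, since a $\mathbb{Z}$-join of $F_1$ with the $\mathbb{Z}$-join of $F_2,\dots,F_k$ is a $\mathbb{Z}$-join of all of them (this reduction should be spelled out but is routine), and likewise $\Pi(P)=\Pi(P_{I_1})\oplus\Pi(P_{I_2}\cup\cdots)$ can be iterated.

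First, the ``only if'' direction. Assume $\Pi(P)=\Pi(P_{I_1})\oplus\Pi(P_{I_2})$ splits over $\mathbb{Z}$, so $M=M_1\oplus M_2$ with $M_j=\mathrm{lin}(P_{I_j})\cap M$. By Proposition~\ref{NefPartitionGivesGorensteinCayley} applied to each sub-nef-partition, $\widetilde{P_{I_1}}$ and $\widetilde{P_{I_2}}$ are Gorenstein, and they sit inside $\widetilde{P}$ as the faces $F_1$ (where $\langle e_i^*,\cdot\rangle=0$ for $i\notin I_1$) and $F_2$. The key point is to identify the lattices $M(F_1)$ and $M(F_2)$ from the definition of a $\mathbb{Z}$-join. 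Using the splitting $M=M_1\oplus M_2$ together with the splitting $\mathbb{Z}^r=\mathbb{Z}^{I_1}\oplus\mathbb{Z}^{I_2}$ of the extra coordinates, one sees directly that $\mathrm{lin}(F_j\times\{1\})\cap(M\oplus\mathbb{Z})$ is naturally $M_j\oplus\mathbb{Z}^{I_j}$ modulo the single relation identifying the two ``$1$''-coordinates; the content is exactly that the affine span of $P_{I_j}\times\{e_i\}_{i\in I_j}$ uses only lattice directions from $M_j$ and $\mathbb{Z}^{I_j}$. Then the natural map $M(F_1)\oplus M(F_2)\to M\oplus\mathbb{Z}$ is an isomorphism because $(M_1\oplus M_2)\oplus(\mathbb{Z}^{I_1}\oplus\mathbb{Z}^{I_2})$ maps onto $M\oplus\mathbb{Z}^r$ compatibly, and contracting the two ``$1$''-directions to one gives precisely $M\oplus\mathbb{Z}$; that $\widetilde{P}$ is the convex hull of $F_1$ and $F_2$ is immediate from the Cayley construction. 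Hence $\widetilde{P}$ is a $\mathbb{Z}$-join.

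For the converse, suppose $\widetilde{P}$ is a $\mathbb{Z}$-join of faces $F_1',F_2'$ corresponding to $\widetilde{P_{I_1}},\widetilde{P_{I_2}}$ for some partition $I_1\sqcup I_2$ of $\{1,\dots,r\}$. (That a $\mathbb{Z}$-join decomposition of $\widetilde{P}$ must be along faces of this form — i.e. that each vertex $e_i^*$ of the special simplex of $(\widetilde{P})^{\times}$, equivalently each block $P_i$, is sorted entirely into one side — follows from Remark~\ref{RemarkJoins}(4) exactly as in the proof of Theorem~\ref{IrreducibleProposition}, and I would reuse that argument.) By Proposition~\ref{cayley-join}, since a $\mathbb{Z}$-join is in particular a Cayley join, $F_1'$ and $F_2'$ are Gorenstein; moreover Remark~\ref{RemarkJoins}(3) gives $\mathrm{codeg}\,\widetilde{P}=\mathrm{codeg}\,F_1'+\mathrm{codeg}\,F_2'$, so the indices add up, i.e.\ $|I_1|+|I_2|=r$ splits the index too. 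By Proposition~\ref{Converse} applied to $\widetilde{P_{I_j}}$ with the special simplex inherited from $\widetilde{P}$, the polytope $P_{I_j}=\sum_{i\in I_j}P_i$ has a unique interior lattice point, which is $0$ since $0\in P_i$ for all $i$, and $\{P_i\mid i\in I_j\}$ is a nef-partition of the reflexive polytope $P_{I_j}$. It remains to check $M=M_1\oplus M_2$ over $\mathbb{Z}$, and this is where I would extract the ``$\mathbb{Z}$'' content of the hypothesis: the $\mathbb{Z}$-join condition $M(F_1')\oplus M(F_2')\xrightarrow{\sim}M\oplus\mathbb{Z}$ must be fed through the computation of $M(F_j')$ from the first paragraph to conclude that the projection of $M$ onto $\mathrm{lin}(P)=M_{\mathbb{R}}$ splits as $M_1\oplus M_2$ without index; equivalently one checks there is no finite-index obstruction by comparing determinants, or directly that a $\mathbb{Z}$-basis of $M(F_1')\oplus M(F_2')$ restricts to one of $M_1\oplus M_2$.

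The main obstacle I anticipate is the bookkeeping of the sublattices $M(F_j')$: unwinding their definition inside $M\oplus\mathbb{Z}$ when $F_j'$ itself lives in a Cayley-cone picture, and then relating the ``$\mathbb{Z}$-ness'' of the join of the $F_j'$ to the ``$\mathbb{Z}$-ness'' of the direct sum $M=M_1\oplus M_2$ — these are two a priori different lattices (one has an extra $\mathbb{Z}^r$ worth of Cayley coordinates, the other does not), and the equivalence hinges on the observation that the Cayley coordinates $\mathbb{Z}^r$ always split cleanly as $\mathbb{Z}^{I_1}\oplus\mathbb{Z}^{I_2}$ regardless, so the only genuine content lives in $M$. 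Once that identification is made carefully, both implications reduce to linear-algebra statements about splittings of the exact sequence $0\to M(F_2')\to M\oplus\mathbb{Z}\to M(F_1')\to 0$, which I would keep terse.
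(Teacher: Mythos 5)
The paper states this proposition without proof, so there is nothing to compare against line by line; judged on its own, your proposal identifies the right mechanism. The computation at its heart is correct and is really all that is needed: writing $\widetilde{P}$ as the support polytope of the Cayley cone in $M\oplus\mathbb{Z}^r$, the face $F_j$ corresponding to $\widetilde{P_{I_j}}$ has linear span $\mathrm{lin}(P_{I_j})\oplus\mathbb{R}^{I_j}$ (because $0\in P_i$ for all $i$), hence $M(F_j)=M_j\oplus\mathbb{Z}^{I_j}$ with $M_j=\mathrm{lin}(P_{I_j})\cap M$, so the $\mathbb{Z}$-join condition $\bigoplus_j M(F_j)\cong M\oplus\mathbb{Z}^r$ is literally equivalent to $M=M_1\oplus\cdots\oplus M_k$. (Your phrase ``modulo the single relation identifying the two $1$-coordinates'' is a red herring: no quotient is taken; the height coordinate of the $\mathbb{Z}$-join definition simply becomes $\sum_i x_i$ under the standard identification of $(M\oplus\mathbb{Z}^{r-1})\oplus\mathbb{Z}$ with $M\oplus\mathbb{Z}^r$.) This settles the ``only if'' direction and the lattice half of the ``if'' direction.

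There is, however, a genuine gap in your converse direction where you must show that each $\{P_i\mid i\in I_j\}$ is a nef-partition of a \emph{reflexive} polytope. You invoke Proposition~\ref{Converse} ``with the special simplex inherited from $\widetilde{P}$'' and conclude the interior point of $P_{I_j}$ is $0$ ``since $0\in P_i$ for all $i$.'' But $0\in P_{I_j}$ does not put $0$ in the relative interior, and the assertion that $\{0\times e_i\}_{i\in I_j}$ is a special simplex of the face $F_j$ is \emph{equivalent} to the statement that $0\times\mathbf{1}_{I_j}$ is a relative interior point of $|I_j|\,F_j$, i.e.\ to $0\in\mathrm{relint}(P_{I_j})$ — exactly what you are trying to prove. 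As written the step is circular. It is also easily repaired, and more cheaply than via Proposition~\ref{Converse}: once $M=M_1\oplus M_2$ is established from the lattice computation, the Minkowski sum $P=P_{I_1}+P_{I_2}$ of polytopes in complementary lattice summands is their product, so the unique interior lattice point $0$ of $P$ decomposes as $(q_1,q_2)$ with $q_j\in\mathrm{relint}(P_{I_j})$, forcing $q_j=0$, and reflexivity of $P$ with respect to $M=M_1\oplus M_2$ passes to each factor $P_{I_j}$ with respect to $M_j$. (Alternatively, use $\mathrm{relint}(P_{I_1})+\mathrm{relint}(P_{I_2})=\mathrm{int}(P)$ together with the independence of the spans $\mathrm{lin}(P_{I_j})$.) With that replacement, and a one-line verification that $\mathrm{codeg}\,F_j=|I_j|$ if you still want to route through Gorenstein indices, your argument is complete.
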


\begin{remark}
It was shown in \cite[Prop.\ 6.15]{BatyrevNill} that a nef-partition has a unique decomposition as a direct sum of irreducible nef-partitions. This means that the Cayley polytope associated to a nef-partition can be uniquely written as a Cayley join of irreducible Gorenstein polytopes. For general Gorenstein polytopes, this uniqueness does not hold. This follows for instance from Example~\ref{HardExample}: the six $3$-dimensional faces listed there are all irreducible Gorenstein polytopes and they form three pairs such that $P$ is a Cayley join of each of the pairs.
\end{remark}

\begin{remark}
Let $\Pi(P) = \{P_1,\ldots , P_r\}$ be a nef-partition which splits as a direct sum over $\mathbb{Z}$ as
\[ \Pi(P) = \{ P_1 \} \oplus \cdots \oplus \{P_r\}  . \]
Let $Q$ be the convex hull of $P_1,\ldots , P_r$. Here $Q$ is also called a \textit{free sum}. Then $h^*_Q(t) = h^*_{P_1}(t) \cdot \ldots \cdot h^*_{P_r}(t)$. This is a special case of the formula in \cite{Braun}. Here it can be seen directly as follows. By Proposition~\ref{ZSplit} and Remark~\ref{RemarkJoins} (3) the $h^*$-polynomial of $P_1 * \cdots * P_r$ is the product of the $h^*$-polynomials of $P_1,\ldots , P_r$. Now $Q$ is the image of $P_1 * \cdots * P_r$ under the projection along the special simplex $e_1 , \ldots , e_r$. By \cite[Thm.\ 2.16]{BatyrevNill} the $h^*$-polynomial does not change under this projection.
\end{remark}

\section{Ideas for further investigation}\label{Ideas}

The attentive reader will have noticed that, although we have proven that the stringy $E$-function of a Gorenstein polytope is a polynomial, we did not say anything about its expected degree, which is twice the Calabi-Yau dimension $n$ by Conjecture~\ref{conjectureBatyrevNill} (2).   

The following lemma gives a formula for the coefficient of $(uv)^n$. Note that it equals the constant coefficient by the Poincar\'e duality formula.

\begin{lemma}\label{LemmaLeading}
Let $P$ be a Gorenstein polytope of index $r$. The constant coefficient of $E_{st}(P;u,v)$ equals the cardinality of the set of faces $F$ of $P$ satisfying the following conditions:
\begin{itemize}
\item[(1)] $\deg F = \deg \widetilde{S}(F,t)$ and $\deg F^* = \deg \widetilde{S}(F^*,t)$,
\item[(2)] $\textnormal{codeg}\, F + \textnormal{codeg}\, F^* = r$,
\item[(3)] $\dim F + 1 = 2 (r - \textnormal{codeg}\, F^*)$.
\end{itemize}
In particular, only odd dimensional faces $F$ contribute to $E_{st}(P;0,0)$.
\end{lemma}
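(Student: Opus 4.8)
The plan is to extract the constant coefficient $E_{st}(P;0,0)$ directly from the defining sum
\[ E_{st}(P;u,v) = \frac{1}{(uv)^r} \sum_{\emptyset \leq F \leq P} (-u)^{\dim F + 1}\, \widetilde{S}(F,u^{-1}v)\, \widetilde{S}(F^{*},uv), \]
by understanding, face by face, the lowest-degree behaviour of each summand. First I would rewrite the $F$-th summand, as in the proof of Corollary~\ref{proofofconjecture}, by splitting the factor $(uv)^{-r}$ as $(uv)^{-(r-\operatorname{codeg} F^*)}\cdot(uv)^{-\operatorname{codeg} F^*}$ and distributing these against $\widetilde{S}(F,u^{-1}v)$ and $\widetilde{S}(F^{*},uv)$ respectively. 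The corollary shows each resulting summand is a genuine polynomial in $u,v$ (when nonzero). So $E_{st}(P;0,0)$ is the sum over faces $F$ of the constant terms of these polynomials, and it suffices to determine for which $F$ the summand has a nonzero constant term, and to check that in that case the contribution is exactly $1$.

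Next I would pin down the constant term of a single summand using the reciprocity law $\widetilde{S}(P,t)=t^{\dim P+1}\widetilde{S}(P,t^{-1})$ (Remark~\ref{remarkStilde}(3)) together with $\operatorname{subdeg}\widetilde{S}(P,t)=\dim P+1-\deg\widetilde{S}(P,t)$. Writing $\widetilde{S}(F,t)=\sum_m c_m t^m$ with $m$ ranging over $[\operatorname{subdeg}\widetilde{S}(F,t),\deg\widetilde{S}(F,t)]$, the term $(-u)^{\dim F+1}\widetilde{S}(F,u^{-1}v)$ contributes monomials $(-1)^{\dim F+1}c_m\, u^{\dim F+1-m}v^m$, which after dividing by $(uv)^{r-\operatorname{codeg}F^*}$ becomes $(-1)^{\dim F+1}c_m\,u^{\dim F+1-m-r+\operatorname{codeg}F^*}\,v^{m-r+\operatorname{codeg}F^*}$. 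For the $F^*$-factor, $\widetilde{S}(F^*,uv)/(uv)^{\operatorname{codeg}F^*}$ has lowest term $c'_{\operatorname{subdeg}}\,(uv)^{\operatorname{subdeg}\widetilde{S}(F^*,t)-\operatorname{codeg}F^*}$, and the exponent $\operatorname{subdeg}\widetilde{S}(F^*,t)-\operatorname{codeg}F^*$ is $\geq 0$ by Corollary~\ref{corollaryStilde}(2), vanishing exactly when $\operatorname{codeg}F^*=\operatorname{subdeg}\widetilde{S}(F^*,t)$, i.e.\ (using reciprocity for $F^*$) when $\deg F^*=\deg\widetilde{S}(F^*,t)$. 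To get a nonzero constant term from the product of the two polynomials one needs the lowest-degree monomial of each factor to be a pure constant. For the $F^*$-factor that forces $\deg F^*=\deg\widetilde{S}(F^*,t)$ (part of condition (1)) and it makes that factor's constant term equal $1$, since the lowest (= leading, up to the shift) coefficient of a $\widetilde{S}$-polynomial of a Gorenstein-type situation is $1$ — more precisely, reciprocity forces $\operatorname{subdeg}$-coefficient $=$ $\deg$-coefficient, and for $\widetilde{S}$ of the relevant polytopes the top coefficient is $1$; this is where I would invoke the $\widetilde{S}$-analogue of Hibi's theorem, or cite the computation behind it. For the $F$-factor, the lowest monomial in $u$ has $u$-exponent $\dim F+1-\deg\widetilde{S}(F,t)-r+\operatorname{codeg}F^*$; since $\dim F+1-\deg\widetilde{S}(F,t)=\operatorname{subdeg}\widetilde{S}(F,t)\geq\operatorname{codeg}F$ this exponent is $\geq\operatorname{codeg}F+\operatorname{codeg}F^*-r\geq 0$ by Theorem~\ref{maintheorem}, and it equals $0$ iff both inequalities are equalities, i.e.\ $\operatorname{codeg}F=\operatorname{subdeg}\widetilde{S}(F,t)$ (equivalently $\deg F=\deg\widetilde{S}(F,t)$) and $\operatorname{codeg}F+\operatorname{codeg}F^*=r$ — conditions (1) and (2). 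Under these, the lowest $u$-monomial of the $F$-factor is $(-1)^{\dim F+1}c\,v^{\deg\widetilde{S}(F,t)-r+\operatorname{codeg}F^*}$ with $c=1$, and for the whole summand's constant term we further need this $v$-exponent to vanish, i.e.\ $\deg\widetilde{S}(F,t)=r-\operatorname{codeg}F^*$; combined with $\deg\widetilde{S}(F,t)=\deg F=\dim F+1-\operatorname{codeg}F$ and condition (2) this rearranges to $\dim F+1=2(r-\operatorname{codeg}F^*)$, which is condition (3).

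Putting it together: a face $F$ contributes to $E_{st}(P;0,0)$ iff (1), (2), (3) hold, and in that case its contribution is $(-1)^{\dim F+1}\cdot 1\cdot 1$. By (3), $\dim F+1$ is even, so $(-1)^{\dim F+1}=1$, giving contribution exactly $1$ per qualifying face and hence $E_{st}(P;0,0)=\#\{F:\text{(1),(2),(3)}\}$; and since $\dim F$ is odd, only odd-dimensional faces contribute. I would also remark that the claim about $(uv)^n$ being the leading coefficient follows from the Poincaré duality statement in Proposition~\ref{stringyproperties}(2) once polynomiality (Corollary~\ref{proofofconjecture}) is known. The main obstacle I anticipate is the bookkeeping needed to justify that the \emph{entire} lowest-degree part of each factor (not just one monomial) is constant exactly under the stated hypotheses — in particular verifying that there is no cancellation and that the relevant extreme coefficient of $\widetilde{S}$ really is $1$ — so some care with the reciprocity laws and with Corollary~\ref{corollaryStilde} is the crux; the rest is the algebra of exponents already rehearsed in the proof of Corollary~\ref{proofofconjecture}.
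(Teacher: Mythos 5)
Your decomposition into $A_1(F)$ and $A_2(F)$, the exponent bookkeeping via the reciprocity law and Theorem~\ref{maintheorem}, and the derivation of conditions (1)--(3) from the requirement that both factors have nonzero constant term are exactly the paper's argument, and that part of your plan is correct and complete. The one genuine gap is the step you yourself flag as the crux: showing that a qualifying face contributes exactly $1$ rather than some larger positive integer. There is no ``$\widetilde{S}$-analogue of Hibi's theorem'' available to invoke; the extreme coefficient of $\widetilde{S}(F,t)$ is not $1$ for an arbitrary lattice polytope, and reciprocity only tells you the subdegree and top coefficients agree, not what they are. The missing ingredient is Theorem~\ref{equality}: condition (2) is precisely the equality case of Theorem~\ref{maintheorem}, and that theorem then guarantees that $F$ and $F^*$ are themselves Gorenstein polytopes. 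Only then does Hibi's theorem (Theorem~\ref{resultHibi}) give that the leading coefficient of $h^*_F(t)$ is $1$, and the coefficientwise bound $\widetilde{S}(F,t)\leq h^*_F(t)$ of Corollary~\ref{corollaryStilde}(1), combined with your observation that under (1)--(3) the polynomial $\widetilde{S}(F,t)$ is concentrated in the single degree $(\dim F+1)/2=\deg F$, pins that coefficient to be exactly $1$ (it is nonzero since $\widetilde{S}(F,t)\neq 0$); the same chain handles $\widetilde{S}(F^*,t)$ via reciprocity. Without the detour through Theorem~\ref{equality} you would only obtain that the constant coefficient is a sum of positive integers indexed by the qualifying faces, not their cardinality. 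Everything else in your proposal, including the sign analysis $(-1)^{\dim F+1}=1$ from condition (3) and the remark that the $(uv)^n$-coefficient equals the constant one by Poincar\'e duality, matches the paper.
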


\begin{proof}
In Corollary~\ref{proofofconjecture} we have written $E_{st}(P;u,v)$ as $\sum_{\emptyset \leq F\leq P} A_1(F) \, A_2(F)$, where
\begin{eqnarray*}
A_1(F) & = & \frac{(-u)^{\dim F + 1}\,\widetilde{S}(F,u^{-1}v)}{(uv)^{r - \text{codeg}\, F^{*}}} \\
A_2(F)& = &  \frac{\widetilde{S}(F^{*},uv)}{(uv)^{\text{codeg}\, F^{*}}}, 
\end{eqnarray*}
and both $A_1(F)$ and $A_2(F)$ are polynomials if $A_1(F)\neq 0$ and $A_2(F)\neq 0$. Let $F$ be a face for which both $A_1(F)$ and $A_2(F)$ have a nonzero constant coefficient. Since $A_1(F)$ is homogeneous of degree $\dim F + 1 - 2(r - \text{codeg}\, F^{*})$, we have condition (3) immediately, and hence $\dim F$ is odd. By Corollary~\ref{corollaryStilde} (2) we have that $\text{codeg}\, F^* \leq \text{subdeg}\, \widetilde{S}(F^*,t)$, but in order to have a nonzero constant coefficient in $A_2(F)$ we need equality, and hence $\deg F^* = \deg \widetilde{S}(F^*,t)$. 

Since $\widetilde{S}(F,t)\neq 0$, it follows from the reciprocity law for $\widetilde{S}$-polynomials that 
\[ \deg \widetilde{S}(F,t) \geq  \frac{\dim F + 1}{2}  \quad \text{and} \quad \text{subdeg}\, \widetilde{S}(F,t) \leq  \frac{\dim F + 1}{2}.\]
From condition (3), Theorem~\ref{maintheorem} and Corollary~\ref{corollaryStilde} (2) we get
\[ \frac{\dim F + 1}{2} = r - \text{codeg}\, F^* \leq \text{codeg}\, F \leq \text{subdeg}\, \widetilde{S}(F,t) \leq \frac{\dim F + 1}{2}. \]
It follows that we have equality everywhere and hence by the reciprocity law again, conditions (1) and (2) hold. Moreover, by Corollary~\ref{corollaryStilde} (1) we have $\widetilde{S}(F,t)\leq h_F^*(t)$, and Theorem~\ref{equality} tells us that $F$ is Gorenstein. Hence the leading coefficient of $h_F^*(t)$ is $1$. It follows from the above that $\widetilde{S}(F,t) = t^{(\dim F + 1) / 2}$ and hence $A_1(F) =1$. Similarly, $F^*$ is Gorenstein and hence the leading and constant coefficient of $\widetilde{S}(F^*,t)$ or $A_2(F)$ are $1$ as well. 
\end{proof}

In the case of an irreducible Gorenstein polytope, only $\emptyset$ and $P$ might contribute to the constant coefficient of the stringy $E$-function, so we have the following corollary.

\begin{corollary}
Let $P$ be an irreducible Gorenstein polytope. The constant coefficient of $E_{st}(P;u,v)$ belongs to $\{0,1,2\}$.
\end{corollary}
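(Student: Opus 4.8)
The plan is to invoke Lemma~\ref{LemmaLeading}, which characterises the constant coefficient of $E_{st}(P;u,v)$ as the number of faces $F$ of $P$ satisfying conditions (1)--(3) there. Since $P$ is irreducible, Definition~\ref{Irreducible} says that no \emph{proper} face $F$ satisfies $\textnormal{codeg}\, F + \textnormal{codeg}\, F^* = r$; but that is exactly condition (2) of the lemma. Hence the only faces of $P$ that can possibly contribute are the improper faces $F = \emptyset$ and $F = P$, and the constant coefficient is therefore at most $2$; being a cardinality, it lies in $\{0,1,2\}$.

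The only point that needs a line of verification is that the improper faces $\emptyset$ and $P$ are legitimately counted by Lemma~\ref{LemmaLeading} (i.e.\ that they are not silently excluded by conditions (1) and (3)), and that their combined contribution really is bounded by $2$ rather than some larger number through a counting subtlety. First I would note that $\emptyset$ and $P$ are dual to each other under $F \mapsto F^*$, that $\textnormal{codeg}\,\emptyset = 0$ and $\textnormal{codeg}\, P = r$, so $\textnormal{codeg}\,\emptyset + \textnormal{codeg}\, P = r$, giving condition (2) for both of them. For $F = \emptyset$ one has $\dim F + 1 = 0$ and $r - \textnormal{codeg}\, F^* = r - \textnormal{codeg}\, P = 0$, so condition (3) holds; for $F = P$, $\dim P + 1 = d+1$ and $r - \textnormal{codeg}\,\emptyset = r$, so (3) demands $d + 1 = 2r$, i.e.\ $\textnormal{CYdim}\, P = d+1-2r = 0$. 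Condition (1) is automatic since $\widetilde{S}(\emptyset,t)=1=h^*_\emptyset(t)$ and, for $P$, it reduces to $\deg P = \deg\widetilde{S}(P,t)$, which holds precisely when $E_{st}$ has nonzero constant term. None of this affects the conclusion: whether zero, one, or both improper faces satisfy all three conditions, the count is in $\{0,1,2\}$.

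I do not expect any real obstacle here; the corollary is an immediate consequence of the preceding lemma once one observes that condition (2) of Lemma~\ref{LemmaLeading} is the negation of the defining property of an irreducible Gorenstein polytope restricted to proper faces. The only thing to be careful about is the bookkeeping of improper faces, which the lemma's proof already handles implicitly (it applies Theorem~\ref{equality}, valid for all faces, and the formulas for $A_1, A_2$ make sense for $F = \emptyset$ and $F = P$). Thus the proof is essentially one sentence: by Lemma~\ref{LemmaLeading} and Definition~\ref{Irreducible}, only the at most two improper faces of $P$ can contribute to $E_{st}(P;0,0)$, whence the constant coefficient lies in $\{0,1,2\}$.
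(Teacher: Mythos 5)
Your proposal is correct and follows exactly the paper's argument: by Lemma~\ref{LemmaLeading}, condition (2) is precisely the property forbidden for proper faces of an irreducible Gorenstein polytope, so only $\emptyset$ and $P$ can contribute, giving a constant coefficient in $\{0,1,2\}$. The extra verification you carry out for the improper faces is accurate but not needed for the bound.
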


In Lemma~\ref{LemmaLeading} the empty face looks like an obvious candidate to contribute to the constant coefficient. Therefore an affirmative answer to the following questions would prove Conjecture~\ref{conjectureBatyrevNill} (2).

\begin{question}
Let $P$ be a Gorenstein polytope of index $r$ and dimension $d$.
\begin{itemize}
\item[(1)] Assume that $E_{st}(P;u,v) \neq 0$. Is it true that $\widetilde{S}(P^{\times},t)\neq 0$\,?
\item[(2)] Assume that $\widetilde{S}(P,t)\neq 0$. Is it true that $\deg \widetilde{S}(P,t) = \deg P$\,?
\end{itemize}
\end{question}

In \cite[Rem.\ 4.11 and 4.22]{BatyrevNill} the question was raised whether the leading coefficient of the stringy $E$-function is always a power of 2. We believe that this is true. Our hope was that the constant coefficient would behave multiplicatively with respect to the kind of joins of Theorem~\ref{equality}, but Example~\ref{HardExample} shows that this fails. However, in this example one has
\[ E_{st}(P;u,v) = E_{st}(P^{\times};u,v ) = 8 ,\]
\[ E_{st}(F;u,v) = E_{st}(G;u,v ) = 2, \quad  E_{st}(F^*;u,v) = E_{st}(G^*;u,v ) = 4.\] 
This leads us to ask the following question.

\begin{question}\label{leading}
Let $P$ be a Gorenstein polytope of index $r$ and let $F$ be a face of $P$ with $\text{codeg}\, F + \text{codeg}\, F^* = r$. Is it true that
\[   E_{st}(P;0,0)\,  E_{st}(P^{\times};0,0)  = E_{st}(F;0,0)\, E_{st}(G;0,0)\, E_{st}(F^*;0,0)\, E_{st}(G^*;0,0)\,?   \]
\end{question}

One might even ask for such a multiplicative behaviour for the whole stringy $E$-function. 

Finally, Example~\ref{BatyBoriExample} inspires us to ask the following question.

\begin{question}
Let $P$ be a reflexive polytope and $\Pi(P)=\{ P_1 , \ldots , P_r\} $ be a nef partition that splits as a direct sum
\[ \Pi(P)  =  \Pi(P_{I_1}) \oplus \Pi(P_{I_2}),\]
where $\{1,\ldots , r\}$ is the disjoint union of $I_1$ and $I_2$. Is
\[  E_{st}(\widetilde{P};u,v) = E_{st}(\widetilde{P_{I_1}} ; u,v ) \, E_{st}(\widetilde{P_{I_2}};u,v), \]
or does such multiplicative behaviour at least hold for the constant coefficient\,?
\end{question}

\vspace{3mm}

\footnotesize

\textsc{Benjamin Nill, Department of Mathematics, University of Georgia, Athens, GA 30602, USA} 

\textsc{Email:} \texttt{bnill@math.uga.edu}\\

\textsc{Jan Schepers, Department of Mathematics, K.U.Leuven, Celestijnenlaan 200B, B-3001 Leuven, Belgium}

\textsc{Email:} \texttt{janschepers1@gmail.com}

\end{document}